\definecolor{mno}{rgb}{0.5,0.1,0.5}
\newcommand{\R}{\mathds R}
\newcommand{\Pp}{\mathds P}
\newcommand{\Ee}{\mathds E}
\newcommand{\N}{\mathds N}
\newcommand{\C}{\mathds C}
\newcommand{\I}{\mathds 1}
\newcommand{\supp}{\operatorname{supp}}
\newtheorem{theorem}{Theorem}[section]
\newtheorem{lemma}[theorem]{Lemma}
\newtheorem{proposition}[theorem]{Proposition}
\newtheorem{corollary}[theorem]{Corollary}
\theoremstyle{definition}
\newtheorem{remark}[theorem]{Remark}
\renewcommand{\theequation}{\thesection.\arabic{equation}}
\renewcommand{\le}{\leqslant}
\renewcommand{\ge}{\geqslant}
\renewcommand{\leq}{\leqslant}
\renewcommand{\geq}{\geqslant}
\renewcommand{\Re}{\ensuremath{\operatorname{Re}}}
\renewcommand{\Im}{\ensuremath{\operatorname{Im}}}
\newtheorem*{ack}{Acknowledgement}
\begin{document}
\allowdisplaybreaks

\title[Feller Processes]{\bfseries Some Theorems on Feller Processes: Transience, Local Times and Ultracontractivity}

\author{Ren\'{e} L.\ Schilling\qquad\qquad Jian Wang}
\thanks{\emph{R.\ Schilling:} TU Dresden, Institut f\"{u}r Mathematische Stochastik, 01062 Dresden, Germany. \texttt{rene.schilling@tu-dresden.de}}
\thanks{\emph{J.\ Wang:}
School of Mathematics and Computer Science, Fujian Normal
University, 350007, Fuzhou, P.R. China.
\texttt{jianwang@fjnu.edu.cn}}
\date{}

\maketitle

\begin{abstract}
We present sufficient conditions for the transience and the existence of local times of a Feller process, and the ultracontractivity of the associated Feller semigroup; these conditions are sharp for L\'{e}vy
processes. The proof uses a local symmetrization technique and a uniform upper bound for the characteristic function of a Feller process. As a byproduct, we obtain for stable-like processes (in the sense of R.\ Bass) on $\R^d$  with smooth variable index $\alpha(x)\in(0,2)$ a transience criterion in terms of the exponent $\alpha(x)$; if $d=1$ and $\inf_{x\in\R} \alpha(x)\in (1,2)$, then the stable-like process has local times.

\medskip\noindent
\textbf{Keywords:} Feller process, characteristic function,
symbol, (local) symmetrization, stable-like process, ultracontractivity, transience; local time.

\medskip\noindent
\textbf{MSC 2010:} 60J25; 60J75; 35S05.
\end{abstract}

\section{Background and Main Results}\label{section1}
A \emph{Feller process} $(X_t)_{t\ge0}$ with state space $\R^d$ is a strong
Markov process such that the associated operator semigroup $(T_t)_{t\ge0}$,
$$
    T_tu(x)=\Ee^x(u(X_t)),\qquad u\in C_\infty(\R^d),\; t\ge0,\; x\in\R^d,
$$
($C_\infty(\R^d)$ is the space of continuous functions
vanishing at infinity) enjoys the Feller property, i.e.\ it maps
$C_\infty(\R^d)$ into itself. A semigroup is said to be a
\emph{Feller semigroup}, if $(T_t)_{t\ge0}$ is a
one-parameter semigroup of contraction operators
$T_t:C_\infty(\R^d)\rightarrow C_\infty(\R^d)$ which is strongly
continuous: $\lim_{t\to0}\|T_tu-u\|_\infty=0$ for any $u\in C_\infty(\R^d)$, and has the sub-Markov
property: $0\le T_tu\le 1$ whenever $0\le u\le 1$. The infinitesimal
generator $(A,{D}(A))$ of the semigroup $(T_t)_{t\ge0}$ (or of the
process $(X_t)_{t\ge0}$) is given by the strong limit
$$
    Au:=\lim_{t\to0}\frac{T_tu-u}{t}
$$
on the set ${D}(A)\subset C_\infty(\R^d)$ of all $u\in C_\infty (\R^d)$ for
which the above limit exists with respect to the uniform norm. We will
call $(A,{D}(A))$ \emph{Feller generator} for short.

Let $C_c^\infty(\R^d)$ be the space of smooth functions with compact
support. Under the assumption that the test functions
$C_c^\infty(\R^d)$ are contained in $D(A)$, Ph.\ Courr\`{e}ge
\cite[Theorem 3.4]{Cou} proved that the generator $A$ restricted to
$C_c^\infty(\R^d)$ is a pseudo differential operator,
\begin{equation}\label{symbol}
    Au(x)
    =-p(x,D)u(x)
    :=-\int e^{i \langle x,\xi\rangle}\,p(x,\xi)\,\hat{u}(\xi)\,d\xi,\quad u\in C_c^\infty(\R^d),
\end{equation}
with \emph{symbol} $p:\R^d \times \R^d\to \C$; $\hat{u}$ denotes
the Fourier transform of $u$, i.e.\ $\hat{u}(x)=(2\pi)^{{-d}}\int
e^{-i\langle x,\xi\rangle}u(\xi)\,d\xi$. The symbol $p(x,\xi)$ is locally
bounded in $(x,\xi)$, measurable as a function of $x$, and for every
fixed $x\in\R^d$ it is a continuous negative definite function in
the co-variable. This is to say that it enjoys the following
L\'{e}vy-Khintchine representation,
\begin{equation}\label{sy}
    p(x,\xi)
    =c(x)-i\langle b(x),\xi\rangle+\frac{1}{2}\langle\xi,a(x)\xi\rangle
    +\int\limits_{\mathclap{z\neq 0}}\!\!\big(1-e^{i\langle z,\xi\rangle}+i\langle z,\xi\rangle\I_{\{|z|\le1\}}\big)\,\nu(x,dz),
\end{equation}
where $(c(x),b(x),a(x),\nu(x,dz))_{x\in\R^d}$ are the L\'{e}vy
characteristics:  $c(x)$ is a nonnegative measurable function,
$b(x):=(b_j(x))\in\R^d$ is a measurable function, $a(x):=(a_{jk}(x))\in \R^{d\times d}$ is a nonnegative definite
matrix-valued function, and $\nu(x,dz)$ is a nonnegative, $\sigma$-finite kernel on
$\R^d\times\mathscr{B}(\R^d\setminus\{0\})$ such that for every
$x\in\R^d$, $\int_{z\neq 0}(1\wedge |z|^2)\,\nu(x,dz)<+\infty$. For details and a comprehensive bibliography we refer to the monographs \cite{jac-book} by N.\ Jacob and the survey paper \cite{JS}.

\medskip

The purpose of this paper is to provide criteria for the ultracontractivity of Feller semigroups, the transience and the existence of local times of Feller processes. We will frequently make the following two assumptions on the symbol $p(x,\xi)$:
\begin{equation}\label{assumption}
    \|p(\cdot,\xi)\|_\infty\le c(1+|\xi|^2)
    \qquad\text{and}\qquad
    p(\cdot,0)\equiv0.
\end{equation}
The first condition means that the generator has only bounded `coefficients', see, e.g.\ \cite[Lemma 2.1]{S3} or \cite[Lemma 6.2]{Sch}; the second condition implies that the Feller process is conservative in the sense that the life time of the process is almost surely infinite, see \cite[Theorem
5.2]{S1}.

Recall that a Markov semigroup $(T_t)_{t\ge0}$ is \emph{ultracontractive}, if $\|T_t\|_{1\to \infty}<\infty$ for every $t>0$. A Markov process $(X_t)_{t\ge0}$ is \emph{transient}, if there exists a countable cover $\{A_j\}_{j\ge 1}$ of $\R^d$ such that $\Ee^x\big(\int_0^\infty \I_{A_j}(X_t)\,dt\big)<\infty$ for every $x\in\R^d$ and $j\ge1$. Let $(X_t)_{t\ge0}$ be a Markov process on $\R^d$ and $\mathscr F_t:=\sigma(X_s \,:\, s\leq t)$. If there exists an $(\mathscr F_t)_{t\geq 0}$-adapted nonnegative process $(L(\cdot,t))_{t\ge0}$ such that for any measurable bounded function $f\ge0$,
$$
    \int_0^t f(X_s)\,ds
    =\int_{\R^d} f(x)L(x,t)\,dx \qquad\text{almost surely},
$$
then $(L(\cdot,t))_{t\ge0}$ is called the \emph{local time} of the process.

\medskip

We can now state the main result of our paper.

\begin{theorem}\label{pro1}
    Let $(X_t)_{t\ge0}$ be a Feller process with the generator $(A,{D}(A))$ such that $C_c^\infty(\R^d)\subset D(A)$. Then $A|_{C_c^\infty(\R^d)}=-p(\cdot,D)$ is a pseudo differential operator with symbol $p(x,\xi)$. Assume that the symbol satisfies \eqref{assumption}. 
    
    \smallskip\noindent
    \textup{\bfseries (i)} If
    \begin{equation}\label{pro11}
        \lim_{|\xi|\to \infty}\frac{ \inf\limits_{z\in\R^d}\Re p(z,\xi)}{\log (1+|\xi|)}=\infty,
    \end{equation}
    then the corresponding Feller semigroup $(T_t)_{t\ge0}$ is ultracontractive.

    If $P(t,x,dy)$ is the transition function of $(X_t)_{t\ge0}$, then $P(t,x,dy)$ has a density function $p(t,x,y)$ with respect to Lebesgue measure, and for every $t>0$,
    $$
        \sup_{x,y\in\R^d}p(t,x,y)\le (4\pi)^{-d}\int \exp\biggl(-\frac{{t}}{16}\inf_{z\in\R^d}\Re p(z,\xi)\biggr)\,d\xi.
    $$
    Consequently, the Feller semigroup $(T_t)_{t\ge0}$ has the strong Feller property, i.e.\ for any $f\in B_b(\R^d)$ and $t>0$, $T_tf\in C_b(\R^d)$, where $C_b(\R^d)$ is the space of bounded continuous functions on $\R^d$.

    \smallskip\noindent
    \textup{\bfseries (ii)} If
    \begin{equation}\label{pro12}
        \int_{\{|\xi|\le r\}}\frac{d\xi}{\inf\limits_{z\in\R^d}\Re p(z,\xi)} <\infty\quad\text{for every\ \ } r>0,
    \end{equation}
    then the Feller process $(X_t)_{t\ge0}$ is transient.

    \smallskip\noindent
    \textup{\bfseries (iii)} If
    \begin{equation}\label{local}
        \int_{\R^d}\frac{d\xi}{1+\inf\limits_{z\in\R^d}\Re p(z,\xi)}  <\infty,
    \end{equation}
    then the Feller process $(X_t)_{t\ge0}$ has local times $(L(\cdot,t))_{t\ge0}$ on $L^2(dx \otimes d\Pp)$.
\end{theorem}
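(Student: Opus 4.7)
The plan is to derive all three parts from a single analytic input, a uniform upper bound for the characteristic function of the Feller process,
$$
    \sup_{x\in\Rd}\bigl|\Ee^x e^{i\langle\xi,X_t-x\rangle}\bigr|
    \le \exp\Bigl(-c_1 t\,\inf_{z\in\Rd}\Re p(z,c_2\xi)\Bigr),
    \qquad t\ge 0,\ \xi\in\Rd,
$$
for absolute constants $c_1,c_2>0$; this is the ``uniform upper bound for the characteristic function'' announced in the abstract and its establishment is the main obstacle of the whole scheme. To obtain it I would use local symmetrization: fix $x\in\Rd$, introduce an independent copy $\tilde X$ starting at $x$ so that $|\Ee^x e^{i\langle\xi,X_t-x\rangle}|^2$ is the characteristic function of the symmetric difference $X_t-\tilde X_t$, stop both copies on leaving a small ball about $x$, and use the pseudo differential representation \eqref{symbol} together with the sub-Markov property to compare the real exponent of the stopped, symmetrized process with $\Re p(x,\cdot)$; replacing this reference exponent by the $x$-independent worst case $\inf_z\Re p(z,\cdot)$ then produces the displayed bound. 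The standing assumption \eqref{assumption} ensures that the symbol is controlled by $1+|\xi|^2$ and that the process is conservative.

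Part (i) then follows by Fourier inversion. Under \eqref{pro11} the right-hand side is integrable in $\xi$ for every $t>0$, so
$$
    p(t,x,y)=(2\pi)^{-d}\int e^{-i\langle\xi,y-x\rangle}\,\Ee^x e^{i\langle\xi,X_t-x\rangle}\,d\xi
$$
defines a bounded transition density. Taking moduli inside the integral, applying the key bound, and substituting $\xi\to c_2\xi$ yields the displayed estimate with the stated constants $(4\pi)^{-d}$ and $t/16$, hence ultracontractivity $\|T_t\|_{1\to\infty}<\infty$. The strong Feller property then follows because $T_tf\in C_\infty(\Rd)$ for $f\in C_c(\Rd)$ by the Feller property, and for general bounded measurable $f$ the uniform $L^\infty$-bound on $p(t,x,\cdot)$ together with Scheff\'e's lemma (applied to the probability densities $p(t,x_n,\cdot)\to p(t,x,\cdot)$) transfers this continuity.

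For part (ii) I would pick a Schwartz function $g\ge 0$ with $\hat g\ge 0$, supported in $\{|\xi|\le r\}$, satisfying $g\ge\delta>0$ on some ball $B(0,R)$ (e.g.\ $g=|h|^2$ with $h$ Schwartz and $\hat h$ compactly supported). Then Fubini, the non-negativity of $\hat g$, and the key bound give
$$
    \Ee^x\int_0^\infty g(X_t-x)\,dt
    \le \int\hat g(\xi)\int_0^\infty e^{-c_1 t\inf_z\Re p(z,c_2\xi)}\,dt\,d\xi
    \le C\int_{|\xi|\le c_2 r}\frac{d\xi}{\inf_z\Re p(z,\xi)},
$$
which is finite by \eqref{pro12}. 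Hence $\sup_x\Ee^x\int_0^\infty\I_{B(x,R)}(X_t)\,dt<\infty$, and a short strong-Markov argument (after the hitting time of $B(z,R)$ the indicator $\I_{B(z,R)}(X_t)$ is bounded by $\I_{B(y,2R)}(X_t)$ for the entry point $y$) promotes this to $\Ee^x\int_0^\infty\I_{B(z,R)}(X_t)\,dt<\infty$ for every fixed $z$, so the cover $\{B(z_j,R)\}_j$ along a countable dense $\{z_j\}$ realizes transience in the stated sense.

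Part (iii) is an $L^2$-Parseval computation. The random occupation measure $\mu_t=\int_0^t\delta_{X_s}\,ds$ admits an $L^2(dy)$ density $L(\cdot,t)$ precisely when its Fourier transform $\hat\mu_t(\xi)=\int_0^t e^{i\langle\xi,X_s\rangle}\,ds$ lies in $L^2(d\xi)$; by the Markov property combined with the key bound,
$$
    \Ee^x|\hat\mu_t(\xi)|^2
    = 2\Re\int_0^t\int_0^s \Ee^x e^{i\langle\xi,X_s-X_u\rangle}\,du\,ds
    \le \frac{Ct}{\inf_z\Re p(z,c_2\xi)},
$$
which together with the trivial bound $|\hat\mu_t(\xi)|\le t$ (used near the origin, where the denominator vanishes) yields $\Ee^x\|L(\cdot,t)\|_{L^2(dy)}^2\le C(t)\int d\xi/(1+\inf_z\Re p(z,\xi))<\infty$ by \eqref{local}. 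Finally, the mollified occupation times $L_\varepsilon(y,t)=|B(0,\varepsilon)|^{-1}\int_0^t\I_{B(y,\varepsilon)}(X_s)\,ds$ are Cauchy in $L^2(dy\otimes d\Pp^x)$ by the same estimate, and their $L^2$-limit is the required $(\mathscr F_t)$-adapted local time.
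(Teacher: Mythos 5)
Your plan reproduces the paper's architecture exactly: establish the uniform bound $\sup_x|\lambda_t(x,\xi)|\le\exp\bigl(-\tfrac{t}{16}\inf_z\Re p(z,2\xi)\bigr)$ (the paper's Theorem \ref{chartth}, proved by local symmetrization combined with a small-time estimate and a Markov-property iteration), then obtain (i) by Fourier inversion of $\lambda_t(x,\cdot)$, (ii) by bounding $\Ee^x\int_0^\infty g(X_t-x)\,dt$ with a nonnegative Fej\'er-type test function whose Fourier transform has compact support, and (iii) by an $L^2$-Parseval estimate for the Fourier transform of the occupation measure. Your implementation details differ only cosmetically from the paper's (Scheff\'e's lemma instead of a citation for the strong Feller property; making explicit the strong-Markov localization step in (ii); and using the crude bound $|\hat\mu_t|\le t$ together with a minimum estimate, where the paper instead passes to the resolvent-weighted measure $\mu=\int_0^\infty e^{-t}\mu_t\,dt$ so that the $\xi$-integral closes in one step), so the substance of the argument is the same.
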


For a L\'{e}vy process the symbol $p(x,\xi)$ is just the exponent $\psi(\xi)$ of the characteristic function, cf.\ Section \ref{section2}. Therefore, \eqref{pro11} is the Hartman--Wintner condition for the existence of smooth density functions, see \cite{HW} or \cite{KS} for a recent study; \eqref{pro12} is the classic Chung-Fuchs criterion for the transience of the process, see \cite{CF,POO} or \cite[Section 37]{SA}; \eqref {local} is Hawkes' criterion for the existence of local times of the process, see \cite[Theorems 1 and 3]{Haw} or the earlier related result \cite[Theorem 4]{GK}. This shows that the criteria of Theorem \ref{pro1} are sharp for L\'{e}vy processes.

To derive Theorem \ref{pro1} we will need the following uniform upper bound for the characteristic function of a Feller process, which is interesting in its own right.

\begin{theorem}\label{chartth} Let $(X_t)_{t\ge0}$ be a Feller process with the generator $(A,{D}(A))$ such that $C_c^\infty(\R^d)\subset D(A)$. Then $A|_{C_c^\infty(\R^d)}=-p(\cdot,D)$ is a pseudo differential operator with symbol $p(x,\xi)$. Assume that the symbol satisfies \eqref{assumption}. Then for any $t\ge0$ and every $x,\xi\in\R^d$,
$$
    \Big|\Ee^x\big(e^{i\langle X_t-x,\xi\rangle}\big)\Big|
    \le \exp \biggl[- \frac{t}{16}\inf_{z\in\R^d} \Re p(z,2\xi)\biggr].
$$
\end{theorem}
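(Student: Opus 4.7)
By translation invariance I may assume $x=0$ and set $\phi(t,\xi):=\Ee^{0}(e^{i\langle X_{t},\xi\rangle})$. The plan is to work on the product space of two independent copies of the process in order to exploit the elementary real quantity $1-|\phi(t,\xi)|^{2}$, apply Dynkin's formula to a real-valued, bounded test function, and conclude via a Gr\"onwall argument.

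On an enlarged probability space let $(X'_{t})_{t\ge 0}$ be an independent copy of $(X_{t})_{t\ge 0}$, both starting at $0$. The joint process $Z_{t}:=(X_{t},X'_{t})$ is Feller on $\R^{2d}$ with $C_{c}^{\infty}(\R^{2d})\subset D(A_{Z})$ and symbol $P((y,y'),(\eta,\eta'))=p(y,\eta)+p(y',\eta')$, which still satisfies \eqref{assumption}. By the symmetry $(X_{t},X'_{t})\stackrel{d}{=}(X'_{t},X_{t})$,
\[
|\phi(t,\xi)|^{2}=\Ee\bigl(e^{i\langle X_{t}-X'_{t},\xi\rangle}\bigr)=\Ee\bigl(\cos\langle X_{t}-X'_{t},\xi\rangle\bigr),
\]
so it suffices to control $1-|\phi(t,\xi)|^{2}=\Ee\bigl(g(Z_{t})\bigr)$, where $g(y,y'):=1-\cos\langle y-y',\xi\rangle\in[0,2]$ is real and nonnegative.

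Next I apply Dynkin's formula to $g$. Since $g\notin C_{c}^{\infty}(\R^{2d})$, truncate with $\chi\in C_{c}^{\infty}(\R^{2d})$ equal to $1$ on $B(0,1)$ with $0\le\chi\le 1$, set $g_{R}(y,y'):=g(y,y')\,\chi((y,y')/R)\in C_{c}^{\infty}(\R^{2d})$, apply Dynkin to $g_{R}$, and send $R\to\infty$; the limit is justified by $\|P(\cdot,\zeta)\|_{\infty}\le C(1+|\zeta|^{2})$ from \eqref{assumption} (which controls $A_{Z}g_{R}$ uniformly in $R$), dominated convergence for the bounded $g$, and the conservativeness implied by $P(\cdot,0)\equiv 0$. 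A direct pseudo differential computation from \eqref{symbol} applied to $(y,y')\mapsto e^{\pm i\langle y-y',\xi\rangle}$ gives
\[
A_{Z}g(y,y')=\Re\Bigl[\bigl(p(y,\xi)+\overline{p(y',\xi)}\bigr)\,e^{i\langle y-y',\xi\rangle}\Bigr],
\]
and consequently $1-|\phi(t,\xi)|^{2}=\int_{0}^{t}\Ee\bigl(A_{Z}g(Z_{s})\bigr)\,ds$.

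The main analytic obstacle is the pointwise lower bound
\[
\Ee\bigl(A_{Z}g(Z_{s})\bigr)\ge\tfrac{1}{8}\,\inf_{z\in\R^{d}}\Re p(z,2\xi)\cdot|\phi(s,\xi)|^{2},
\]
from which Gr\"onwall's lemma immediately delivers $|\phi(t,\xi)|^{2}\le\exp\bigl(-\tfrac{t}{8}\inf_{z}\Re p(z,2\xi)\bigr)$, equivalent to the claimed bound (the constant $\tfrac{1}{16}=\tfrac12\cdot\tfrac18$ arises on taking the square root). To establish this estimate I plan to decompose
\[
A_{Z}g(y,y')=\bigl(\Re p(y,\xi)+\Re p(y',\xi)\bigr)\cos\theta-\bigl(\Im p(y,\xi)-\Im p(y',\xi)\bigr)\sin\theta,\qquad\theta:=\langle y-y',\xi\rangle,
\]
combine the L\'evy--Khintchine representation \eqref{sy} of $\Re p$ with the elementary trigonometric inequality $1-\cos\langle z,\xi\rangle\ge\tfrac14(1-\cos\langle z,2\xi\rangle)$---which both explains the doubling $\xi\mapsto 2\xi$ in the statement and contributes the loss of a factor $4$---and absorb the non-sign-definite cross term involving $\sin\theta$ by exploiting the symmetry of $Z_{s}$ together with a Cauchy--Schwarz bound against the dominant real-part contribution. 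Making this chain of trigonometric and integral estimates precise, with careful sign tracking, is the principal technical work of the proof.
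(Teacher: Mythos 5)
Your reduction to the nonnegative real quantity $1-|\phi(t,\xi)|^{2}=\Ee\bigl(1-\cos\langle X_t-X'_t,\xi\rangle\bigr)$ on a product space, and your formula for $A_Zg$, are both correct; but the route is quite different from the paper's, and the step you defer to the end is precisely where the proof breaks down. The inequality you plan to prove,
\[
\Ee\bigl(A_Zg(Z_s)\bigr)\;\ge\;\tfrac18\inf_{z}\Re p(z,2\xi)\,|\phi(s,\xi)|^{2}\qquad\text{for every }s\ge0,
\]
is a \emph{pointwise-in-time} differential inequality $-\tfrac{d}{ds}|\phi(s,\xi)|^{2}\ge\tfrac{1}{8}\inf_z\Re p(z,2\xi)\,|\phi(s,\xi)|^{2}$, which is strictly stronger than the theorem: the integrated exponential bound is perfectly compatible with $|\phi(\cdot,\xi)|^{2}$ being momentarily non-decreasing (the envelope bound does not force monotone decay). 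Nothing in your sketch explains why such a uniform-in-$s$ estimate should hold. Writing $\phi(s,\xi)=\rho e^{i\alpha}$ and conditioning on $X_s$, one gets $\Ee[q(X_s)\cos\theta_s]=\rho\,\Ee\bigl[q(X_s)\cos(\langle X_s,\xi\rangle-\alpha)\bigr]$ with $q=\Re p(\cdot,\xi)$; the factor $\cos(\langle X_s,\xi\rangle-\alpha)$ changes sign, so if $q$ is large where the cosine is negative and near its infimum where it is positive, the left side can be small or negative while the right side is $\tfrac14(\inf q)\rho>0$. Replacing $\Re p(\cdot,\xi)$ by its infimum, or by $\tfrac14\Re p(\cdot,2\xi)$ via the pointwise trigonometric inequality you quote, does not help because of this sign problem.

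The paper sidesteps exactly this difficulty in two ways that your proposal omits. First, the sign of $\cos\theta$ is only controlled for \emph{small} times: in the proof of Proposition~\ref{th2} the integral is localized to $\{|y-x|\le g_1(\xi,\varepsilon)\}$, where $\cos\langle y-x,\xi\rangle\geq 0$, and the complement is estimated via the exit-time bound of Proposition~\ref{appendix2}; this produces the bound only for $t\in(0,t_0(\xi,\varepsilon)]$. Second, the passage from small $t$ to arbitrary $t$ is not a Gr\"onwall argument but the Markov-property induction in Step~1 of the proof of Theorem~\ref{th23}, which crucially combines Cauchy--Schwarz with the identity $\cos^{2}\theta=\tfrac12(1+\cos2\theta)$ to halve the error at each step, and this is also where the doubling $\xi\mapsto 2\xi$ and the factor $\tfrac1{16}$ actually enter. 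Your proposal compresses both ingredients into a single pointwise estimate that has no reason to hold. There is a further unaddressed issue: you plan to ``absorb the cross term involving $\sin\theta$'' via Cauchy--Schwarz, but without any sector condition (and the general theorem assumes none) $|\Im p|$ is not controlled by $\Re p$; the paper handles this by symmetrizing the process (Step~2, Lemma~\ref{symmetric}) so that the symbol becomes real, a device your proposal does not use.
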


Note that the estimate from Theorem \ref{chartth} is both natural and trivial for a L\'evy process $(Y_t)_{t\geq 0}$:
$$
    \Big|\Ee^x\big(e^{i\langle Y_t-x,\xi\rangle}\big)\Big|
    = \Big|\Ee^0\big(e^{i\langle Y_t,\xi\rangle}\big)\Big|
    = \big|e^{-t\psi(\xi)}\big|
    = e^{-t \Re\psi(\xi)}.
$$

The remaining part of this paper is organized as follows. In Section \ref{section2} we will study the characteristic function of a Feller process. We first point out that, under some mild additional assumptions on a Feller process, the characteristic function is real if, and only if, the associated symbol is real. Then, we give the proof of Theorem \ref{chartth} by using the local symmetrization technique; this approach may well turn out to be useful for further studies of Feller processes. Section \ref{section3} is devoted to proving Theorem \ref{pro1}. Some examples, including stable-like processes, are presented here to illustrate our results. For the sake of completeness, a few necessary properties and estimates for a Feller process are proved in a simple and self-contained way in the appendix.

\section{Characteristic Functions of Feller Processes}\label{section2}

Before we study the characteristic functions of Feller processes, it is instructive to have a brief look at L\'{e}vy processes which are a particular subclass of Feller processes. Our standard
reference for L\'{e}vy processes is the monograph by K.\ Sato
\cite{SA}. A \emph{L\'{e}vy process} $(Y_t)_{t\ge0}$ is a stochastically
continuous random process with stationary and independent
increments. The characteristic function of a L\'{e}vy process has a
particularly simple structure,
$$
    \Ee^x\big(e^{i\langle Y_t-x,\xi\rangle}\big)
    =\Ee^0\big(e^{i\langle Y_t,\xi\rangle}\big)=e^{-t\psi(\xi)}, \quad x,\xi\in\R^d, t\ge0,
$$
where $\psi:\R^d\to \C$ is a continuous negative definite function, i.e.\ it is given by a L\'evy-Khintchine formula of the form \eqref{sy} with characteristics $(c,b,a,\nu(dz))$ which do not depend on $x$.
A short direct calculation shows that the infinitesimal generator of $Y_t$ is given by
$$
    Lu(x)
    =-\psi(D)u(x)
    :=-\int e^{i \langle x,\xi\rangle}\,\psi(\xi)\,\hat{u}(\xi)\,d\xi,
    \quad u\in C_c^\infty(\R^d).
$$
This means that a L\'{e}vy process is generated by a constant-coefficient pseudo differential operator. The symbol
is given by the characteristic exponent (i.e.\ the logarithm of the characteristic functions) of the L\'{e}vy
process.

This relation is no longer true for general Feller processes.
Since the Feller process $(X_t)_{t\ge0}$ is not spatially homogeneous, the
characteristic function of $X_t,t\ge0$, will now depend on the
starting point $x\in\R^d$, i.e.\ on $\Pp^x$. Therefore, we get a
$(d+1)$-parameter family of \emph{characteristic functions}:
\begin{equation}\label{char1}
    \lambda_t(x,\xi)
    :=e^{-i\langle \xi, x\rangle}T_t(e^{i\langle\xi, \cdot\rangle})(x)
    =\Ee^x\big(e^{i\langle X_t-x,\xi\rangle}\big);
\end{equation}
hence, for every $t\ge0$ and $x\in\R^d$, the function $\xi\mapsto \lambda_t(x,\xi)$ is positive definite. Note that \eqref{char1} is well defined, since the operator $T_t$ extends uniquely to a bounded operator on $B_b(\R^d)$ (the space of bounded measurable functions), cf.\ \cite[Section 3]{S1}. According to \cite[Theorem 1.1]{J1},
for any Schwartz function $u$, we have
\begin{equation}\label{char2}
    T_tu(x)
    =\int e^{i \langle x,\xi\rangle}\hat{u}(\xi)\lambda_t(x,\xi)\,d\xi,
\end{equation}
i.e.\ on the Schwartz space $\mathscr S(\R^d)$ the operator $T_t, t\ge0$, is a
pseudo differential operator with symbol $\lambda_t(x,\xi)$.

If the domain of the Feller generator $A$ is sufficiently rich---e.g.\ if it contains
the space $C_b^2(\R^d)$ of twice differentiable functions with bounded derivatives---,
we know from \cite[Theorem 1.2]{J1} (and \cite[Theorem 3.1]{S2} for the general case) that
\begin{equation}\label{char3}
    \frac{d}{dt}\lambda_t(x,\xi)\bigg|_{t=0}
    =-p(x,\xi), \qquad x,\xi\in\R^d.
\end{equation}
This allows us to interpret the symbol probabilistically as the derivative of the characteristic function of the process.
Since the symbol of $T_t$ is not $e^{-tq(x,\xi)}$, we can only expect that the pseudo differential operator $e^{-tq(x,D)}$ with symbol $e^{-tq(x,\xi)}$ is a reasonably good approximation. Under some mild additional assumptions on $p(x,\xi)$, one of us obtained in \cite[Lemma 2]{S4} the following pointwise estimate
\begin{equation}\label{char4}
    |\lambda_t(x,\xi)-e^{-tq(x,\xi)}|\le C(\xi,\rho)\, t^\rho
\end{equation}
for $t\ge0$, $\rho\in[0,1]$ and $x,\xi\in\R^d$. See also the earlier related paper \cite{Jss}.

\subsection{Characteristic Functions and Symbols}
Recall that $((X_t)_{t\ge0},(\Pp^x)_{x\in\R^d})$ is a \emph{solution to the martingale problem} for the operator $(-p(\cdot,D),C_c^\infty(\R^d))$, if $\Pp^x(X_0=x)=1$ for all $x\in\R^d$, and if for all $f\in C_c^\infty(\R^d)$ the process $(M_t^f,\mathscr F_t)_{t\geq 0}$,
$$
    M_t^f := f(X_t)-\int_0^t(-p(X_s,D))f(X_s)\,ds,
$$
is a local martingale under $\Pp^x$. Here $\mathscr{F}_t=\sigma(X_s\::\:s\le t)$ is the natural filtration of the process $(X_t)_{t\geq 0}$. The martingale problem for $(-p(\cdot,D), C_c^\infty(\R^d))$ is \emph{well posed}, if the finite
dimensional distributions for any two solutions with the same initial distribution coincide.

The following result points out the relations between characteristic functions and the symbol of Feller processes.

\begin{theorem}\label{th21th1}
    Let $(X_t)_{t\ge0}$ be a Feller process with the generator $(A,{D}(A))$ such that $C_c^\infty(\R^d)\subset D(A)$. Then $A|_{C_c^\infty(\R^d)}=-p(\cdot,D)$ is a pseudo differential operator with symbol $p(x,\xi)$. For any $x\in\R^d$ and $t\ge0$, let $\lambda_t(x,\xi)$ be the characteristic function of $(X_t)_{t\ge0}$ given by \eqref{char1}. Assume that the symbol $p(x,\xi)$ satisfies \eqref{assumption}. Then, we have the following statements:

 \smallskip\noindent
    \textup{\bfseries (i)}
    The assertion \eqref{char3} holds; that is, for any $x,\xi\in\R^d$, $$
    \frac{d}{dt}\lambda_t(x,\xi)\bigg|_{t=0}
    =-p(x,\xi).$$

 \smallskip\noindent
    \textup{\bfseries (ii)} If the characteristic function $\lambda_t(x,\xi)$ is real for all $x,\xi\in\R^d$ and $t\ge0$, then the symbol $p(x,\xi)$ is also real.

 \smallskip\noindent
    \textup{\bfseries (iii)} Suppose that the martingale problem for $(-p(\cdot,D), C_c^\infty(\R^d))$ is well posed. If the symbol $p(x,\xi)$ is real, then the characteristic function $\lambda_t(x,\xi)$ is real for all $x,\xi\in\R^d$ and $t\ge0$.
\end{theorem}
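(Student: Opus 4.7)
I would prove (i), (ii), (iii) in order.

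For (i), the function $y\mapsto e^{i\langle\xi,y\rangle}$ is not in $C_\infty(\R^d)$ and so not, a priori, in $D(A)$; the remedy is truncation. Fix $\chi\in C_c^\infty(\R^d)$ with $\chi\equiv 1$ on $B(0,1)$, set $\chi_n(y):=\chi(y/n)$ and
\[
    f_n(y):=\chi_n(y)\,e^{i\langle\xi,y\rangle}\in C_c^\infty(\R^d)\subset D(A).
\]
By Dynkin's formula $T_tf_n(x)-f_n(x)=\int_0^tT_s(Af_n)(x)\,ds$, so $t^{-1}(T_tf_n(x)-f_n(x))\to Af_n(x)=-p(x,D)f_n(x)$ as $t\downarrow 0$. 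Expanding $-p(x,D)f_n(x)$ via the L\'evy-Khintchine representation \eqref{sy}, for $n$ so large that $\chi_n\equiv 1$ on $B(x,2)$ the local ($C^2$) terms already match those of the formal operator applied to $e^{i\langle\xi,\cdot\rangle}$, and in the jump integral the only discrepancy lives in $\{|z|>1\}$, where $\nu(x,\cdot)$ has finite mass; dominated convergence then yields $Af_n(x)\to -p(x,\xi)\,e^{i\langle\xi,x\rangle}$. A mild uniform bound coming from \eqref{assumption} justifies the interchange of the limits $n\to\infty$ and $t\downarrow 0$, and multiplying by $e^{-i\langle\xi,x\rangle}$ delivers \eqref{char3}.

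Part (ii) is then immediate: if $\lambda_t(x,\xi)\in\R$ for every $t\ge0$, then since $\lambda_0(x,\xi)=1$ we have $(\lambda_t(x,\xi)-1)/t\in\R$ for every $t>0$, and (i) forces the limit $-p(x,\xi)$ to be real.

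For (iii), the natural route is reflection combined with uniqueness of the martingale problem. Set $\tilde X_t:=2x-X_t$ under $\Pp^x$, so that $\tilde X_0=x$. If I can show that $(\tilde X_t)_{t\ge0}$ is again a solution of the martingale problem for $(-p(\cdot,D),C_c^\infty(\R^d))$ under $\Pp^x$, the well-posedness hypothesis identifies $\tilde X$ and $X$ in distribution, and
\[
    \lambda_t(x,\xi)=\Ee^x\!\bigl(e^{i\langle\xi,X_t-x\rangle}\bigr)=\Ee^x\!\bigl(e^{i\langle\xi,\tilde X_t-x\rangle}\bigr)=\Ee^x\!\bigl(e^{-i\langle\xi,X_t-x\rangle}\bigr)=\overline{\lambda_t(x,\xi)},
\]
so $\lambda_t(x,\xi)\in\R$. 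The verification proceeds by a pseudo-differential computation: for $f\in C_c^\infty(\R^d)$, put $g(y):=f(2x-y)\in C_c^\infty(\R^d)$. Then $f(\tilde X_t)=g(X_t)$, and the known local martingale $M^g_t:=g(X_t)-\int_0^t(-p(\cdot,D)g)(X_s)\,ds$ has to be rewritten as $f(\tilde X_t)-\int_0^t(-p(\cdot,D)f)(\tilde X_s)\,ds$. Expanding both expressions using \eqref{sy} and invoking that the reality of $p(x,\xi)$ forces $b(x)=0$ and $\nu(x,\cdot)$ to be symmetric in $z$, the substitution $z\mapsto-z$ in the jump integral pairs $g(\cdot+z)=f(2x-\cdot-z)$ with $f(2x-\cdot+z)$, the drift term disappears since $b\equiv 0$, and the compensator term reorganises accordingly. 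This pseudo-differential bookkeeping on the reflected function is, I expect, the main obstacle: reality of the symbol is only an \emph{infinitesimal} symmetry in the co-variable $\xi$, so the well-posedness hypothesis has to do the heavy lifting of upgrading this into a global identity of laws under $\Pp^x$.
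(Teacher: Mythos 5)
Parts (i) and (ii) follow essentially the same route as the paper: establish a Dynkin-type formula for $e^{i\langle\xi,\cdot\rangle}$ (the paper isolates this in Proposition~\ref{appendix1}, proved by exactly the truncation you describe), then differentiate at $t=0$. Your ``mild uniform bound justifies the interchange of $n\to\infty$ and $t\downarrow0$'' is precisely the nontrivial content of that appendix proposition, so stating it as a remark hides the work, but the plan is sound. Part~(ii) is the same trivial observation as in the paper.

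Part (iii) contains a genuine gap. You propose to show that $\widetilde X_t:=2x-X_t$ solves the martingale problem by rewriting $M^g_t=g(X_t)-\int_0^t(-p(\cdot,D)g)(X_s)\,ds$ (with $g=f(2x-\cdot)$) as $f(\widetilde X_t)-\int_0^t(-p(\cdot,D)f)(\widetilde X_s)\,ds$. This identity is false. Carrying out the pseudo-differential/L\'evy--Khintchine computation carefully (with $b\equiv0$ and $\nu(x,\cdot)$ symmetric, as reality indeed forces), one finds for $y\in\R^d$
\[
-p(y,D)\bigl(f(2x-\cdot)\bigr)(y)
=\tfrac12\!\sum_{j,k} a_{jk}(y)\,\partial_{jk}f(2x-y)+\!\int_{z\ne0}\!\!\bigl(f(2x-y+z)-f(2x-y)-\langle\nabla f(2x-y),z\rangle\I_{\{|z|\le1\}}\bigr)\nu(y,dz),
\]
which at $y=X_s$ gives $(-p(X_s,D)f)(\widetilde X_s)$: the coefficients $a(\cdot)$, $\nu(\cdot,dz)$ are still evaluated at $X_s$, not at $\widetilde X_s=2x-X_s$. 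Thus the compensator you obtain is $\int_0^t(-p(X_s,D)f)(\widetilde X_s)\,ds$, which is not of the required form $\int_0^t(-p(\widetilde X_s,D)f)(\widetilde X_s)\,ds$ unless the coefficients satisfy $a(y)=a(2x-y)$, $\nu(y,\cdot)=\nu(2x-y,\cdot)$ for all $y$ --- a point symmetry of the coefficient functions about $x$ that reality of the symbol does not give. Your closing remark that well-posedness ``has to do the heavy lifting'' does not repair this: well-posedness is only invoked \emph{after} one knows that $\widetilde X$ is a solution of the martingale problem, so it cannot be used to produce that fact. The paper circumvents the difficulty by never attempting a pathwise rewriting for positive times; instead it computes the \emph{weak infinitesimal generator} of the process $\widetilde X$ at its starting point $x$ (where $\widetilde X_0=X_0=x$, so the coefficients automatically agree), obtaining $-p(x,D)$ via the Fourier/Dynkin computation of part (i) and reality $p(x,-\xi)=p(x,\xi)$, and then invokes abstract results (Ethier--Kurtz; Dynkin; Sato, Lemma~31.7) to upgrade the identification of the generator on $C_c^\infty$ into the statement that $\widetilde X$ solves the martingale problem. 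To make your part (iii) correct you would need to reproduce an argument of that kind; the direct compensator rewriting cannot work.
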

\begin{remark}
The statement, that the martingale problem for $(-p(\cdot,D), C_c^\infty(\R^d))$ is well posed, is equivalent to saying that the test functions $C_c^\infty(\R^d)$ are an operator core for the Feller operator $(A,D(A))$, i.e.\ $\overline{A|_{C_c^\infty(\R^d)}}=A$. See Proposition \ref{appendix4} in the appendix for the proof.
\end{remark}

We start with some analytic properties of a symbol $p(x,\xi)$ which satisfies \eqref{assumption}.

\begin{lemma}\label{th21th11}
    Let $(X_t)_{t\ge0}$ be a Feller process with the generator $(A,{D}(A))$ such that $C_c^\infty(\R^d)\subset D(A)$, i.e.\ $A|_{C_c^\infty(\R^d)}=-p(\cdot,D)$ is a pseudo differential operator with symbol $p(x,\xi)$. If the symbol $p(x,\xi)$ satisfies \eqref{assumption}, then the function $x\mapsto p(x,\xi)$ is continuous for every fixed $\xi\in\R^d$, and
\begin{equation}\label{th3333}
    \lim_{r\to0}\sup_{z\in\R^d}\sup_{|\xi|\le r}|p(z,\xi)|=0.
\end{equation}
\end{lemma}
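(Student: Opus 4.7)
The lemma has two parts: continuity of $x\mapsto p(x,\xi)$ for each $\xi$ and uniform smallness of $p$ as $\xi\to 0$. My strategy is to recover $p(x,\xi)$ from the generator $A$ by testing against localized plane waves, deduce continuity from the Feller property (which ensures $Au\in C_\infty$ for $u\in C_c^\infty$), and deduce the uniform smallness from the L\'evy--Khintchine form together with the uniform bounds on the characteristics forced by \eqref{assumption}.

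Fix a radially symmetric $\chi\in C_c^\infty(\R^d)$ with $\chi(0)=1$, and for $x\in\R^d$, $\epsilon>0$ set $u_{x,\epsilon}(y):=\chi(\epsilon(y-x))\,e^{i\langle y-x,\xi\rangle}\in C_c^\infty(\R^d)$. Using $\widehat{u_{x,\epsilon}}(\eta)=e^{-i\langle x,\eta\rangle}\epsilon^{-d}\hat\chi((\eta-\xi)/\epsilon)$, a short Fourier calculation yields
$$F_\epsilon(x)\,:=\,-Au_{x,\epsilon}(x)\,=\,\int p(x,\xi+\epsilon\zeta)\,\hat\chi(\zeta)\,d\zeta,$$
and by \eqref{assumption} together with the Schwartz decay of $\hat\chi$, dominated convergence gives $F_\epsilon(x)\to p(x,\xi)\chi(0)=p(x,\xi)$ pointwise as $\epsilon\downarrow 0$. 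Each $F_\epsilon$ is moreover continuous in $x$: the map $x\mapsto u_{x,\epsilon}$ is continuous from $\R^d$ into the Schwartz space $\mathscr S(\R^d)$ (all derivatives translate continuously within a locally uniform compact support), and $p(\cdot,D)\colon\mathscr S(\R^d)\to(C_\infty(\R^d),\|\cdot\|_\infty)$ is continuous as a direct consequence of \eqref{assumption} and the Fourier representation. Continuity of $p(\cdot,\xi)$ thus follows once pointwise convergence is upgraded to local uniform convergence $F_\epsilon\to p(\cdot,\xi)$; since $F_\epsilon-p(\cdot,\xi)=\int[p(\cdot,\xi+\epsilon\zeta)-p(\cdot,\xi)]\hat\chi(\zeta)\,d\zeta$, this reduces via the standard continuous negative definite function inequality $|\psi(\alpha)-\psi(\beta)|^2\le C\,|\psi(\alpha-\beta)|\,(|\psi(\alpha)|+|\psi(\beta)|)$ applied to $\psi=p(x,\cdot)$ to the uniform smallness claim \eqref{th3333}, so I treat both parts in tandem.

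For \eqref{th3333} I use the L\'evy--Khintchine representation \eqref{sy}. The conditions \eqref{assumption} and $p(x,0)\equiv 0$ force $c(x)\equiv 0$; averaging $\Re p(x,\xi)$ over $\{|\xi|\le 1\}$ and combining with the elementary lower bound $\int_{|\xi|\le 1}(1-\cos\langle w,\xi\rangle)\,d\xi\ge c_d\,(1\wedge|w|^2)$ yields the uniform characteristic bound
$$\sup_{x\in\R^d}\Big(|b(x)|+\|a(x)\|+\int(1\wedge|w|^2)\,\nu(x,dw)\Big)\,\le\,C.$$
Consequently the drift, Gaussian, and small-jump contributions to $|p(x,\xi)|$ are $O(|\xi|)+O(|\xi|^2)$ uniformly in $x$. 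The main obstacle is the large-jump remainder $\int_{|w|>1}|1-e^{i\langle w,\xi\rangle}|\,\nu(x,dw)$: pointwise smallness as $\xi\to 0$ is immediate, but uniform smallness demands the tail estimate $\sup_x\nu(x,\{|w|>R\})\to 0$ as $R\to\infty$. I would extract this uniform tightness of $\{\nu(x,\cdot)\}_x$ from the Feller assumption by applying $A$ to scaled cut-off bumps $\phi(\cdot/R)$ supported away from the origin and invoking $A\phi\in C_\infty$ for $\phi\in C_c^\infty$ together with the explicit Courr\`ege formula for $A$ on the support-complement of the bump; this forces the required uniform decay of $\sup_x\nu(x,\{|w|>R\})$ and, fed back into the continuity argument, completes both conclusions.
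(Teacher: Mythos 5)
Your opening computation is correct: with $u_{x,\epsilon}(y)=\chi(\epsilon(y-x))e^{i\langle y-x,\xi\rangle}$ one indeed has $F_\epsilon(x)=-Au_{x,\epsilon}(x)=\int p(x,\xi+\epsilon\zeta)\hat\chi(\zeta)\,d\zeta\to p(x,\xi)$ pointwise, and each $F_\epsilon$ is continuous. You also correctly recognise that passing from pointwise to locally uniform convergence needs the equicontinuity-at-$0$ statement \eqref{th3333} as input (via the cndf modulus-of-continuity inequality), so that logically \eqref{th3333} must be established \emph{first} and cannot genuinely be treated ``in tandem'' with continuity. This is different from the paper's proof, which simply verifies the hypotheses of \cite[Theorem 4.4]{S1} -- namely, that $-p(\cdot,D)$ maps $C_c^\infty(\R^d)$ into $C_\infty(\R^d)$ and that $x\mapsto p(x,0)\equiv 0$ is continuous -- and invokes that result.

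The genuine gap is in the crucial final step. Your Lévy--Khintchine decomposition correctly isolates the large-jump tail $\int_{|z|>1}|1-e^{i\langle z,\xi\rangle}|\,\nu(x,dz)$ as the only obstruction to \eqref{th3333}, and you correctly observe that uniform smallness requires the tightness $\sup_x\nu(x,\{|z|>R\})\to 0$ as $R\to\infty$; this does \emph{not} follow from the uniform bound $\sup_x\int(1\wedge|z|^2)\,\nu(x,dz)<\infty$. But the resolution you propose -- apply $A$ to scaled bump functions $\phi(\cdot/R)$ and ``invoke $A\phi\in C_\infty$'' -- is only a declaration, not an argument, and the natural implementation is circular. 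If one takes $\phi\in C_c^\infty$ with $\phi\equiv 1$ near $0$ and centres it at $x$, then $A\phi((\cdot-x)/R)(x)=-\int(1-\phi(z/R))\,\nu(x,dz)\le-\nu(x,\{|z|\ge 2R\})$, which via the Fourier side equals $-\int p(x,\eta/R)\hat\phi(\eta)\,d\eta$. Thus $\sup_x\nu(x,\{|z|\ge 2R\})\le\int\sup_x|p(x,\eta/R)|\,|\hat\phi(\eta)|\,d\eta$, and to send $R\to\infty$ one needs precisely $\sup_x|p(x,\eta/R)|\to 0$ -- the statement under proof. Using a bump supported away from the origin, as you suggest, also does not escape this: for a fixed annulus $\{R/2<|y|<2R\}$ the Feller decay $A\phi_R(x)\to 0$ as $|x|\to\infty$ controls $\nu(x,\cdot)$ only on $x$-dependent annuli, not uniformly on $\{|z|>R\}$. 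Until this tightness step is actually supplied (by some argument that genuinely exploits the Feller property rather than the symbol bound alone, as Schilling does in the cited Theorem 4.4), the proof is incomplete, and consequently so is the continuity part which is built on \eqref{th3333}.

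A minor remark: the modulus-of-continuity inequality you quote, $|\psi(\alpha)-\psi(\beta)|^2\le C\,|\psi(\alpha-\beta)|(|\psi(\alpha)|+|\psi(\beta)|)$, is not in the standard form; the usual Peetre/subadditivity-type estimate has an additive $1$, e.g.\ $|\psi(\xi+\eta)-\psi(\xi)|\le 2\sqrt{|\psi(\eta)|}\sqrt{1+|\psi(\xi)|}$, which is what one should use here together with \eqref{assumption}. This does not affect the overall structure, but should be corrected if the argument is written out.
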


\begin{proof}Since $C_c^\infty(\R^d)\subset D(A)$ and $A|_{C_c^\infty(\R^d)}=-p(\cdot,D)$, the operator $-p(\cdot,D)$ maps $C_c^\infty(\R^d)$ into $C_\infty(\R^d)$. By the assumption \eqref{assumption}, the function $x\mapsto p(x,0)=0$ is continuous. Therefore, the required assertions follow from (the proof of) \cite[Theorem 4.4]{S1}. \end{proof}

\begin{proof}[Proof of Theorem \ref{th21th1}]
\textup{\bfseries (i)} Under more restrictive conditions, the conclusion \eqref{char3} has been shown in \cite[Theorem 1.2]{J1} and \cite[Theorem 3.1]{S2}. The following self-contained proof avoids these technical restrictions.

Every Feller semigroup $(T_t)_{t\ge0}$ has a unique extension onto the space $B_b(\R^d)$ of bounded Borel measurable functions, cf.\ \cite[Section 3]{S1}. For notational simplicity, we use $(T_t)_{t\ge0}$ for this extension. According to \cite[Corollary 3.3 and Theorem 4.3]{S1} and Lemma \ref{th21th11}, $t\mapsto T_tu$ is continuous with respect to locally uniform convergence for all continuous and bounded functions $u\in C_b(\R^d)$.

Let $e_\xi(x)=e^{i\langle \xi,x\rangle}$ for $x,\xi\in\R^d$.
By Proposition \ref{appendix1} in the appendix, we know that for $t>0$ and $x,\xi\in\R^d$,
\begin{equation*}\label{proofeee}
    T_te_{\xi}(x)
    =e_{\xi}(x)+\int_0^t T_s A e_{\xi}(x) \,ds.
\end{equation*}
Note that, see e.g.\ \cite[Proof of Lemma 6.3, Page 607, Lines 14--15]{S3},
$$
    -p(x,\xi)=e_{-\xi}(x)Ae_{\xi}(x).
$$
Therefore,
\begin{equation*}\label{proof1111}
    \lambda_t(x,\xi)
    =e_{-\xi}(x)T_te_{\xi}(x)
    =1-e^{-i\langle \xi, x\rangle}\int_0^t T_s\Bigl(p(\cdot,\xi)e^{i\langle \xi, \cdot\rangle}\Bigr)(x)\, ds.
\end{equation*}
Since $ \lambda_0(x,\xi)=1$, we obtain that for any $x,\xi\in\R^d$,
\begin{equation*}\label{proof1111}\begin{aligned}
   \frac{d}{dt}\lambda_t(x,\xi)\bigg|_{t=0}
    &=\lim_{t\to 0} \frac{\lambda_t(x,\xi)-1}{t}\\
    &=-e^{-i\langle \xi, x\rangle}\lim_{t\to 0}\frac{\int_0^t T_s\bigl(p(\cdot,\xi)e^{i\langle \xi, \cdot\rangle}\bigr)(x)\, ds}{t}\\
    &=-e^{-i\langle \xi, x\rangle}p(x,\xi)e^{i\langle \xi, x\rangle} \\
    &=-p(x,\xi).
\end{aligned}\end{equation*}
In the third equality we have used the fact that for fixed $x,\xi\in\R^d$, the function $t\mapsto T_t\bigl(p(\cdot,\xi)e^{i\langle \xi, \cdot\rangle}\bigr)(x)$ is continuous, cf.\ the remark in the last paragraph. This proves (i).

\smallskip\noindent
\textup{\bfseries (ii)} This follows directly from (i).

\smallskip\noindent
\textup{\bfseries (iii)}
For every $t\ge0$ we define $\widetilde{X}_t=2X_0-X_t$. Clearly, $(\widetilde{X}_t)_{t\ge0}$ is also a strong Markov process with the same starting point as $({X}_t)_{t\ge0}$. Let $\widetilde{\Pp}^x$ be the probability of the process $(\widetilde{X}_t)_{t\ge0}$ with starting point $x\in\R^d$, and denote by  $(\widetilde{T}_t)_{t\ge0}$ the semigroup of $(\widetilde{X}_t)_{t\ge0}$. We claim that $(\widetilde{X}_t)_{t\ge0}$ enjoys the Feller property.

Let $P(t,x,dy)$ be the transition function of the process $(X_t)_{t\ge0}$. Then, for all $u\in C_\infty(\R^d)$ and for a fixed $x_0\in\R^d$, we find
\begin{align*}
    |&\widetilde{\Ee}^x(u(\widetilde{X}_t))-\widetilde{\Ee}^{x_0}(u(\widetilde{X}_t))|\\
    &=|\Ee^x(u(2x-X_t))-\Ee^{x_0}(u(2x_0-X_t))|\\
    &\le |\Ee^x(u(2x-X_t))-\Ee^{x}(u(2x_0-X_t))| + |\Ee^x(u(2x_0-X_t))-\Ee^{x_0}(u(2x_0-X_t))|\\
    &\le \int |u(2x-y)-u(2x_0-y)|\,P(t,x,dy) + |\Ee^x(u(2x_0-X_t))-\Ee^{x_0}(u(2x_0-X_t))|.
\end{align*}
Since $u$ is uniformly continuous, we find for every $\epsilon>0$ some $\delta:=\delta(\varepsilon)>0$ such that $|u(z_1)-u(z_2)|<\epsilon$ for all $|z_1-z_2|<\delta$. This and the Feller property of $X_t$ show that for all $|x-x_0|<\delta$,
\begin{align*}
    |\widetilde{\Ee}^x(u(\widetilde{X}_t))-\widetilde{\Ee}^{x_0}(u(\widetilde{X}_t))|
    &\leq \epsilon + |\Ee^x(u(2x_0-X_t))-\Ee^{x_0}(u(2x_0-X_t))|\\
    &\xrightarrow{\quad x\to x_0\quad}\epsilon\xrightarrow{\quad\epsilon\to 0\quad}0.
\end{align*}
On the other hand, let $\tau_{B(x,r)}$ be the first exit time of the process from the ball $B(x,r)$. According to Proposition \ref{appendix2} in the appendix, we know for all $r>0$ and $x\in\R^d$,
$$
    \Pp^x(|X_t-x|\ge r)
    \le \Pp^x(\tau_{B(x,r)}\le t)
    \le c_1\,t\,\sup_{z\in\R^d}\sup_{|\xi|\le 1/r}|p(z,\xi)|
$$
for some constant $c_1>0$.  By the assertion \eqref{th3333} in Lemma \ref{th21th11}, we can choose $\delta_1:=\delta_1(\varepsilon)$ such that
$$
    \Pp^x(|X_t-x|\ge \delta_1)\le \varepsilon/(2\|u\|_\infty).
$$
Since $u\in C_\infty(\R^d)$, we find $\delta_2:=\delta_2(\varepsilon)>0$ such that $\sup_{|z|\ge \delta_2}|u(z)|\le \varepsilon/2$. Therefore, for all $t>0$ and $x\in\R^d$ with $|x|\ge \delta_1+\delta_2$, we have
\begin{align*}
    |\widetilde{\Ee}^x(u(\widetilde{X}_t))|
    &=|\Ee^x(u(2x-X_t))|\\
    &\le \Ee^x\big(|u(2x-X_t)|\I_{\{|X_t-x|\le \delta_1\}}\big)+\Ee^x\big(|u(2x-X_t)|\I_{\{|X_t-x|\ge \delta_1\}}\big)\\
    &\le \sup_{|z|\ge |x|-\delta_1}|u(z)|+\|u\|_\infty \Pp^x(|X_t-x|\ge \delta_1)\\
    &\le \sup_{|z|\ge \delta_2}|u(z)|+\frac{\varepsilon}{2}\le \varepsilon,
\end{align*}
which proves the Feller property of $(\widetilde{T}_t)_{t\ge0}$.

Let $(\widetilde{A}, D(\widetilde{A}))$ be the generator of the Feller semigroup $(\widetilde{T}_t)_{t\ge0}$. We claim that $C_c^\infty(\R^d) \subset D(\widetilde{A})$ and
$$
    \widetilde{A}|_{C_c^\infty(\R^d)}
    =-p(\cdot,D)
    ={A}|_{C_c^\infty(\R^d)}.
$$
For this, we use the weak infinitesimal operator $(\widetilde{A}_w, D(\widetilde{A}_w))$ of the Feller process $(\widetilde X_t)_{t\geq 0}$, see \cite[Chapter I, Section 6]{Dy} for details on the weak infinitesimal operator of a Markov semigroup. According to \cite[Lemma 31.7, Page 209]{SA}, we have $(\widetilde{A},D(\widetilde{A}))=(\widetilde{A}_w, D(\widetilde{A}_w))$. Therefore, it suffices to verify that the test functions $C_c^\infty(\R^d)$ are in the weak domain $D(\widetilde{A}_w)$ and that $\widetilde{A}_w|_{C_c^\infty(\R^d)}=-p(\cdot,D)$. We have to show that for $u\in C_c^\infty(\R^d)$ and every $x\in\R^d$
$$
    \lim_{t\to0}\frac{\widetilde{T}_tu(x)-u(x)}{t}=-p(x,D)u(x).
$$
This can be seen from the following arguments. Using the Fourier transform, the Fubini theorem and the definition of $(\widetilde{X}_t)_{t\ge0}$, we get
\begin{align*}
    \lim_{t\to 0} &\frac{\widetilde{\Ee}^x\Bigl(u\big(\widetilde{X}_{_{t}}\big)-u(x)\Bigr)}{t}\\
    &=\lim_{t\to 0} \frac{1}{t}\widetilde{\Ee}^x\biggl(\int e^{i\langle\widetilde{X}_t,\xi\rangle}\,\hat{u}(\xi)\,d\xi-\int e^{i\langle x,\xi\rangle}\,\hat{u}(\xi)\,d\xi\biggr)\\
    &=\lim_{t\to 0} \frac{1}{t}\int\hat{u}(\xi)\, \widetilde{\Ee}^x\bigl(e^{i\langle\widetilde{X}_t,\xi\rangle}-e^{i\langle x,\xi\rangle}\bigr)\,d\xi\\
    &=\lim_{t\to 0} \frac{1}{t}\int\hat{u}(\xi)e^{i\langle 2x,\xi\rangle}\,{\Ee}^x\Bigl(e^{-i\langle{X}_t,\xi\rangle}-e^{-i\langle x,\xi\rangle}\Bigr)\,d\xi\\
    &=\lim_{t\to 0} \frac{1}{t}\int\hat{u}(\xi)e^{i\langle 2x,\xi\rangle}\,\Ee^x\biggl(\int_0^t Ae^{-i\langle X_s,\xi\rangle}\,ds\biggr)\,d\xi \\
    &=\lim_{t\to 0} \frac{1}{t}\int\hat{u}(\xi)e^{i\langle 2x,\xi\rangle}\, \Ee^x\biggl(\int_0^t e^{-i\langle X_s,\xi\rangle}\big(-p(X_s,-\xi)\big)\,ds \biggr)\,d\xi\\
    &=-\int e^{i\langle x,\xi\rangle}\,p(x,-\xi)\,\hat{u}(\xi)\,d\xi\\
    &=-\int e^{i\langle x,\xi\rangle}\,p(x,\xi)\,\hat{u}(\xi)\,d\xi\\
    &=-p(x,D)u(x).
\end{align*}
In this calculation we have (repeatedly) used that $C_c^\infty(\R^d)\subset{{D}}(A)$, $A|_{C_c^\infty(\R^d)}=-p(\cdot,D)$ and that the function $x\mapsto e_{\xi}(x)=e^{-i\langle x,\xi\rangle}$ belongs for every fixed $\xi\in\R^d$ to the extended domain of the Feller operator $\widetilde{{D}}(A)$, see Proposition \ref{appendix1} below. In the penultimate line we used that $p(x,\xi)$ is real, i.e.\ $p(x,\xi)=p(x,-\xi)$. Therefore, the weak infinitesimal operator of $(\widetilde{T}_t)_{t\ge0}$ on $C_c^\infty(\R^d)$ is just $-p(\cdot,D)$.

According to \cite[Chapter 4, Proposition 1.7]{EK} and \cite[Chapter I, (1.49), Page 40]{Dy}, both $((X_t)_{t\ge0},(\Pp^x)_{x\in\R^d})$ and $((\widetilde{X}_t)_{t\ge0},(\widetilde{\Pp}^x)_{x\in\R^d})$ are solutions to the martingale problem for the operator $(-p(\cdot,D),
C_c^\infty(\R^d))$. Since the martingale problem for $(-p(\cdot,D),C_c^\infty(\R^d))$ is well posed, $((\widetilde{X}_t)_{t\ge0},(\widetilde{\Pp}^x)_{x\in\R^d})$ and $((X_t)_{t\ge0},(\Pp^x)_{x\in\R^d})$ have the same finite-dimensional distributions. In particular, for any $t>0$ and $x,\xi\in\R^d$,
$$
    \Ee^x\big(e^{i\langle X_t-x,\xi\rangle}\big)
    =\widetilde{\Ee}^x\big(e^{i\langle \widetilde{X_t}-x,\xi\rangle}\big),
$$
which shows that $\lambda_t(x,\xi)=\lambda_t(x,-\xi)=\overline{\lambda_t(x,\xi)}$, i.e.\ the characteristic function $\lambda_t(x,\xi)$ is real.
\end{proof}

\subsection{Uniform Upper Bound for Characteristic Functions}\label{section22}
We begin with a uniform upper bound for characteristic functions for small $t\ll 1$.
\begin{proposition}\label{th2}
    Let $(X_t)_{t\ge0}$ be a Feller process with the generator $(A,{D}(A))$ such that $C_c^\infty(\R^d)\subset D(A)$. Then $A|_{C_c^\infty(\R^d)}=-p(\cdot,D)$ is a pseudo differential operator with symbol $p(x,\xi)$. Assume that the symbol satisfies \eqref{assumption} as well as the following sector condition: there exists some $c\in[0,1)$ such that for all $\xi\in\R^d$,
\begin{equation}\label{th22}
    \sup_{x\in\R^d}|\Im p(x,\xi)|\le c \inf_{x\in\R^d}\Re p(x,\xi).
\end{equation}
    Then, for any $\xi\in\R^d$ and $\varepsilon\in (0,1-c)$, there exists some $t_0:=t_0(\xi, \varepsilon)>0$ such that for all $t\in[0,t_0]$
\begin{equation}\label{th21}
    \sup_{x\in \R^d} |\lambda_t(x,\xi)|\le \exp \Bigl[-{(1-c-\varepsilon)\,t}\inf_{z\in\R^d}\Re p(z,\xi)\Bigr].
\end{equation}
\end{proposition}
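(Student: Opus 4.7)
The plan is to start from the integral identity
$$
    \lambda_t(x,\xi) = 1 - I_t(x,\xi),
    \qquad
    I_t(x,\xi) := \int_0^t \Ee^x\bigl(p(X_s,\xi)\,e^{i\langle\xi,X_s-x\rangle}\bigr)\,ds,
$$
which is established inside the proof of Theorem \ref{th21th1}(i), and to expand $|\lambda_t(x,\xi)|^2 = 1 - 2\Re I_t(x,\xi) + |I_t(x,\xi)|^2$. The term $|I_t(x,\xi)|^2 \le t^2 \|p(\cdot,\xi)\|_\infty^2$ is $O(t^2)$ uniformly in $x$ by \eqref{assumption}, so everything reduces to a uniform-in-$x$ lower bound on $\Re I_t(x,\xi)$ whose leading order is $(1-c)\,t\inf_z\Re p(z,\xi)$.

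For this lower bound I would split
$$
    \Re\bigl[p(X_s,\xi)\,e^{i\langle\xi,X_s-x\rangle}\bigr]
    = \Re p(X_s,\xi)\cos\langle\xi,X_s-x\rangle - \Im p(X_s,\xi)\sin\langle\xi,X_s-x\rangle.
$$
For the first summand I write $\Re p(X_s,\xi)\cos\theta = \Re p(X_s,\xi) - \Re p(X_s,\xi)(1-\cos\theta)$; since $\Re p \ge 0$ and $1-\cos\theta \ge 0$, this is bounded below by $\inf_z\Re p(z,\xi) - \|\Re p(\cdot,\xi)\|_\infty (1-\cos\theta)$. For the second summand I use the sector condition \eqref{th22} together with the crude estimate $|\sin\theta|\le 1$ to obtain $-\Im p(X_s,\xi)\sin\theta \ge -c\inf_z\Re p(z,\xi)$; this step is exactly what converts the sector constant $c$ into the $(1-c)$ loss in the leading constant. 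The residual correction coming from $(1-\cos\theta)$ is $\|\Re p(\cdot,\xi)\|_\infty\int_0^t(1-\Re\lambda_s(x,\xi))\,ds$, and the elementary bound $|1-\lambda_s(x,\xi)|\le s\|p(\cdot,\xi)\|_\infty$, which again follows at once from the integral identity, makes this $O(t^2)$ uniformly in $x$.

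Assembling these pieces gives $\Re I_t(x,\xi) \ge (1-c)\,t\inf_z\Re p(z,\xi) - C(\xi)\,t^2$ and therefore
$$
    |\lambda_t(x,\xi)|^2 \le 1 - 2(1-c)\,t\inf_z\Re p(z,\xi) + C'(\xi)\,t^2
$$
uniformly in $x$. Assuming $\inf_z\Re p(z,\xi)>0$ (the case $\inf_z\Re p(z,\xi)=0$ is the trivial $|\lambda_t|\le 1$), one chooses $t_0(\xi,\varepsilon)$ small enough that $C'(\xi)\,t \le 2\varepsilon\inf_z\Re p(z,\xi)$ on $[0,t_0]$, which absorbs the quadratic term into $2\varepsilon\,t\inf_z\Re p(z,\xi)$. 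Applying $1-u\le e^{-u}$ for $u\ge 0$ and extracting the square root yields the desired bound $\sup_x|\lambda_t(x,\xi)|\le \exp[-(1-c-\varepsilon)\,t\inf_z\Re p(z,\xi)]$. The only delicate point is recognizing that the naive $|\sin\theta|\le 1$ inequality, paired with the sector estimate on $\Im p$, is precisely what produces the sharp constant $(1-c)$; all other errors are genuinely $O(t^2)$ uniformly in $x$ because \eqref{assumption} and \eqref{th22} give $x$-independent bounds on $|p(x,\xi)|$ and $|\Im p(x,\xi)|$ for each fixed $\xi$.
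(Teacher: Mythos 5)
Your proposal is correct, and it takes a genuinely different and arguably cleaner route than the paper.

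The paper works with the decomposition $|\lambda_t|\le \Re\lambda_t + |\Im\lambda_t|$, estimates $\Re\lambda_t$ and $\Im\lambda_t$ separately, and controls the ``tail'' of each integral by localizing $X_s$ into a small ball $\{|y-x|\le g_j(\xi,\varepsilon)\}$ and invoking the exit-time bound of Proposition~\ref{appendix2}. The auxiliary radii $g_1, g_2$ are tuned to make $|\sin\langle y-x,\xi\rangle|$ small on the ball, and the exit-time bound introduces the symbol at the \emph{rescaled} frequency $1/g_j(\xi,\varepsilon)$; this is where the rather elaborate $t_0 = t_2(\xi,\varepsilon)$ comes from. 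You instead expand $|\lambda_t|^2 = 1 - 2\Re I_t + |I_t|^2$ and lower-bound $\Re I_t$ directly: the identity $\cos\theta = 1 - (1-\cos\theta)$, the nonnegativity of $\Re p$, and the observation that $\Ee^x[1-\cos\langle X_s-x,\xi\rangle] = 1 - \Re\lambda_s(x,\xi) \le |1-\lambda_s(x,\xi)| = |I_s(x,\xi)| \le s\|p(\cdot,\xi)\|_\infty$ together replace the exit-time localization with a purely analytic, self-improving bound. Your $t_0$ consequently involves only $\inf_z\Re p(z,\xi)$ and $\|p(\cdot,\xi)\|_\infty$ at the original frequency $\xi$, which is simpler and more explicit than the paper's version. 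Both proofs rest on the same foundational identity $\lambda_t = 1 - \int_0^t \Ee^x[p(X_s,\xi)e^{i\langle\xi,X_s-x\rangle}]\,ds$, which in turn requires that $e_\xi$ lie in the extended domain (Proposition~\ref{appendix1}); you should make sure to cite that explicitly if this proof is written out. One small point worth stating when fleshing out the sketch: the case $\inf_z\Re p(z,\xi)=0$ is not just ``trivial'' --- by the sector condition it forces $\Im p(\cdot,\xi)\equiv 0$ as well, and then $|\lambda_t|\le 1 = \exp[0]$ is indeed the claim; also, $\|\Re p(\cdot,\xi)\|_\infty \le \|p(\cdot,\xi)\|_\infty$ lets you absorb all quadratic errors into a single constant $C'(\xi)\le 2\|p(\cdot,\xi)\|_\infty^2$, giving $t_0 = \varepsilon\inf_z\Re p(z,\xi)\,/\,\|p(\cdot,\xi)\|_\infty^2$.
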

As a direct consequence of Proposition \ref{th2}, we get

\begin{corollary}\label{th1}
    Let $(X_t)_{t\ge0}$ be a Feller process with generator $(A,D(A))$ satisfying the assumptions of Proposition \ref{th2}. Assume further that the symbol $p(x,\xi)$ is real. Then, for any $\xi\in\R^d$ and $\delta\in(0,1)$, there exists some $t_0:=t_0(\xi, \delta)>0$ such that for any $t\in[0,t_0]$,
\begin{equation}\label{th11}
    \sup_{x\in\R^d} |\lambda_t(x,\xi)|
    \le \exp \biggl[-\delta\,{t}\inf_{z\in\R^d}p(z,\xi)\biggr].
\end{equation}
\end{corollary}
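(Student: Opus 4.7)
The plan is to observe that when the symbol $p(x,\xi)$ is real, the sector condition \eqref{th22} in Proposition \ref{th2} is automatically satisfied with the trivial choice $c=0$: indeed, $\Im p(x,\xi)\equiv 0$ forces $\sup_{x\in\R^d}|\Im p(x,\xi)|=0$, while the right-hand side $c\inf_{x\in\R^d}\Re p(x,\xi)$ also vanishes when $c=0$. I should note in passing that, for each fixed $x$, the map $\xi\mapsto p(x,\xi)$ is a continuous negative definite function, so $\Re p(x,\xi)\ge 0$; in particular $\inf_{z}\Re p(z,\xi)\ge 0$, so that the exponent in \eqref{th11} has the right sign.

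The next step is to translate the conclusion of Proposition \ref{th2} under $c=0$ into the desired form. Given $\delta\in(0,1)$, I would set $\varepsilon:=1-\delta$, which lies in $(0,1)=(0,1-c)$ and is therefore an admissible parameter in \eqref{th21}. Applying Proposition \ref{th2} with this choice of $\varepsilon$ yields some $t_0=t_0(\xi,\varepsilon)>0$, which I would simply relabel as $t_0(\xi,\delta)$, such that for every $t\in[0,t_0]$,
\[
    \sup_{x\in\R^d}|\lambda_t(x,\xi)|\le \exp\bigl[-(1-c-\varepsilon)\,t\,\inf_{z\in\R^d}\Re p(z,\xi)\bigr]=\exp\bigl[-\delta\,t\,\inf_{z\in\R^d}p(z,\xi)\bigr],
\]
where in the last equality I use the reality assumption $\Re p(z,\xi)=p(z,\xi)$. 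This is exactly \eqref{th11}.

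Since the corollary is a pure specialization of Proposition \ref{th2}, no genuine obstacle arises: the only points worth flagging are the admissibility of $c=0$ in the sector condition and the affine bijection $\varepsilon\leftrightarrow 1-\varepsilon=\delta$ between the parameter range $(0,1-c)=(0,1)$ appearing in the proposition and the range $(0,1)$ appearing in the statement of the corollary.
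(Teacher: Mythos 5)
Your argument is correct and is precisely the specialization the paper has in mind when it labels the corollary ``a direct consequence of Proposition \ref{th2}'': the reality of the symbol makes the sector condition \eqref{th22} hold trivially with $c=0$, and the substitution $\varepsilon=1-\delta$ converts the bound \eqref{th21} into \eqref{th11}.
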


\begin{proof}[Proof of Proposition \ref{th2}]
Fix $\xi\in\R^d$ and $\varepsilon\in(0,1-c)$. Without loss of generality, we may assume that $\inf_{z\in\R^d}\Re p(z,\xi)>0$ and $\xi \neq 0$; otherwise, the assertion \eqref{th21} would  be trivial.

\noindent
\emph{Step 1.} Denote by $P(t,x,dy)$ the transition function of the Feller process $(X_t)_{t\ge0}$ and write $e_{\xi}(x)=e^{i\langle \xi, x\rangle}$ for $x,\xi\in\R^d$. Below we will examine the technique in the proof of Theorem \ref{th21th1} (i) in detail.
Since there exists a constant $c>0$ such that $|p(x,\xi)|\le c(1+|\xi|^2)$ for all $x,\xi\in\R^d$, the Feller operator $A$ has an extension such that $Ae_\xi$ is well defined, cf.\ \cite[Lemma 2.3]{S3} or Proposition \ref{appendix1} in the appendix. The assumption $p(\cdot,0)\equiv 0$ guarantees, see \cite[Theorem 5.2]{S1}, that the process $(X_t)_{t\geq 0}$ is conservative. Therefore, see \cite[Corollary 3.6]{S3} or Proposition \ref{appendix1}, we find for $t>0$ and $x,\xi\in\R^d$,
\begin{equation}\label{proofeee}
    T_te_{\xi}(x)
    =e_{\xi}(x)+\int_0^t T_s A e_{\xi}(x) \,ds.
\end{equation}
Note that, see e.g.\ \cite[Proof of Lemma 6.3, Page 607, Lines 14--15]{S3},
$$
    -p(x,\xi)=e_{-\xi}(x)Ae_{\xi}(x).
$$
Therefore,
\begin{equation}\label{proof1111}\begin{aligned}
    \lambda_t(x,\xi)
    &=e_{-\xi}(x)T_te_{\xi}(x)\\
    &=1-e^{-i\langle \xi, x\rangle}\int_0^t T_s\Bigl(p(\cdot,\xi)e^{i\langle \xi, \cdot\rangle}\Bigr)(x)\, ds\\
    &=1-\int_0^t\int p(y,\xi)e^{i\langle y-x,\xi\rangle}\,P(s,x,dy)\,ds.
\end{aligned}\end{equation}

\noindent
\emph{Step 2.}
Denote by $\Re z$ and $\Im z$ the real and imaginary part of $z\in\C$. From \eqref{char1} we get
\begin{equation}\label{proof222}
    \Re \lambda_t(x,\xi)
    =\int\cos \langle y-x,\xi\rangle\,P(s,x,dy),
\end{equation}
and
$$
    \Im \lambda_t(x,\xi)
    =\int\sin \langle y-x,\xi\rangle\,P(s,x,dy).
$$

Using \eqref{proof1111}, we find for all $t>0$ and $x,\xi\in\R^d$,
$$
    \Re \lambda_t(x,\xi)
    =1-\int_0^t \int \Bigl(\cos\langle y-x,\xi\rangle \Re p(y,\xi)-\sin\langle y-x,\xi\rangle\Im p(y,\xi)\Bigr) P(s,x,dy)\,ds.
$$
Thus, for every $t>0$,
\begin{align*}
    \Re \lambda_t(x,\xi)
    &\ge 1-\int_0^t \int \Bigl(\Re p(y,\xi)+|\Im p(y,\xi)|\Bigr)\,P(s,x,dy)\,ds\\
    &\ge 1-2\sup_{z\in\R^d} |p(z,\xi)|\, t.
\end{align*}
For every $\varepsilon\in (0,1-c)$ we define $t_1=t_1(\xi, \varepsilon)>0$ by
\begin{equation}\label{remark1}
    t_1 :=
    \frac{\varepsilon }{8\sup\limits_{z\in\R^d} |p(z,\xi)|}.
\end{equation}
Then we find for all $t\in(0,t_1]$,
\begin{equation}\label{proofth21}
    \Re \lambda_t(x,\xi)\ge 1-\frac{\varepsilon}{4}.
\end{equation}

Set
$$
    g_1(\xi,\varepsilon)
    :=\frac{\varepsilon}{4|\xi|}\biggl[\frac{\inf_{z\in\R^d} \Re p(z,\xi)}{1+\sup_{z\in\R^d}|\Im p(z,\xi)|}\wedge 1\biggr]
$$
and denote by $\tau_{B(x,r)}$ the first exit time of the process from the open ball $B(x,r)$, i.e.\
$$
    \tau_{B(x,r)}:=\inf\{t>0\::\:X_t\notin B(x,r)\}.
$$
Then we have for every $t>0$ and $x,\xi\in \R^d$,
\begin{align*}
    \Re \lambda_t(x,\xi)&\le 1-\int_0^t\int_{\{|y-x|\le g_1(\xi,\varepsilon)\}} \Re p(y,\xi)\cos \langle y-x,\xi\rangle\,P(s,x,dy)\,ds\\
    &\qquad+\int_0^t\int_{\{|y-x|\le g_1(\xi,\varepsilon)\}} |\Im p(y,\xi)||\sin\langle y-x,\xi\rangle|\,P(s,x,dy)\,ds\\
    &\qquad+2\sup_{z\in\R^d}|p(z,\xi)|\int_0^t \Pp^x\big(|X_s-x|\ge g_1(\xi,\varepsilon)\big)\,ds\\
    &\le 1-\inf_{z\in\R^d}\Re p(z,\xi)\int_0^t\int_{\{|y-x|\le g_1(\xi,\varepsilon)\}} \cos \langle y-x,\xi\rangle\,P(s,x,dy)\,ds\\
    &\qquad+\sup_{z\in\R^d} |\Im p(z,\xi)|\int_0^t\int_{\{|y-x|\le g_1(\xi,\varepsilon)\}} |\sin \langle y-x,\xi\rangle|\,P(s,x,dy)\,ds\\
    &\qquad+2\sup_{z\in\R^d}|p(z,\xi)|\int_0^t \Pp^x\big(\tau_{B(x,g_1(\xi,\varepsilon))}\le s\big)\,ds.
\end{align*}
In the second inequality we used that $\cos \langle y-x,\xi\rangle \geq 0$ on the set $\{|y-x|\le g_1(\xi,\varepsilon)\}$ and
$\{|X_s-x|\ge g_1(\xi,\varepsilon)\} \subset \{\tau_{B(x,g_1(\xi,\varepsilon))}\le
s\}$.

We know from \cite[Lemmas 4.1 and Lemma 5.1]{S3}, see also Proposition \ref{appendix2} in the appendix for a simple self-contained proof, that for $x,\xi\in\R^d$ and $s>0$,
\begin{equation}\label{exit}\begin{aligned}
    \Pp^x\big(\tau_{B(x,g_1(\xi,\varepsilon))}\le s\big)
    &\le c_1\,s \sup_{|y-x|\le g_1(\xi,\varepsilon)} \; \sup_{|\eta|\le 1/g_1(\xi,\varepsilon)}|p(y,\eta)|\\
    &\le c_1\,s \sup_{z\in\R^d} \; \sup_{|\eta|\le 1/g_1(\xi,\varepsilon)}|p(z,\eta)|
\end{aligned}\end{equation}
for some absolute constant $c_1>0$. Note that on the set $\{|y-x|\le g_1(\xi,\varepsilon)\}$,
$$
    |\sin \langle y-x,\xi\rangle|\le |\langle y-x,\xi\rangle|\le g_1(\xi,\varepsilon)|\xi|.
$$
If we combine all estimates from above, we arrive at
\begin{align*}
    \Re \lambda_t(x,\xi)&\le 1-\inf_{z\in\R^d} \Re p(z,\xi)\int_0^t\int_{\{|y-x|\le g_1(\xi,\varepsilon)\}} \cos\langle y-x,\xi\rangle\,P(s,x,dy)\,ds\\
    &\qquad +\frac{\varepsilon}{4}\inf_{z\in\R^d}\Re p(z,\xi) \, t+{c_1} \sup_{z\in\R^d}|p(z,\xi)| \; \sup_{z\in\R^d}\sup_{|\eta|\le1/g_1(\xi,\varepsilon)}|p(z,\eta)|\, t^2\\
    &\le 1-\inf_{z\in\R^d} \Re p(z,\xi)\int_0^t\int \cos \langle y-x,\xi\rangle\,P(s,x,dy)\,ds\\
    &\qquad +\inf_{z\in\R^d} \Re p(z,\xi)\int_0^t\Pp^x\big(|X_s-x|\ge g_1(\xi,\varepsilon)\big)\,ds\\
    &\qquad +\frac{\varepsilon}{4}\inf_{z\in\R^d}\Re p(z,\xi)\, t +{c_1} \sup_{z\in\R^d}|p(z,\xi)| \; \sup_{z\in\R^d}\sup_{|\eta|\le1/g_1(\xi,\varepsilon)}|p(z,\eta)|\,t^2\\
    &\le 1-\inf_{z\in\R^d} \Re p(z,\xi)\int_0^t\int \cos \langle y-x,\xi\rangle\,P(s,x,dy)\,ds\\
    &\qquad +\frac{c_1}{2} \inf_{z\in\R^d} \; \Re p(z,\xi)\sup_{z\in\R^d}\sup_{|\eta|\le1/g_1(\xi,\varepsilon)}|p(z,\eta)|\,t^2\\
    &\qquad +\frac{\varepsilon}{4}\inf_{z\in\R^d}\Re p(z,\xi)\, t +c_1 \sup_{z\in\R^d}|p(z,\xi)|\; \sup_{z\in\R^d}\sup_{|\eta|\le1/g_1(\xi,\varepsilon)}|p(z,\eta)|\,t^2\\
    &= 1-\inf_{z\in\R^d} \Re p(z,\xi)\int_0^t \Re \lambda_s(x,\xi)\,ds \\
    &\qquad +\frac{\varepsilon}{4}\inf_{z\in\R^d}\Re p(z,\xi)\, t +\frac{3c_1}{2} \sup_{z\in\R^d}|p(z,\xi)|\; \sup_{z\in\R^d}\sup_{|\eta|\le1/g_1(\xi,\varepsilon)}|p(z,\eta)|\,t^2,
\end{align*}
For the third inequality we used $\{|X_s-x|\ge g_1(\xi,\varepsilon)\} \subset \{\tau_{B(x,g_1(\xi,\varepsilon))}\le
s\}$ and \eqref{exit}, while the last equality follows from \eqref{proof222}.

Using \eqref{proofth21} we find for all $t\in (0,t_1]$,
\begin{equation}\label{proofth22}\begin{aligned}
    \Re \lambda_t(x,\xi)
    &\le 1-\Bigl(1-\frac{\varepsilon}{2}\Bigr)\inf_{z\in\R^d} \Re p(z,\xi)\,t\\
    &\qquad +\frac{3c_1}{2} \sup_{z\in\R^d}|p(z,\xi)| \; \sup_{z\in\R^d}\sup_{|\eta|\le1/g_1(\xi,\varepsilon)}|p(z,\eta)|t^2.
\end{aligned}\end{equation}

\noindent
\emph{Step 3.}
We will now consider $\Im \lambda_t(x,\xi)$. For every $t>0$ and $x,\xi\in\R^d$, we find from \eqref{proof1111} that
$$
    \Im \lambda_t(x,\xi)
    =-\int_0^t \int \Bigl(\cos\langle y-x,\xi\rangle \Im p(y,\xi)+\sin\langle y-x,\xi\rangle\Re p(y,\xi)\Bigr) P(s,x,dy)\,ds.
$$
Therefore, for each $t>0$,
\begin{align*}
    \big|\Im \lambda_t(x,\xi)\big|
    &\le \sup_{z\in\R^d}|\Im p(z,\xi)|\,t + \int_0^t \int\Re p(y,\xi) |\sin \langle y-x,\xi\rangle|\, P(s,x,dy)\,ds.
\end{align*}
Set
$$
    g_2(\xi,\varepsilon)
    :=\frac{\varepsilon}{4|\xi|} \, \frac{\inf_{z\in\R^d}\Re p(z,\xi)}{\sup_{z\in\R^d}\Re p(z,\xi)}.
$$
Then, similar to the reasoning in Step 2, we see
\begin{align*}
    \big|\Im \lambda_t(x,\xi)\big|
    &\le\sup_{z\in\R^d}|\Im p(z,\xi)|\,t\\
    &\quad +\sup_{z\in\R^d}\Re p(z,\xi)\int_0^t \int_{\{|y-x|\le g_2(\xi,\varepsilon)\}} |\sin \langle y-x,\xi\rangle|\, P(s,x,dy)\,ds\\
    &\quad+ \sup_{z\in\R^d}\Re p(z,\xi)\int_0^t\Pp^x\big(|X_s-x|\ge g_2(\xi,\varepsilon)\big)\,ds\\
    &\le \sup_{z\in\R^d}|\Im p(z,\xi)|\,t+ \frac{\varepsilon}{4}\inf_{z\in\R^d}\Re p(z,\xi) \,t \\
    &\quad + \frac{c_1}{2} \sup_{z\in\R^d}|p(z,\xi)|\; \sup_{z\in\R^d}\sup_{|\eta|\le1/g_2(\xi,\varepsilon)}|p(z,\eta)|\,t^2.
\end{align*}
This along with \eqref{proofth21} and \eqref{proofth22} yields for all $t\in (0,t_1]$,
\begin{align*}
    |\lambda_t (x,\xi)|&\le  |\Re \lambda_t(x,\xi)| +|\Im \lambda_t(x,\xi)| \\
    &= \Re \lambda_t(x,\xi) +|\Im \lambda_t(x,\xi)| \\
    & \le 1-\biggl[\Bigl(1-\frac{3\varepsilon}{4}\Bigr)\inf_{z\in\R^d} \Re p(z,\xi)-\sup_{z\in\R^d}|\Im p(z,\xi)|\biggr]\, t \\
    &\quad +\frac{c_1}{2} \sup_{z\in\R^d}|p(z,\xi)|\biggl[3\sup_{z\in\R^d}\sup_{|\eta|\le1/g_1(\xi,\varepsilon)}|p(z,\eta)| +\sup_{z\in\R^d}\sup_{|\eta|\le1/g_2(\xi,\varepsilon)}|p(z,\eta)|\biggr]\, t^2.
\end{align*}
Define $t_2=t_2(\varepsilon,\xi)$ by
$$
    t_2 :=
    t_1 \wedge \frac{%
        {\varepsilon}\inf\limits_{z\in\R^d} \Re p(z,\xi)%
        }{%
        2c_1\sup\limits_{z\in\R^d}|p(z,\xi)|\left[3\sup\limits_{z\in\R^d}\:\sup\limits_{|\eta|\le1/g_1(\xi,\varepsilon)}|p(z,\eta)| +\sup\limits_{z\in\R^d}\:\sup\limits_{|\eta|\le1/g_2(\xi,\varepsilon)}|p(z,\eta)|\right]
        }\,.
$$
Then we obtain for all $t\in (0,t_2]$,
\begin{equation}\label{proofth23}
    \big|\lambda_t(x,\xi)\big|
    \le 1-\Bigl[(1-\varepsilon)\inf_{z\in\R^d} \Re p(z,\xi)-\sup_{z\in\R^d}|\Im p(z,\xi)|\Bigr] \,t.
\end{equation}
Because of the sector condition \eqref{th22}, we see
\begin{align*}
    \big|\lambda_t(x,\xi)\big|
    &\le 1-(1-c-\varepsilon)\, t\inf_{z\in\R^d} \Re p(z,\xi)\\
    &\le\exp\biggl[-(1-c-\varepsilon){t}\inf_{z\in\R^d}\Re p(z,\xi)\biggr],
\end{align*}
where the last estimate follows from the elementary inequality $1-r\le
e^{-r}$ for $r\in\R$.
 In particular, for any $t\in(0,t_2]$,
\begin{equation*}
    \sup_{x\in\R^d}|\lambda_t(x,\xi)|
    \le\exp\biggl[-(1-c-\varepsilon)\,t\inf_{z\in\R^d} \Re p(z,\xi)\biggr],
\end{equation*} which is the required assertion by taking $t_0=t_2$.
\end{proof}

\begin{remark}\label{remarkone}
 \smallskip\noindent
    \textup{\bfseries (i)}  Note that $t_2(\varepsilon,\xi)\to 0$ as $\varepsilon\to 0$ which means that the approach above fails for $\varepsilon=0$.
Therefore, Proposition \ref{th2} will, in general, not hold with $\varepsilon=0$ nor can we expect Corollary \ref{th1} to be true if $\delta=1$.

 \smallskip\noindent
    \textup{\bfseries (ii)}  A variant of our approach yields a uniform lower bound for characteristic functions for small $t$. More precisely: \emph{Let $(X_t)_{t\ge0}$ be a Feller process with the generator $(A,{D}(A))$ such that $C_c^\infty(\R^d)\subset D(A)$. Then $A|_{C_c^\infty(\R^d)}=-p(\cdot,D)$ is a pseudo differential operator with symbol $p(x,\xi)$. Assume that the symbol satisfies \eqref{assumption}. Then for any $\varepsilon>0$ and $\xi\in\R^d$, there exists some $t_0=t_0(\varepsilon,\xi)>0$ such that for any $t\in(0,t_0]$
$$
    \inf_{x\in\R^d} |\lambda_t(x,\xi)|\ge \exp \biggl(-{(1+\varepsilon)\,t}\sup_{z\in\R^d} | p(z,\xi)|\biggr).
$$} 

\smallskip\noindent
\textup{\bfseries (iii)} From the pointwise estimate \eqref{char4}, one can get that for any $\xi\in\R^d$, there exists some $t_0:=t_0(\xi)>0$ such that for all $t\in[0,t_0]$,
$$
    \sup_{x\in\R^d} |\lambda_t(x,\xi)|\le e^{-t\,\inf\limits_{x\in\R^d}\Re p(x,\xi)}+ C(\xi,1)\,t.
$$
Although the remainder term $C(\xi,1)$ in \eqref{char4} is well-known, see \cite[Lemma 2]{S4} for details,
we were not able to derive from this estimate the assertion \eqref{th21}.
\end{remark}

Our main result in this subsection is the following uniform upper bound of the characteristic function, which is just Theorem \ref{chartth} in Section \ref{section1}.
\begin{theorem}\label{th23}
    Let $(X_t)_{t\ge0}$ be a Feller process with the generator $(A,{D}(A))$ such that $C_c^\infty(\R^d)\subset D(A)$, i.e.\ $A|_{C_c^\infty(\R^d)}=-p(\cdot,D)$ is a pseudo differential operator with symbol $p(x,\xi)$. For all $x\in\R^d$ and $t\ge0$, let $\lambda_t(x,\xi)$ be the characteristic function of $(X_t)_{t\ge0}$ given by \eqref{char1}. Assume that the symbol satisfies \eqref{assumption}. Then, for all $t\ge0$ and $\xi\in \R^d$,
    $$
        \sup_{x\in\R^d}|\lambda_{t}(x,\xi)|\le \exp \biggl[- \frac{t}{16}\inf_{z\in\R^d} \Re p(z,2\xi)\biggr].
    $$
\end{theorem}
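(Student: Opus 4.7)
The strategy is a \emph{local symmetrization} that reduces the estimate to one on the real-valued quantity $|\lambda_t(x,\xi)|^2$, thereby bypassing the sector condition needed in Proposition~\ref{th2}. I would first introduce an independent copy $(\tilde X_t)_{t\ge 0}$ of $(X_t)_{t\ge 0}$ starting from the same point $x$; the pair $(X_t,\tilde X_t)$ is a Feller process on $\R^{2d}$ with generator $A\oplus A$ and symbol $P((z_1,z_2),(\eta_1,\eta_2))=p(z_1,\eta_1)+p(z_2,\eta_2)$, which still satisfies \eqref{assumption} on $\R^{2d}$. By independence,
$$
|\lambda_t(x,\xi)|^2 = \Ee^{(x,x)}\bigl[\cos\langle X_t-\tilde X_t,\xi\rangle\bigr],
$$
a real-valued quantity whose time-derivative at $t=0$ equals $-2\Re p(x,\xi)$ by Theorem~\ref{th21th1}(i). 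Equivalently, $|\lambda_t(x,\xi)|^2$ is the characteristic function of the pair evaluated in the direction $(\xi,-\xi)$; the corresponding symbol has real part $\Re p(z_1,\xi)+\Re p(z_2,\xi)$ and imaginary part $\Im p(z_1,\xi)-\Im p(z_2,\xi)$, the latter vanishing on the diagonal $z_1=z_2$.

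I would then adapt the computation from the proof of Proposition~\ref{th2} to this pair process at the direction $(\xi,-\xi)$. The off-diagonal imaginary contribution $\bigl(\Im p(X_s,\xi)-\Im p(\tilde X_s,\xi)\bigr)\sin\langle X_s-\tilde X_s,\xi\rangle$, which played the role of the sector-condition nuisance in Proposition~\ref{th2}, is now controlled via its antisymmetry in $(X_s,\tilde X_s)$ together with the elementary estimate $|\sin\langle X_s-\tilde X_s,\xi\rangle|\le|X_s-\tilde X_s|\,|\xi|$ and the exit-time bound (Proposition~\ref{appendix2}) applied to the pair process. Combined with the double-angle identity $\sin^2\theta=\frac{1}{2}(1-\cos 2\theta)$, which is what produces the doubling $\xi\mapsto 2\xi$, this yields for some $t_0=t_0(\xi)>0$ and all $t\in[0,t_0]$ a small-time bound
$$
\sup_{x\in\R^d}|\lambda_t(x,\xi)|^2\le \exp\Bigl[-\frac{t}{8}\inf_{z\in\R^d}\Re p(z,2\xi)\Bigr],
$$
hence $\sup_x|\lambda_t(x,\xi)|\le\exp\bigl[-\frac{t}{16}\inf_z\Re p(z,2\xi)\bigr]$ on $[0,t_0]$.

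To extend from the small-$t$ regime to all $t>0$, I would exploit the Markov property of the pair process in a Gr\"onwall-type bootstrap. The Markov identity $|\lambda_{t+s}(x,\xi)|^2\le\bigl(\Ee^x|\lambda_t(X_s,\xi)|\bigr)^2$, combined with the small-$t$ bound applied uniformly in the starting point, lets the exponential decay propagate iteratively and yields the stated bound for arbitrary $t>0$.

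The main obstacle is the analysis in the second paragraph: without the sector condition, one must delicately use the antisymmetry of the off-diagonal imaginary term and localize to a neighborhood of the diagonal, tracking constants carefully to extract the explicit factors $1/16$ and $2\xi$. The extension to all $t$ is a secondary technical point, since Feller characteristic functions do not factorize in $t$: the naive Markov bound yields only monotonicity of $t\mapsto \sup_x|\lambda_t(x,\xi)|$, so a Gr\"onwall-type argument on the pair process is required to recover exponential decay uniformly in $t$.
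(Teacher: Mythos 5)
Your local symmetrization idea is the right one and matches the paper's strategy in spirit, but the paper's version is cleaner and yours leaves two genuine gaps.

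The paper does not work with the pair $(X_t,\tilde X_t)$ on $\R^{2d}$: it sets $\tilde X_t = 2X_0^*-X_t^*$ (a \emph{reflected} independent copy) and then forms the $\R^d$-valued half-sum $X_t^S=\tfrac12(X_t+\tilde X_t)$. Lemma~\ref{symmetric} shows $X^S$ is a Feller process on $\R^d$ with genuinely \emph{real} symbol $2\Re p(x,\xi/2)$ and characteristic function $|\lambda_t(x,\xi/2)|^2$, so the entire imaginary-part bookkeeping disappears and one can quote the real-symbol case verbatim. In your set-up the pair process has symbol $p(z_1,\xi)+p(z_2,-\xi)$ in the direction $(\xi,-\xi)$, whose imaginary part $\Im p(z_1,\xi)-\Im p(z_2,\xi)$ is nonzero off the diagonal. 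You propose to kill this term by ``antisymmetry'', but note that the relevant integrand $\sin\langle y_1-y_2,\xi\rangle\,\bigl(\Im p(y_1,\xi)-\Im p(y_2,\xi)\bigr)$ is the product of two antisymmetric factors, hence \emph{symmetric} in $(y_1,y_2)$ and does not integrate to zero against the symmetric measure $P(s,x,dy_1)P(s,x,dy_2)$. What actually controls it is the combination you mention in passing -- smallness of $\sin$ near the diagonal plus the exit-time bound on its complement -- but the antisymmetry argument as stated is a red herring. Also, the doubling $\xi\mapsto 2\xi$ does not come from $\sin^2\theta=\tfrac12(1-\cos 2\theta)$: in the paper it comes from subadditivity of $\sqrt{p(x,\cdot)}$ (giving $\inf_z p(z,2\xi)\le 4\inf_z p(z,\xi)$) together with the \emph{cosine} half-angle identity used in the iteration.

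The more serious gap is the extension from small $t$ to all $t$. You acknowledge that the naive Markov bound $\sup_x|\lambda_{t+s}(x,\xi)|\le\sup_x|\lambda_t(x,\xi)|$ gives only monotonicity, and you propose an unspecified ``Gr\"onwall-type bootstrap''. This is not what the paper does and, as far as I can see, Gr\"onwall does not help: the Markov property alone gives no contraction factor per time step. The paper's Step~1 performs a multiplicative induction over $m$ equal steps of length $t/m$: the real-valuedness of $\lambda_{jt/m}$ lets one replace $e^{i\langle\cdot\rangle}$ by $\cos\langle\cdot\rangle$, Cauchy--Schwarz then splits
$$
|\lambda_{(j+1)t/m}(x,\xi)|\le\sqrt{\Ee^x\bigl(\cos^2\langle X_{t/m}-x,\xi\rangle\bigr)}\;\sup_z|\lambda_{jt/m}(z,\xi)|,
$$
and the identity $\cos^2\theta=\tfrac12\bigl(1+\cos 2\theta\bigr)$ turns $\Ee^x(\cos^2\langle\cdot\rangle)$ into $\tfrac12\bigl(1+\lambda_{t/m}(x,2\xi)\bigr)$, which the small-$t$ bound (applied at frequency $2\xi$) shows is bounded by $\exp\bigl[-\tfrac{1}{16}\tfrac{t}{m}\inf_z p(z,2\xi)\bigr]$. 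Each step of the $m$-step chain therefore contributes its own decay factor. This Cauchy--Schwarz/half-angle mechanism is the key ingredient missing from your proposal; without it the small-$t$ estimate cannot be propagated to a $t$-uniform exponential bound.
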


\begin{proof} \emph{Step 1.}
We first assume that the characteristic function $\lambda_t(x,\xi)$ is real for every $t\ge0$ and every $x,\xi\in \R^d$. Then, by Theorem \ref{th21th1} (ii), the corresponding symbol $p(x,\xi)$ is also real. On the other hand, applying Corollary \ref{th1} with $\delta=1/2$ yields that there exists some $t_0:=t_0(\xi)>0$ such that for all $t\in (0,t_0]$, \begin{equation}\label{proof50}
    \sup_{x\in\R^d}|\lambda_t(x,\xi)|
    \le\exp\biggl[-\frac{1}{2}\,t\inf_{z\in\R^d} p(z, \xi)\biggr].
\end{equation}
Since $\sqrt{p(x,\cdot)}$ is subadditive, i.e.\  $\sqrt{ p(x,\xi_1+\xi_2)}\le \sqrt{ p(x,\xi_1)}+ \sqrt{ p(x,\xi_2)}$ for all $x,\xi_1,\xi_2\in\R^d$, we see
$$
    \inf_{z\in\R^d}  p(z, 2\xi) \le 4 \inf_{z\in\R^d}  p(z, \xi).
$$
Thus, \eqref{proof50} leads to
\begin{equation}\label{proof51}
    \sup_{x\in\R^d}|\lambda_t(x,\xi)|
    \le\exp\biggl[-\frac{1}{8}\,t\inf_{z\in\R^d} p(z, 2\xi)\biggr].
\end{equation}

For every $t>0$ we can choose some $m:=m(\xi)\in \N$ such that
$\frac{t}{m}\in (0,t_3]$, where
$$
    t_3= t_0 \wedge \frac{2}{\inf_{z\in\R^d} p(z,2\xi)}.
$$
We will prove by induction that for any $k=1,2, \cdots,m$,
\begin{equation}\label{proof52}\sup_{x\in\R^d}|\lambda_{k \frac tm}(x,\xi)|\le \exp \biggl[- \frac k{16}\,\frac tm\inf_{z\in\R^d} p(z,2\xi)\biggr].\end{equation}

First, according to \eqref{proof51}, we know that \eqref{proof52} holds with $k=1$. Assume that \eqref{proof52} is satisfied with $k=j$. Then, for $k=j+1$, by the Markov property and the fact that the characteristic function $\lambda_t(x,\xi)$ is real for any $t\ge0$ and $x,\xi\in\R^d$, we have
\begin{align*}
   \bigg|\lambda_{(j+1)t/m}(x,\xi)\bigg|&=\bigg|\Ee^x\Bigl(e^{i\langle X_{(j+1)t/m}-x,\xi\rangle}\Bigr)\bigg|\\
   &=\bigg|\Ee^x\Bigl(e^{i\langle X_{t/m}-x,\xi\rangle} \Ee^{X_{t/m}}\Bigl(e^{i\langle X_{jt/m}-x,\xi\rangle}\Bigr)\Bigr)\bigg|\\
   &=\bigg|\Ee^x\Bigl(e^{i\langle X_{t/m}-x,\xi\rangle} \lambda_{jt/m}(X_{t/m},\xi)\Bigr)\bigg|\\
   &=\bigg|\Ee^x\Bigl(\cos\,\langle X_{t/m}-x,\xi\rangle\, \lambda_{jt/m}(X_{t/m},\xi)\Bigr)\bigg|\\
   &\le \sqrt{\Ee^x\Bigl(\cos^2\langle X_{t/m}-x,\xi\rangle \Bigr)}\sqrt{ \Ee^x\Bigl(\lambda_{jt/m}^2(X_{t/m},\xi)\Bigr)}\\
   &\le \sqrt{\Ee^x\Bigl(\cos^2\langle X_{t/m}-x,\xi\rangle \Bigr)}\, \sup_{x\in\R^d} |\lambda_{jt/m}(x,\xi)|\\
   &\le \sqrt{\frac{1+\Ee^x(\cos\,\langle X_{t/m}-x,2\xi\rangle) }{2}} \,  \exp \biggl[- \frac{j}{16}\,\frac tm\inf_{z\in\R^d} p(z,2\xi)\biggr].
\end{align*}
The first inequality follows from the Cauchy-Schwarz inequality, and in the last inequality we have used the induction hypothesis and the fact that
$$
    \cos^2 \theta
    =\frac{1}{2}\big(1+\cos (2\theta)\big),\qquad \theta\in\R.
$$
Therefore,
$$
    \sup_{x\in\R^d} \bigg|\lambda_{(j+1) \frac tm}(x,\xi)\bigg|
    \le \sup_{x\in\R^d} \sqrt{\frac{1+\Ee^x(\cos\,\langle X_{t/m}-x,2\xi\rangle) }{2}}
      \,  \exp \biggl[- \frac{j}{16}\frac tm\inf_{z\in\R^d} p(z,2\xi)\biggr].
$$

For any $x\in\R^d$ we can use \eqref{proof50} and the assumptions that $\lambda_t(x,\xi)$ is real and $\frac{t}{2m}\inf_{z\in\R^d}  p(z,2\xi)\le 1$ to deduce
\begin{align*}
   &\frac{1+\Ee^x(\cos\,\langle X_{t/m}-x,2\xi\rangle) }{2}\\
   &\qquad= \frac{1+\lambda_{t/m} (x,2\xi)}{2}\\
   &\qquad\le \frac{1+|\lambda_{t/m} (x,2\xi)|}{2}\\
   &\qquad\le   \frac{1+\exp\Bigl[-\,\frac{t}{2m}\inf_{z\in\R^d}  p(z,2\xi)\Bigr]}{2}\\
   &\qquad\le \frac{1+1-\Bigl[\,\frac{t}{2m}\inf_{z\in\R^d}  p(z,2\xi)\Bigr]+\frac{1}{2}\Bigl[\,\frac{t}{2m}\inf_{z\in\R^d}  p(z,2\xi)\Bigr]^2 }{2}\\
   &\qquad\le 1-\frac{\,\frac{t}{m}\inf_{z\in\R^d}  p(z,2\xi)}{8}\\
   &\qquad\le \exp\bigg[-\frac{\,\frac{t}{m}\inf_{z\in\R^d}  p(z,2\xi)}{8}\bigg],
\end{align*}
where the third and the last inequality follow from the elementary estimates
$$
    1-r\le e^{-r}\le 1-r+r^2/2,\qquad r\ge0.
$$
Thus, we get
$$
    \sup_{x\in\R^d}\sqrt{\frac{1+\Ee^x(\cos\,\langle X_{t/m}-x,2\xi\rangle) }{2}}
    \le \exp\bigg[-\frac{\,\frac{t}{m}\inf_{z\in\R^d} p(z,2\xi)}{16}\bigg],
$$
and the induction step is complete.

Taking $k=m$ in \eqref{proof52} we find, in particular, for all $t>0$,
\begin{equation}\label{proof522}
    \sup_{x\in\R^d}|\lambda_{t}(x,\xi)|
    \le \exp \biggl[- \frac{t}{16}\inf_{z\in\R^d} p(z,2\xi)\biggr].
\end{equation}

\noindent\emph{Step 2.}
Now we consider the general case where $\lambda_t(x,\xi)$ is not necessarily real. Using a local symmetrization technique we can reduce the general case to the situation treated in Step 1. Let $(X_t)_{t\ge0}$ be a Feller process with the generator $(A,{D}(A))$ and the semigroup $(T_t)_{t\ge0}$ such that $C_c^\infty(\R^d)\subset D(A)$ and $A|_{C_c^\infty(\R^d)}=-p(\cdot,D)$ is a pseudo differential operator with symbol $p(x,\xi)$. Denote by $\lambda_t(x,\xi)$ the characteristic function of $X_t-x$ under $\Pp^x$.

 Construct on the same probability space a stochastic process $(X^*_t)_{t\ge0}$ such that $X^*_0=X_0$ and $(X^*_t)_{t>0}$ is an independent copy of $(X_t)_{t>0}$, and define a further process $(\widetilde{X}_t)_{t\ge0}$ on $\R^d$ by $\widetilde{X}_t=2X^*_0-X^*_t$, $t\ge0$. Clearly, the process $(\widetilde{X}_t)_{t>0}$ is independent of $(X_t)_{t>0}$ but it has the same initial distribution, i.e.\ $\widetilde{X}_0 \sim X_0$. From the proof of Theorem \ref{th21th1} (iii) we see that $(\widetilde{X}_t)_{t\ge0}$ is a Feller process with the generator $(\widetilde{A},{D}(\widetilde{A}))$ and the semigroup $(\widetilde{T}_t)_{t\ge0}$ such that $C_c^\infty(\R^d)\subset D(\widetilde{A})$, and $\widetilde{A}|_{C_c^\infty(\R^d)}=-\widetilde{p}(\cdot,D)$ is a pseudo differential operator with symbol $\widetilde{p}(x,\xi)=p(x,-\xi)$. Moveover, the characteristic function of $(\widetilde{X}_t)_{t\ge0}$ is $\widetilde{\lambda}_t(x,\xi)=\lambda_t(x,-\xi)$ for every $t\ge0$ and $x\in\R^d$.

For every $t\ge0$ we define the local symmetrization ${X}^S_t=\frac{1}{2}\big(X_t+\widetilde{X}_t\big)$. Lemma \ref{symmetric} below shows that the local symmetrization $(X^S_t)_{t\ge0}$ is a Feller process with the generator $(A^S,{D}(A^S))$ such that $C_c^\infty(\R^d)\subset D(A^S)$, and $A^S|_{C_c^\infty(\R^d)}=-p^S(\cdot,D)$ is a pseudo differential operator with symbol $2\Re p(x,\xi/2)$; moreover, the characteristic function of $(X^S_t)_{t\ge0}$ is $|\lambda_t(x,\xi/2)|^2$.

We can now apply the conclusion of Step 1, in particular \eqref{proof522}, to the process $(X^S_t)_{t\ge0}$; we obtain that for any $t>0$,
$$
    \sup_{x\in\R^d}|\lambda_{t}(x,\xi/2)|^2\le \exp \biggl[- \frac{t}{8}\inf_{z\in\R^d} \Re p(z,\xi)\biggr].
$$
That is, $$
    \sup_{x\in\R^d}|\lambda_{t}(x,\xi)|^2\le \exp \biggl[- \frac{t}{8}\inf_{z\in\R^d} \Re p(z,2\xi)\biggr],
$$
which is what we have claimed.
\end{proof}

\begin{lemma}[\bfseries Local Symmetrization]\label{symmetric}
Let $(X_t)_{t\ge0}$ be a Feller process with generator $(A,D(A))$ such that $C_c^\infty(\R^d)\subset D(A)$, i.e.\ $A|_{C_c^\infty(\R^d)}=-p(\cdot,D)$. Denote by $(X^*_t)_{t\ge0}$ an independent copy of $(X_t)_{t\ge0}$, set $\widetilde X_t := 2X_0^*-X^*_t$ and let $(X^S_t)_{t\ge0}$ be the local symmetrization of $(X_t)_{t\geq 0}$, i.e.\ for any $t\ge0$, ${X}^S_t=\frac{1}{2}\big(X_t+\widetilde{X}_t\big)$. Then, $(X^S_t)_{t\ge0}$ is a Feller process with the generator $(A^S,{D}(A^S))$ such that

 \smallskip\noindent
    \textup{\bfseries (i)} $C_c^\infty(\R^d)\subset D(A^S)$, and $A^S|_{C_c^\infty(\R^d)}=-p^S(\cdot,D)$ is a pseudo differential operator with symbol $p^S(x,\xi) = 2\Re p(x,\xi/2)$;

 \smallskip\noindent
    \textup{\bfseries (ii)}
     the characteristic function of $(X^S_t)_{t\ge0}$ is $|\lambda_t(x,\xi/2)|^2$ for every $t\ge0$ and $x\in\R^d$.
\end{lemma}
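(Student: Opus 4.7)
My plan is to establish (ii) by a direct Fourier computation using independence, and then derive (i) from (ii) combined with the Feller semigroup machinery developed earlier in the paper. For (ii): under $\Pp^x$ we have $X_0 = X_0^* = x$, hence $\widetilde X_t = 2x - X_t^*$ and
$$
    X_t^S - x = \tfrac{1}{2}(X_t - x) - \tfrac{1}{2}(X_t^* - x).
$$
By the independence of $X$ and $X^*$ together with $X_t^*$ having the same $\Pp^x$-law as $X_t$,
$$
    \Ee^x\bigl[e^{i\langle X_t^S - x, \xi\rangle}\bigr]
    = \Ee^x\bigl[e^{i\langle X_t - x, \xi/2\rangle}\bigr]\,\Ee^x\bigl[e^{-i\langle X_t^* - x, \xi/2\rangle}\bigr]
    = \lambda_t(x, \xi/2)\,\overline{\lambda_t(x, \xi/2)}
    = |\lambda_t(x, \xi/2)|^2,
$$
which is (ii).

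For (i), set $T_t^S u(x):=\Ee^x[u(X_t^S)]$, which has the representation $T_t^S u(x) = \iint u(\tfrac12(y+2x-y^*))\,P(t,x,dy)\,P(t,x,dy^*)$. Showing that $(T_t^S)_{t\ge 0}$ is a Feller semigroup is mostly routine: contractivity and sub-Markovianity are immediate, the Feller property $T_t^S: C_\infty(\R^d)\to C_\infty(\R^d)$ follows by the same two-step scheme used for $\widetilde X$ in the proof of Theorem \ref{th21th1} (iii) (continuity in $x$ from the Feller property of $X$ and uniform continuity of $u$; decay at infinity from the exit-time estimate of Proposition \ref{appendix2} combined with $u\in C_\infty(\R^d)$), and strong continuity at $t=0$ is standard since $X_0^S=x$. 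Once $(T_t^S)$ is identified as a Feller semigroup, Theorem \ref{th21th1} (i) applies to $X^S$ and yields $\tfrac{d}{dt}\lambda_t^S(x,\xi)|_{t=0} = -p^S(x,\xi)$. Combined with (ii) and the product rule (using $\lambda_0(x,\xi/2)=1$ and $\tfrac{d}{dt}\lambda_t(x,\xi/2)|_{t=0} = -p(x,\xi/2)$), this gives
$$
    -p^S(x,\xi) = -p(x,\xi/2) - \overline{p(x,\xi/2)} = -2\Re p(x,\xi/2),
$$
i.e.\ $p^S(x,\xi) = 2\Re p(x,\xi/2)$. Condition \eqref{assumption} is inherited since $p^S(x,0)=0$ and $|p^S(x,\xi)|\le 2|p(x,\xi/2)|\le 2c(1+|\xi|^2)$.

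The most delicate step is the semigroup identity $T_s^S T_t^S = T_{s+t}^S$, which is not a pathwise consequence of the construction: $X^S$, unlike $X$, is not in general a Markov process under $\Pp^x$ when $X$ lacks stationary independent increments (in the L\'evy case it is automatic, since the symmetrization inherits stationary independent increments). I would handle this either by working directly with the Fourier representation $T_t^S u(x) = \int e^{i\langle x,\xi\rangle}|\lambda_t(x,\xi/2)|^2\,\hat u(\xi)\,d\xi$ for Schwartz functions $u\in\mathscr S(\R^d)$ and verifying consistency there, or by constructing the canonical Feller semigroup generated by $-p^S(\cdot,D)$ on $C_c^\infty(\R^d)$ via the martingale problem for this symbol and then matching its one-dimensional marginals against $|\lambda_t(x,\xi/2)|^2$ using part (i) of Theorem \ref{th21th1}. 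This is the principal technical hurdle; everything else specializes the framework of Section \ref{section2}.
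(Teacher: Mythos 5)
Your derivation of (ii) is exactly the paper's computation, and your product-rule derivation of the symbol $p^S(x,\xi)=2\Re p(x,\xi/2)$ is the same calculation the paper carries out inside the Fourier integral (using Theorem~\ref{th21th1}~(i) and dominated convergence). More importantly, you have correctly located the genuinely delicate step of (i): whether $T^S_sT^S_t=T^S_{s+t}$, equivalently whether $(X^S_t)_{t\ge0}$ is Markov. The paper's proof dispatches this with the single unproved sentence ``Clearly, $(X^S_t)_{t\ge0}$ is a strong Markov process'' and then builds the whole chain (weak infinitesimal generator, $C_b$- to $C_\infty$-Feller upgrade via Proposition~\ref{appendix3}, identification of the weak and Feller generators via Sato's Lemma~31.7) on that assertion. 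Your doubt is well founded: under $\Pp^x$ one has $X^S_t=x+\frac12(X_t-X^*_t)$, a deterministic function of the Markov pair $(X_t,\widetilde X_t)$ through their midpoint, and for a state-dependent symbol the generator $A\oplus\widetilde A$ of the pair applied to $v(y_1,y_2)=u\bigl(\tfrac{y_1+y_2}{2}\bigr)$ gives $-\int e^{i\langle(y_1+y_2)/2,\xi\rangle}\bigl(p(y_1,\xi/2)+p(y_2,-\xi/2)\bigr)\hat u(\xi)\,d\xi$, which is a function of the midpoint alone only on the diagonal $y_1=y_2$. Off the diagonal the level sets of the midpoint map are not lumpable, so Chapman--Kolmogorov can fail unless $p(\cdot,\xi)$ is constant in $x$ (the L\'evy case, where the claim is trivially true).

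Where your proposal does not succeed is in the two repairs you sketch, neither of which actually closes this gap. Verifying the Fourier representation $T^S_tu(x)=\int e^{i\langle x,\xi\rangle}|\lambda_t(x,\xi/2)|^2\hat u(\xi)\,d\xi$ on $\mathscr S(\R^d)$ says nothing about the composition law, because $|\lambda_t(x,\xi/2)|^2$ is not of the semigroup form $e^{-tq(x,\xi)}$ and the kernels $P^S(t,x,\cdot)$ need not satisfy Chapman--Kolmogorov. Constructing a separate Feller process from the martingale problem for $-p^S(\cdot,D)$ presupposes well-posedness of that martingale problem (neither assumed nor implied by well-posedness for $p$), and even granting existence, its transition function would agree with the law of the pathwise $X^S_t$ only to first order at $t=0$; that first-order agreement is precisely the content of (i), so using it to re-derive (ii) for $t>0$ would be circular. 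In short, your proposal correctly isolates the hard step and gives a correct proof of (ii) and of the symbol computation, but the Markov/semigroup property of $X^S$ remains unproved in both your account and the paper's.
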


\begin{proof} Clearly, $({X}^S_t)_{t\ge0}$ is a strong Markov process. Denote by $(T_t^S)_{t\ge0}$ the semigroup of $({X}^S_t)_{t\ge0}$. Since $(X_t)_{t>0}$ and $(\widetilde{X})_{t>0}$ are independent with $X_0 \sim \widetilde{X}_0$ we find all $u\in B_b(\R^d)$ (the set of bounded measurable functions on $\R^d$), $x\in\R^d$ and $t\ge0$,
\begin{equation}\label{symmet1}
    T_t^Su(x)
    =\int u(z)\, P^S(t,x,dz)
    = \iint u\Big(\frac{z_1+z_2}{2}\Big)\, P(t,x,dz_1)\, \widetilde{P}(t,x,dz_2),
\end{equation}
where $P(t,x,dy)$, $\widetilde{P}(t,x,dy)$ and $P^S(t,x,dy)$ are the transition functions of  $({X}_t)_{t\ge0}$, $(\widetilde{{X}}_t)_{t\ge0}$ and $({X}^S_t)_{t\ge0}$, respectively.

As mentioned above, since $p(\cdot, 0)\equiv0$, \cite[Theorem
5.2]{S1} shows that the processes $(X_t)_{t\ge0}$ and $(\widetilde{X}_t)_{t\ge0}$ are conservative. Thus, according to Proposition \ref{appendix5} (i) in the appendix, both semigroups $(T_t)_{t\ge0}$ and $(\widetilde{T}_t)_{t\ge0}$ are $C_b$-Feller semigroups, i.e.\  they map the space $C_b(\R^d)$ of bounded continuous functions on $\R^d$ into itself. This along with \eqref{symmet1} yields the $C_b$-Feller property of $(T_t^S)_{t\ge0}$. Indeed, for any fixed $x_0\in\R^d$ and $t>0$, due to the $C_b$-Feller property of $(T_t)_{t\ge0}$ and $(\widetilde{T}_t)_{t\ge0}$, we know that the probability measures $P(t,x,dz)$ and $\widetilde{P}(t,x,dz )$ converge weakly to $P(t,x_0,dz)$ and $\widetilde{P}(t,x,dz)$ respectively, as $x$ tends to $x_0$. Thus, the convolution of $P(t,x,dz)$ and $\widetilde{P}(t,x,dz )$ converges weakly to the convolution of $P(t,x_0,dz)$ and $\widetilde{P}(t,x_0,dz)$ as $x$ tends to $x_0$. That is, for any $u\in C_b(\R^d)$,
$$\lim_{x\to x_0}  \iint u({z_1+z_2})\, P(t,x,dz_1)\, \widetilde{P}(t,x,dz_2)\!=\! \iint u({z_1+z_2})\, P(t,x_0,dz_1)\, \widetilde{P}(t,x_0,dz_2).$$ This immediately yields the $C_b$-Feller property of $(T_t^S)_{t\ge0}$.

On the other hand, let $({{A}_w^S}, D({{A}_w^S}))$ be the weak infinitesimal operator of the process $({X}^S_t)_{t\ge0}$.
Again from the proof of Theorem \ref{th21th1} (iii) we deduce that $C_c^\infty(\R^d)\subset D({{A}_w^S})$, and ${{A}_w^S}|_{C_c^\infty(\R^d)}=-p^S(\cdot,D)$ is a pseudo differential operator with symbol $p^S(x,\xi)=2\Re p(x,\xi/2)$. Indeed, for any $u\in C_c^\infty(\R^d)$ and $x\in\R^d$, using \eqref{symmet1}, the Fourier transform and the Fubini theorem, we get
\begin{align*}
    &\lim_{t\to 0} \frac{T_t^Su(x)-u(x)}{t}\\
    &=\lim_{t\to 0} \frac{1}{t}\bigg( \iint u\Big( \frac{z_1+z_2}{2}\Big)\,P(t,x,dz_1)\,\widetilde{P}(t,x,dz_2)-u(x)\bigg)\\
    &=\lim_{t\to0}\frac{1}{t}\bigg( 2^d \iint e^{i\langle \xi, x+z_2\rangle}\,\lambda_t(x,\xi)\, \widehat{u}(2\xi)\,d\xi\,\widetilde{P}(t,x,dz_2)-2^d\int e^{i\langle 2\xi, x\rangle} \,\widehat{u}(2\xi)\,d\xi \bigg)\\
    &=\lim_{t\to0}\frac{1}{t}\bigg( 2^d \int e^{i\langle \xi, 2x\rangle}\,\lambda_t(x,\xi)\, \widehat{u}(2\xi)\,d\xi\!\int e^{i\langle \xi, z_2-x\rangle} \widetilde{P}(t,x,dz_2)-2^d\int e^{i\langle 2\xi, x\rangle} \,\widehat{u}(2\xi)\,d\xi \bigg)\\
     &=\lim_{t\to0}\frac{1}{t}\bigg( 2^d \int e^{i\langle \xi, 2x\rangle}\,\lambda_t(x,\xi)\lambda_t(x,-\xi)\, \widehat{u}(2\xi)\,d\xi-2^d\int e^{i\langle 2\xi, x\rangle} \,\widehat{u}(2\xi)\,d\xi \bigg)\\
    &=\lim_{t\to0}2^d \bigg( \int e^{i\langle 2\xi, x\rangle}\,\frac{|\lambda_t(x,\xi)|^2-1}{t}\, \widehat{u}(2\xi)\,d\xi\bigg) \\
         &=-2^d\bigg(\int e^{i\langle 2\xi, x\rangle}\,\big( p(x,\xi)+ p(x,-\xi)\big)\, \widehat{u}(2\xi)\,d\xi\bigg)\\
    &=-\int e^{i\langle \xi, x\rangle}\,\big(2\!\Re p(x,\xi/2)\big)\, \widehat{u}(\xi)\,d\xi\\
    &=-p^S(x,D)u(x).
\end{align*}
The second and the forth equalities follow from the fact that the characteristic functions corresponding to ${P}(t,x,dy)$ and $\widetilde{P}(t,x,dy)$ are $\lambda_t(x,\xi)$ and $\widetilde{\lambda}_t(x,\xi)=\lambda_t(x,-\xi)$, respectively.  In the third equality from below we used Theorem \ref{th21th1} (i) and the dominated convergence theorem. Therefore, the weak infinitesimal operator of $({T}^S_t)_{t\ge0}$ on $C_c^\infty(\R^d)$ is just $-p^S(\cdot,D)$.
According to \cite[Chapter I, (1.49), Page 40]{Dy}, $(({X}^S_t)_{t\ge0},({\Pp}^x)_{x\in\R^d})$ is the solution to the martingale problem for $(-p^S(\cdot,D),C_c^\infty(\R^d)).$

Furthermore, according to Lemma \ref{th21th11}, we get
\begin{equation*}
    \varlimsup_{r\rightarrow\infty}\sup_{x\in\R^d}\sup_{|\xi|\le 1/r}\Re p(x,\xi/2)=0,
\end{equation*}
Hence, Lemma \ref{th21th11} and Proposition \ref{appendix3} in the appendix finally imply that $({T}^S_t)_{t\ge0}$ is a Feller semigroup, and so $({X}^S_t)_{t\ge0}$ is a Feller process.

Let $(A^S,{D}(A^S))$ be the Feller generator of $({X}^S_t)_{t\ge0}$. According to \cite[Lemma 31.7, Page 209]{SA} and the conclusion above, $C_c^\infty(\R^d)\subset D(A^S)$ and $A^S|_{C_c^\infty(\R^d)}=-p^S(\cdot,D)$. Again by the independence of $(X_t)_{t>0}$ and $(\widetilde{X})_{t>0}$ and the fact that $X_0 \sim \widetilde{X}_0$ we see that for any $t>0$ the characteristic function of $({X}^S_t)_{t\ge0}$ is given by
\begin{align*}
    \lambda^S_t(x,\xi)
    &=\Ee^x\big(e^{i\langle X^S_t-x,\xi\rangle}\big)\\
    &=\Ee^x\big(e^{i\langle (X_t-x)/2,\xi\rangle}\times e^{i\langle (\widetilde{X}_t-x)/2,\xi\rangle}\big)\\
        &=\Ee^x\big(e^{i\langle X_t-x,\xi/2\rangle}\times e^{i\langle \widetilde{X}_t-x,\xi/2\rangle}\big)\\
    &=\Ee^{x}\big(e^{i\langle X_t-x,\xi/2\rangle}\big)\Ee^{x}\big( e^{i\langle \widetilde{X}_t-x,\xi/2\rangle}\big)\\
    &=|\lambda_t(x,\xi/2)|^2.
\end{align*}
Together with Theorem \ref{th21th1} (i) this also shows that the symbol of the process $({X}^S_t)_{t\ge0}$ is $2\Re p(x,\xi/2)$.
\end{proof}

\section{Proof of Theorem \ref{pro1} and some Applications}\label{section3}

\subsection{Proof of Theorem \ref{pro1}}

\begin{proof}
\textbf{(i)} For any $t>0$,
\begin{align*}
    \|T_t\|_{1\to \infty}
    &:= \sup_{u\in L^1(dx),\,\|u\|_1=1}\|T_tu\|_\infty\\
    &=\sup_{u\in C_c^\infty(\R^d),\,\|u\|_1=1}\|T_tu\|_\infty\\
    &=\sup_{u\in C_c^\infty(\R^d),\,\|u\|_1=1}\;\sup_{x\in\R^d}\bigg|\int e^{i\langle x,\xi\rangle}\hat{u}(\xi)\lambda_t(x,\xi)\,d\xi\bigg|\\
    &\le \sup_{u\in C_c^\infty(\R^d),\,\|u\|_1=1}\;\sup_{x\in\R^d}\int |\hat{u}(\xi)||\lambda_t(x,\xi)|\,d\xi\\
    &\le (2\pi)^{-d}\sup_{x\in\R^d}\int\big|\lambda_t(x,\xi)\big|\,d\xi,
\end{align*}
where we have used that for all $\xi\in\R^d$, $|\hat{u}(\xi)|\le(2\pi)^{-d}\|u\|_1$. By assumption \eqref{pro11} and Theorem \ref{th2},
$$
    \|T_t\|_{1\to \infty}
    \le (2\pi)^{-d}\int \exp\Bigl({-\frac{t}{16}\inf_{z\in\R^d} \Re p(z,2\xi)}\Bigr)\,d\xi<\infty,
$$
which yields the ultracontractivity of the Feller semigroup. Now we can get the existence of the transition density and the strong Feller property of the semigroup from \cite[Proposition 3.3.11]{Wang} and \cite[Corollary 2.2]{SWSFP}, respectively.

\noindent\textbf{(ii)}
The assertion follows essentially from \cite[Theorem 2.2]{WJ} and Theorem \ref{th2}. For the readers' convenience we
repeat the relevant part of the argument from \cite[Theorem 2.2]{WJ}. For $x=(x_1,\ldots,x_d)$ and $r>0$, write
$$
    Q(x,r)
    :=\Big\{z=(z_1,\ldots,z_d)\in\R^d\::\: |z_j-x_j|\le r \text{\ for\ } 1\le j\le d\Big\}.
$$
For any $x\in\R^d$ and $r>0$, define
$$
    g(y)
    =g_x(y):=
    \begin{cases}
        \displaystyle r^{2d}, & \text{if\ \ } y=x,\\[\bigskipamount]
        \displaystyle \prod_{j=1}^d\Bigl(\frac{\sin r (y_j-x_j)}{y_j-x_j}\Bigr)^2, & \text{if\ \ } y\neq x.
    \end{cases}
$$
Then, $g\in B_b(\R^d)\cap L^1(\R^d)$ and
$$
    \hat{g}(\xi)
    =(8\pi)^{-d}e^{-i\langle x,\xi\rangle}\biggl(\bigotimes_{j=1}^d{\I}_{[-r, r]}*\bigotimes_{j=1}^d{\I}_{[-r, r]}\biggr)(\xi),
$$
cf.\ \cite[Table 3.5.19, Page 117, Vol.\ 1]{jac-book}. In particular, $\hat{g}\in L^1(\R^d)$. According to the proof of
\cite[Theorem 1.1]{J1}, see also \cite[Remark (B), Page 65]{J1}, \eqref{char2} holds for the test function $g$. That is,
$$
    T_tg(x)=\int e^{i\langle x,\xi\rangle}\hat{g}(\xi)\lambda_t(x,\xi)\,d\xi.
$$

Since $\big|\frac{\sin r}{r}\big|\ge {1}/{2}$ for $|r|\le \pi/3$, we know that $g(y)\ge (4^{-1}r)^{2d}$ for all $y\in Q(x,{\pi}/{(3r)})$. For $s>0$, write $X_s=(X_s^1,\ldots,X_s^d)$. By monotone convergence,
\begin{align*}
    (4^{-1}r)^{2d}\, &\Ee^{x}\biggl(\int_0^\infty \bigotimes_{j=1}^d{\I}_{[x_j-\pi/(3r),\, x_j+\pi/(3r)]}(X_s^j)\,ds\biggr)\\
    &= \lim_{\alpha\rightarrow0}\Ee^{x}\biggl(\int_0^\infty e^{-\alpha t} (4^{-1}r)^{2d} \bigotimes_{j=1}^d{\I}_{[x_j-\pi/(3r),\, x_j+\pi/(3r)]}(X^j_s)\,ds\biggr)\\
    &\le \lim_{\alpha\rightarrow0} \int_0^\infty e^{-\alpha t}T_t g(x)\,dt\\
    &= \lim_{\alpha\rightarrow0}\int_0^\infty e^{-\alpha t}\,dt\int e^{i\langle x, \xi\rangle}{\hat{g}(\xi)}\lambda_t(x,\xi)\,d\xi\\
    &= (8\pi)^{-d}\lim_{\alpha\rightarrow0}\int_0^\infty e^{-\alpha t}\int \biggl(\bigotimes_{j=1}^d{\I}_{[-r, r]}*\bigotimes_{j=1}^d{\I}_{[-r, r]}\biggr)(\xi)\,\lambda_t(x,\xi)\,d\xi\,dt\\
    &= (8\pi)^{-d}\lim_{\alpha\rightarrow0}\int_0^\infty e^{-\alpha t}\int \biggl(\bigotimes_{j=1}^d{\I}_{[-r, r]}*\bigotimes_{j=1}^d{\I}_{[-r, r]}\biggr)(\xi)\,\Re\lambda_t(x,\xi)\,d\xi\,dt\\
    &\le(8\pi)^{-d}\lim_{\alpha\rightarrow0}\int_0^\infty e^{-\alpha t}\int \biggl(\bigotimes_{j=1}^d{\I}_{[-r, r]}*\bigotimes_{j=1}^d{\I}_{[-r, r]}\biggr)(\xi)\,|\Re\lambda_t(x,\xi)|\,d\xi\,dt.
\end{align*}
In the penultimate line we have used that the function $\bigotimes_{j=1}^d{\I}_{[-r, r]}*\bigotimes_{j=1}^d{\I}_{[-r, r]}$ is symmetric. Note that for all $\xi\in\R^d$
$$
    \biggl(\bigotimes_{j=1}^d{\I}_{[-r,r]}*\bigotimes_{j=1}^d{\I}_{[-r, r]}\biggr)(\xi)
    \le (2r)^d\biggl(\bigotimes_{j=1}^d{\I}_{[-2r, 2r]}\biggr)(\xi)
    \leq (2r)^d \I_{Q(0,2r)}(\xi).
$$
This inequality and \eqref{th21} give
\begin{align*}
    \Bigl(\frac{\pi r}{4}\Bigr)^{d} &\Ee^{x}\biggl(\int_0^\infty \bigotimes_{j=1}^d{\I}_{[x_j-\pi/(3r),\, x_j+\pi/(3r)]}(X_s^j)\,ds\biggr)\\
    &\le \lim_{\alpha\rightarrow0}\int_0^\infty e^{-\alpha t} \int_{Q(0,2r)} |\Re\lambda_t(x,\xi)|\,d\xi\,dt\\
    &\le\int_0^\infty\int_{Q(0,2r)}|\Re\lambda_t(x,\xi)|\,d\xi  \, dt\\
    &\le \int_0^\infty \int_{\{|\xi|\le 2r\sqrt{d}\}} |\lambda_t(x,\xi)|\,d\xi\, dt\\
    &\le \int_0^\infty \int_{\{|\xi|\le 2r\sqrt{d}\}} \exp \biggl(-{\frac{t}{16}}\inf_{z\in\R^d}\Re p(z,2\xi)\biggr)\,d\xi\, dt \\
    &=16\int_{\{|\xi|\le 2r\sqrt{d}\}}\frac{d\xi}{\inf_{z\in\R^d}\Re p(z,2\xi)}\,.
\end{align*}
Therefore, for any $r>0$,
$$
    \Ee^{x}\Bigl(\int_0^\infty \I_{Q(x,\pi/(3r))}(X_s)\,ds\Bigr)
    \le\frac{4^{d+2}}{(\pi r)^d}\int_{\{|\xi|\le 2r\sqrt{d}\}}\frac{d\xi}{\inf_{z\in\R^d}\Re p(z,2\xi)}\,.
$$
Since $r>0$ is arbitrary, the assertion follows because of \eqref{pro12}.

 \smallskip\noindent
    \textup{\bfseries (iii)}
Our proof follows Berman's argument, see \cite[Chapter V, Theorem 1.1 (1), Page 126]{Ber} and \cite[Section 3]{Berm}. The occupation measure $\mu_t$ of the time interval $[0,t]$, $t>0$, is defined through the relation
$$
    \int_{\R^d} f(x)\,\mu_t(dx)=\int_0^t f(X_s)\,ds
    \qquad\text{for all}\quad f\in B_b(\R^d), \; f\geq 0.
$$
Define the measure $\mu$ by
$$
    \mu(dx) := \int_0^\infty e^{-t}\,\mu_t(dx)\,dt
$$
in the vague topology of measures. In particular, each $\mu_t$ is absolutely continuous with respect to $\mu$ with a density bounded from above by $e^t$. We claim that
\begin{equation}\label{local1}
    \int_{\R^d} \Ee^x\big(|\widehat{\mu}(\xi)|^2\big)\,d\xi
    <\infty
    \qquad\text{for every\ \ }x\in\R^d.
\end{equation}
Using Fubini's theorem and then Plancherel's theorem we conclude that, almost surely, $\mu$ has a square-integrable density $\frac{d\mu}{dx}$ with respect to $dx \otimes d\Pp$. By the definition of $\mu_t$ and $\mu$, the local time of the process is just $L(x,t)=e^{t} \times\frac{ d\mu}{dx}$ for all $ x\in\R^d$ and $t\ge0$, and so the required assertion follows.

All that remains to be done is to establish \eqref{local1}. From the very definition of $\mu$, one has
\begin{align*}
    &\Ee^x\big(|\widehat{\mu}(\xi)|^2\big) \;=\; \Ee^x\big(\widehat{\mu}(\xi)\widehat{\mu}(-\xi)\big)\\
    &=\Ee^x\biggl[\biggl(\int_0^\infty e^{-s} e^{i\langle X_s,\xi\rangle}\,ds\biggr)\biggl(\int_0^\infty e^{-t} e^{-i\langle X_t,\xi\rangle}\,dt\biggr)\biggr]\\
    &=\Ee^x\biggl(\int_0^\infty \int_0^\infty e^{-(s+t)} e^{i \langle X_s-X_t,\xi\rangle}\,ds\,dt\biggr)\\
    &=\Ee^x\biggl(\int_0^\infty \int_t^\infty e^{-(s+t)} e^{i \langle X_s-X_t,\xi\rangle}\,ds\,dt\biggr)
        + \Ee^x\biggl(\int_0^\infty \int_0^t e^{-(s+t)} e^{i \langle X_s-X_t,\xi\rangle}\,ds\,dt\biggr)\\
    &=\Ee^x\biggl(\int_0^\infty \int_t^\infty e^{-(s+t)} e^{i \langle X_s-X_t,\xi\rangle}\,ds\,dt\biggr)
        + \Ee^x\biggl(\int_0^\infty \int_s^\infty e^{-(s+t)} e^{i \langle X_s-X_t,\xi\rangle}\,dt\,ds\biggr)\\
    &=2 \Ee^x\biggl(\int_0^\infty \int_t^\infty e^{-(s+t)} \Re e^{i \langle X_s-X_t,\xi\rangle}\,ds\,dt\biggr)\\
    &=2 \int_0^\infty \int_t^\infty e^{-(s+t)}\Re \Ee^x\Bigl( e^{i \langle X_s-X_t,\xi\rangle}\Bigr)\,ds\,dt\\
    &=2 \int_0^\infty \int_t^\infty e^{-(s+t)} \Ee^x \left(\Re \Ee^{y}\Bigl( e^{i \langle X_{s-t}-y,\xi\rangle}\Bigr) \Big|_{y=X_t}\right)\,ds\,dt.
\end{align*}
In the last step we have used the Markov property. From \eqref{th21} we conclude that
\begin{align*}
    \Ee^x\big(|\widehat{\mu}(\xi)|^2\big)
    &\le 2 \int_0^\infty \int_t^\infty e^{-(s+t)}\sup_{z\in\R^d}\Big|\Re \lambda_{s-t} (z,\xi)\Big|\,ds\,dt\\
     &\le 2 \int_0^\infty \int_t^\infty e^{-(s+t)}\sup_{z\in\R^d}\Big|\lambda_{s-t} (z,\xi)\Big|\,ds\,dt\\
    &\le 2\int_0^\infty \int_t^\infty e^{-(s+t)- \frac1{16}(s-t)\inf_{z\in\R^d}\Re p(z,\xi)}\,ds\,dt\\
    &=2\int_0^\infty \int_t^\infty e^{-(s-t)-\frac1{16}(s-t)\inf_{z\in\R^d}\Re p(z,\xi)}\,ds\, e^{-2t}\,dt\\
    &=\frac{16}{16+\inf_{z\in\R^d}\Re p(z,\xi)}\,.
\end{align*}
This estimate and the assumption \eqref{local} show that \eqref{local1} holds.
\end{proof}

\begin{remark}\label{transientremark}
\textup{\bfseries (i)} A close inspection of the proofs of Theorem \ref{pro1} (ii) and (iii) shows that the transience and the existence of local times for a Feller process only depend on $\Re \lambda_t(x,\xi)$, i.e.\ the real part of the characteristic function. This is familiar from the theory of L\'{e}vy processes.

\smallskip\noindent
\textup{\bfseries (ii)}
If for every $x\in\R^d$ the symbol $\Re p(x,\xi)$ is a function of $|\xi|$ which is unbounded in $\xi$, i.e.\ if for every $x\in\R^d$, $\Re p(x,\xi)=\Re p(x,|\xi|)$ and
$$
    \varliminf_{|\xi|\to \infty}\Re p(x,|\xi|)=\infty,
$$
then we can replace the condition `for every $r>0$' in \eqref{pro12} by `for some $r>0$'.

This can be seen from the following argument: first,
$$
    \int_{\{|\xi|\le r\}}\frac{d\xi}{\inf_{z\in\R^d}\Re p(z,\xi)} =\infty
$$
is equivalent to saying that$$
    \int_{\{|\xi|\le r\}}\frac{d\xi}{\sup_{z\in\R^d}\big(-\Re p(z,\xi))} =-\infty.
$$
Now, if there exists $r_0>0$ such that
$$
    \int_{\{|\xi|\le r_0\}}\frac{d\xi}{\inf_{z\in\R^d} \Re p(z,\xi)}<\infty
$$
and
$$
    \int_{\{|\xi|\le r\}}\frac{d\xi}{\inf_{z\in\R^d} \Re p(z,\xi)} =\infty
    \qquad\text{for all\ \ }r>r_0.
$$
Then,
$$
    \int_{\{|\xi|\le r_0\}}\frac{d\xi}{\sup_{z\in\R^d} \big(- \Re p(z,\xi)\big)} <-\infty
$$
and
$$
    \int_{\{|\xi|\le r\}}\frac{d\xi}{\sup_{z\in\R^d}  \big(- \Re p(z,\xi)\big)} =-\infty
    \qquad\text{for all\ \ }r>r_0.
$$
Hence,
$$
    \int_{\{r_0\le |\xi|\le r\}}\frac{d\xi}{\sup_{z\in\R^d} \big(- \Re p(z,\xi)\big)} =-\infty
    \quad\text{for all\ \ } r>r_0.
$$
Thus, there exists a sequence $(\xi_n)_{n\ge1}\subset\{z\in\R^d: r_0\le |z|\le r\}$ such that $$\lim_{n\to \infty} \sup_{z\in\R^d} \Re p(z,\xi_n)=0.$$ In particular, for all $x\in\R^d$, $\lim\limits_{n\to \infty} \Re p(x,\xi_n)=0$.

By compactness, there is a subsequence $(\xi'_n)_{n\ge1}$ of $(\xi_n)_{n\ge1}$ such that $\lim_{n\to \infty}\xi'_n=\xi_0$. Since the function $\xi\mapsto \Re p(x,\xi)$ is continuous for any fixed $x\in\R^d$, we get that
$ \Re p(x,\xi_0)=0$ for every $x\in\R^d$. Thus, for any $x\in\R^d$ and $\eta\in \R^d$ with $|\eta|=|\xi_0|$, $\Re p(x,\eta)=0$. Since $\sqrt{\Re p(x,\cdot)}$ is subadditive, $\xi\mapsto \Re p(x,\xi)$ is periodic. Because of \eqref{assumption}, there is a constant $C=C(\xi_0)$ such that
$$
    \sup_{x,\xi\in\R^d}\Re p(x,\xi)\le C
$$
which cannot be the case since $\Re p(x,\xi)$ is unbounded.
\end{remark}

\subsection{Examples}
\subsubsection*{{\bfseries Feller Processes with Real Symbol Obtained by Variable Order Subordination}}
Let $\psi$ be a real-valued negative definite function on $\R^d$ such that $\psi(0)=0$. Let $f:\R^d\times[0,\infty)\to [0,\infty)$ be a measurable function such that $\sup_x f(x,s)\le c (1+s)$ for some constant $c>0$, and for fixed $x\in\R^d$ the function $s\mapsto f(x,s)$ is a Bernstein function with $f(x,0)=0$. Bernstein functions are the characteristic Laplace exponents of subordinators; our standard reference is the monograph \cite{ssv}. Then,
$$
    q(x,\xi):=f(x,\psi(\xi))
$$
is a real-valued symbol satisfying \eqref{assumption}. Since $f(x,s)=s^{r(x)}$ where $r:\R^d\to[0,1]$ is a possible choice for $f$, this class includes symbols describing variable (fractional) order of differentiation or variable order fractional powers. We refer to \cite{EJ} and the references therein for more details on Feller semigroups obtained by variable order subordination. According to Theorem \ref{pro1} and Remark \ref{transientremark}, we see

\begin{corollary}
Let $(X_t)_{t\geq 0}$ be a Feller process with the symbol $q(x,\xi) = f(x,\psi(\xi))$ above. Set $f_0(s):=\inf_{x\in\R^d} f(x,s)$ for $s\in[0,\infty)$. Then, we have

\smallskip\noindent
\textup{\bfseries (i)}
If
$$
    \lim_{|\xi|\to \infty}\frac{ f_0(\psi(\xi))}{\log (1+|\xi|)}=\infty,
$$
then the corresponding Feller semigroup $(T_t)_{t\ge0}$ is ultracontractive and has the strong Feller property.

\smallskip\noindent
\textup{\bfseries (ii)}
If
$$
    \int_{\{|\xi|\le r\}}\frac{d\xi}{f_0(\psi(\xi))} <\infty
    \qquad\text{for every\ \ }r>0,
$$
then the Feller process $(X_t)_{t\ge0}$ is transient.

\smallskip\noindent
\textup{\bfseries (iii)}
If
$$
    \int\frac{d\xi}{1+f_0(\psi(\xi))} <\infty,
$$
then the Feller process $(X_t)_{t\ge0}$ has local times.

\smallskip

If the symbol $\psi(\xi)$ only depends on $|\xi|$, i.e.\ if $\psi(\xi) = \phi(|\xi|)$ for some function $\phi$, then it is enough to assume that the condition in \textup{(ii)} holds for some $r>0$.
\end{corollary}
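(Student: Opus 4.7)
The plan is to reduce the corollary to a direct application of Theorem \ref{pro1}, together with Remark \ref{transientremark}(ii) for the last clause, after one preliminary identification of the quantity $\inf_{z\in\R^d}\Re q(z,\xi)$ in terms of $f_0$ and $\psi$.

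First I would record the key identification. Because $\psi$ is real-valued and each $s\mapsto f(x,s)$ maps $[0,\infty)$ into $[0,\infty)$, the composition $q(x,\xi)=f(x,\psi(\xi))$ is real and nonnegative, so $\Re q(z,\xi) = q(z,\xi) = f(z,\psi(\xi))$, and taking the infimum over $z$ yields
$$
    \inf_{z\in\R^d} \Re q(z,\xi) \;=\; f_0(\psi(\xi)).
$$
Next I would verify that \eqref{assumption} is in force: the bound $\|q(\cdot,\xi)\|_\infty \le c'(1+|\xi|^2)$ follows by combining the linear growth $\sup_x f(x,s)\le c(1+s)$ with the standard quadratic estimate $\psi(\xi)\le C(1+|\xi|^2)$ valid for any continuous negative definite function with $\psi(0)=0$, and the normalization $q(\cdot,0)\equiv 0$ is immediate from $\psi(0)=0$ and $f(x,0)=0$.

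With these two preliminaries, parts (i), (ii) and (iii) of the corollary become verbatim translations of \eqref{pro11}, \eqref{pro12} and \eqref{local} after substituting $f_0(\psi(\xi))$ for $\inf_z\Re p(z,\xi)$; ultracontractivity (and the strong Feller property recorded as a byproduct in Theorem \ref{pro1}(i)), transience, and existence of $L^2$-local times follow with no further work. This is essentially bookkeeping and not where the content lies.

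For the last assertion I would invoke Remark \ref{transientremark}(ii). If $\psi(\xi)=\phi(|\xi|)$, then $q(x,\xi)=f(x,\phi(|\xi|))$ is a function of $|\xi|$ only, so the structural hypothesis of that remark is met. The one subtlety I expect to watch is the accompanying unboundedness requirement $\varliminf_{|\xi|\to\infty}\Re q(x,|\xi|)=\infty$: it is the only nontrivial ingredient, and it must be argued either from growth properties of $\phi$ (typically $\phi(r)\to\infty$ as $r\to\infty$, as for $\psi(\xi)=|\xi|^2$) together with the Bernstein function $f(x,\cdot)$ being non-constant on a uniform scale, or extracted from the finiteness hypothesis in (ii) itself. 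In the canonical examples one has in mind, such as $f(x,s)=s^{r(x)}$ with $\inf_x r(x)>0$ combined with $\psi(\xi)=|\xi|^2$, this unboundedness is automatic and the strengthened transience criterion applies directly; apart from this single point of care, there is no genuine obstacle.
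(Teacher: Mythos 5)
Your proposal is correct and follows the same route as the paper: identify $\inf_{z}\Re q(z,\xi)=f_0(\psi(\xi))$ using the fact that $q$ is real and nonnegative, check \eqref{assumption} via $\sup_x f(x,s)\le c(1+s)$ and the standard quadratic bound on the negative definite function $\psi$, then read off (i)--(iii) as translations of \eqref{pro11}, \eqref{pro12}, \eqref{local}, and appeal to Remark \ref{transientremark}(ii) for the final clause. The paper gives no more than a one-line justification, so you have filled in exactly the bookkeeping it elides.

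The one place where you go beyond the paper is a point worth making explicit: Remark \ref{transientremark}(ii) requires not only that the symbol be radial but also that it satisfy $\varliminf_{|\xi|\to\infty}\Re q(x,|\xi|)=\infty$ for each $x$. This is \emph{not} an automatic consequence of the hypotheses on $f$ and $\psi$ in this class: Bernstein functions may be bounded (e.g.\ $f(x,s)=1-e^{-s}$ satisfies the growth bound $\sup_x f(x,s)\le c(1+s)$), and $\phi$ need not tend to $\infty$. The corollary as stated in the paper silently assumes this unboundedness, while you correctly flag it as the only nontrivial verification. Your suggestion that the unboundedness could sometimes be extracted from the finiteness of $\int_{|\xi|\le r_0} f_0(\psi(\xi))^{-1}\,d\xi$ alone is, however, not quite right either: that finiteness controls the behaviour of $f_0\circ\psi$ near the origin, not its growth as $|\xi|\to\infty$. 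So the cleanest way to close the gap is simply to add the standing hypothesis that $\phi(r)\to\infty$ and $f(x,\cdot)$ is unbounded uniformly in $x$ (or at least $f_0(s)\to\infty$ as $s\to\infty$), which holds in the motivating examples $f(x,s)=s^{r(x)}$ with $\inf_x r(x)>0$ and $\psi(\xi)=|\xi|^2$. Apart from this point, which reflects an imprecision in the paper rather than in your argument, the proof is complete.
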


\subsubsection*{{\bfseries Rich Bass' Stable-like Processes}}
A stable-like process on $\R^d$ is a Feller process, whose generator
has the same form as that of a rotationally symmetric stable
L\'{e}vy motion, but the index of `stability' depends on the state space,
see \cite{BAss}. The infinitesimal generator is of the form
$$
    L^{(\alpha)}u(x)
    =\int_{z\neq 0} \Bigl(u(x+z)-u(x)-\langle\nabla u(x),z\rangle\I_{\{|z|\le1\}}\Bigr)\,\frac{C_{\alpha(x)}}{|z|^{d+\alpha(x)}}\,dz,
    \qquad u\in C_b^2(\R^d),
$$
where $0<\alpha(x)<2$ and $C_{\alpha(x)}$ is a constant defined through the L\'{e}vy-Khintchine formula
$$
    |\xi|^{\alpha(x)}
    =C_{\alpha(x)}\int_{z\neq 0} \Bigl(1-\cos \langle \xi, z\rangle\Bigr)\,\frac{dz}{|z|^{d+\alpha(x)}},
$$
i.e.\
$$
    C_{\alpha(x)}
    =\alpha(x)2^{\alpha(x)-1}\Gamma\big((\alpha(x)+d)/2\big)\Big/\Bigl(\pi^{d/2}\Gamma\big(1-\alpha(x)/2\big)\Bigr),
$$
see \cite[Exercise 18.23, Page 184]{BF}. In other words, the operator $L^{(\alpha)}$ can be regarded as a pseudo differential operator of variable order with symbol $|\xi|^{\alpha(x)}$, i.e.\ $L^{(\alpha)}=-(-\Delta)^{\alpha(x)/2}$.

\begin{theorem}\label{strong}
Assume that $\alpha(x)\in C_b^1(\R^d)$ such that $0<\underline{\alpha}=\inf \alpha \le \alpha(x)\le \sup \alpha=\overline{\alpha}<2$. Then, there exists a Feller process $(X_t)_{t\geq 0}$ (which we call stable-like process in the sense of R.\ Bass) having the symbol $|\xi|^{\alpha(x)}$, such that the following statements hold.

\smallskip\noindent
\textup{\bfseries (i)}
The Feller semigroup $(T_t)_{t\ge0}$ of $(X_t)_{t\ge0}$ has the strong Feller property, and the transition probability $P(t,x,dy)$ of $(X_t)_{t\ge0}$ has a density function $p(t,x,y)$ with respect to Lebesgue measure; moreover
$$
    \sup_{x,y\in\R^d} p(t,x,y)\le \begin{cases}
        C \, t^{-d/\underline{\alpha}}
        &\text{for small\ \ }t\ll 1;\\[\bigskipamount]
        C \, t^{-d/\overline{\alpha}}
        &\text{for large\ \ }t\gg 1.
    \end{cases}
$$

\smallskip\noindent
\textup{\bfseries (ii)}
If $d\ge 2$, then the process $(X_t)_{t\ge0}$ is transient.

\smallskip\noindent
\textup{\bfseries (iii)}
If $d=1$ and $\sup_{|x|\ge K}\alpha(x)\in (0,1)$ for some constant $K>0$, then the process $(X_t)_{t\ge0}$ is transient.

\smallskip\noindent
\textup{\bfseries (iv)}
If $d=1$ and $\inf_{x\in\R}\alpha(x)\in (1,2)$, the process $(X_t)_{t\ge0}$ has local times.
 \end{theorem}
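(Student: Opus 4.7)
The plan is to reduce each of the four parts to an application of Theorem \ref{pro1} together with Remark \ref{transientremark}(ii), once the existence of the stable-like Feller process is secured. For existence I would invoke Bass's construction \cite{BAss}: under $\alpha\in C_b^1(\R^d)$ with $0<\underline{\alpha}\le\alpha(x)\le\overline{\alpha}<2$, the martingale problem for $p(x,\xi)=|\xi|^{\alpha(x)}$ is well posed, and the resulting Feller process has $C_c^\infty(\R^d)\subset D(A)$ with a real symbol satisfying \eqref{assumption} that is radial in $\xi$ for each fixed $x$. A direct computation gives
$$
    \inf_{z\in\R^d}|\xi|^{\alpha(z)}
    =\begin{cases}|\xi|^{\overline{\alpha}}, & |\xi|\le 1,\\ |\xi|^{\underline{\alpha}}, & |\xi|\ge 1.\end{cases}
$$
In particular $\inf_z|\xi|^{\alpha(z)}/\log(1+|\xi|)\to\infty$ as $|\xi|\to\infty$, so \eqref{pro11} holds and Theorem \ref{pro1}(i) delivers ultracontractivity, the strong Feller property, and the density bound $\sup_{x,y}p(t,x,y)\le (4\pi)^{-d}\int e^{-(t/16)\inf_z|\xi|^{\alpha(z)}}\,d\xi$. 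To extract the claimed scaling I would split this integral at $|\xi|=1$ and rescale $\xi=(16/t)^{1/\underline{\alpha}}\eta$ on the high-frequency piece and $\xi=(16/t)^{1/\overline{\alpha}}\eta$ on the low-frequency piece: for $t\ll 1$ the high-frequency piece dominates and yields $t^{-d/\underline{\alpha}}$, while for $t\gg 1$ the high-frequency piece is exponentially small and the low-frequency piece dominates, giving $t^{-d/\overline{\alpha}}$.

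Parts (ii) and (iv) then follow directly from Theorem \ref{pro1} in combination with Remark \ref{transientremark}(ii), which applies because $|\xi|^{\alpha(x)}$ is a radial function of $\xi$ for each fixed $x$ and $\inf_z|\xi|^{\alpha(z)}$ is unbounded as $|\xi|\to\infty$. For (ii), I take $r=1$ in \eqref{pro12} and compute $\int_{|\xi|\le 1}|\xi|^{-\overline{\alpha}}\,d\xi=c_d\int_0^1 s^{d-1-\overline{\alpha}}\,ds<\infty$, which is automatic for $d\ge 2$ since $\overline{\alpha}<2$. For (iv) with $d=1$ and $\underline{\alpha}>1$, I split the integral in \eqref{local} at $|\xi|=1$: the low-frequency contribution $\int_{|\xi|\le 1}(1+|\xi|^{\overline{\alpha}})^{-1}\,d\xi$ is bounded by the length of $[-1,1]$, and the high-frequency contribution is dominated by $\int_{|\xi|\ge 1}|\xi|^{-\underline{\alpha}}\,d\xi$, which converges precisely when $\underline{\alpha}>1$.

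The main obstacle is (iii). Here $d=1$ but $\overline{\alpha}$ may exceed $1$, so $|\xi|^{-\overline{\alpha}}$ fails to be integrable near the origin and \eqref{pro12} fails for every $r>0$; direct application of Theorem \ref{pro1}(ii) is therefore impossible. My plan is a compact-perturbation argument: pick $\tilde{\alpha}\in C_b^1(\R)$ that agrees with $\alpha$ on $\{|x|\ge K+1\}$ and satisfies $\sup_x\tilde{\alpha}(x)=\sup_{|x|\ge K}\alpha(x)=:\alpha_\infty\in(0,1)$, and consider the stable-like process $(\tilde{X}_t)_{t\ge 0}$ with symbol $|\xi|^{\tilde{\alpha}(x)}$ built by the construction used in part (i). Because $\sup\tilde{\alpha}=\alpha_\infty<1$, the same computation as in (ii) yields $\int_{|\xi|\le 1}|\xi|^{-\alpha_\infty}\,d\xi<\infty$ in $d=1$, so Theorem \ref{pro1}(ii) shows $(\tilde{X}_t)_{t\ge 0}$ is transient. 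The delicate step — and the one I expect to require the most work — is to transfer transience from $(\tilde{X}_t)$ to $(X_t)$, exploiting that the two generators coincide outside the relatively compact set $\overline{B(0,K+1)}$. I would attempt this either through a Green-function comparison on the exterior domain, or by directly establishing $\lim_{t\to\infty}|X_t|=\infty$ almost surely via comparison of the exit and return times to $\overline{B(0,K+1)}$ for the two processes; the non-symmetry of stable-like processes makes any such stability-of-transience statement less automatic than one would wish in the self-adjoint case.
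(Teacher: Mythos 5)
Your treatment of parts (i), (ii) and (iv) is essentially the paper's own argument and is correct: the identification $\inf_z|\xi|^{\alpha(z)}=|\xi|^{\overline\alpha}$ for $|\xi|\le 1$ and $|\xi|^{\underline\alpha}$ for $|\xi|\ge 1$ is right, the rescaling of the on-diagonal bound from Theorem~\ref{pro1}(i) gives the claimed $t^{-d/\underline\alpha}$ and $t^{-d/\overline\alpha}$ behaviour, and the integral checks for \eqref{pro12} and \eqref{local} are exactly what the paper does. (One small bookkeeping remark: Bass's martingale-problem result only gives a strong Markov process with the $C_b$-Feller property; the upgrade to a genuine $C_\infty$-Feller semigroup with $C_c^\infty(\R^d)\subset D(A)$ requires Propositions~\ref{appendix3} and~\ref{prop44}, which you elide. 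This is not an error but it should not be skipped.)

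Part (iii) is where you have a real gap. You correctly identify that $\overline\alpha$ may exceed $1$ so that \eqref{pro12} fails, and you correctly identify compact perturbation of the exponent as the way out; but you leave the transfer of transience from $(\widetilde X_t)$ to $(X_t)$ as an open problem, sketching a Green-function or exit-time comparison without carrying it through. The paper fills this gap with a specific chain of facts that is worth knowing: first, Kolokoltsov's result gives strict positivity of the transition density $p(t,x,y)>0$ everywhere, hence Lebesgue irreducibility; combined with the strong Feller property from part (i), this makes the stable-like process an irreducible $T$-process, for which the Meyn--Tweedie dichotomy applies: the process is either Harris recurrent or transient, and the alternative is decided entirely by whether $\Pp^x(\sigma_{\overline{B(0,R)}}<\infty)$ equals $1$ for all $x$ or is $<1$ for some $x$, where $\sigma_{\overline{B(0,R)}}$ is the first entrance time into a fixed closed ball. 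Since this criterion depends only on the law of the process up to its first entrance into $\overline{B(0,R)}$, and since $X$ and $\widetilde X$ have the same generator (hence the same L\'evy kernel $\nu(y,dz)$) at every point $y$ outside a compact set contained in $\overline{B(0,R)}$, the two processes have the same entrance probabilities from outside, hence the same recurrence/transience classification. This is the content of \cite[Theorems~4.6--4.7]{Bjorn} and \cite[Theorem~3.3]{MTT}; without an argument at this level of precision, your compact-perturbation plan does not constitute a proof. The non-symmetry you worry about is handled precisely because the $T$-process machinery does not require reversibility.
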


Before we begin with the proof of Theorem \ref{strong}, a few words on related work on stable-like processes is appropriate.
\begin{remark}
\textup{\bfseries (i)}
Under the condition that $\alpha(\cdot)\in C_b^\infty(\R^d)$, the strong Feller property of stable-like processes has been established in \cite[Theorem 3.3]{SWSFP}. In addition to this, our result provides an upper bound for on-diagonal estimates of the heat kernel of stable-like processes. Note that a stable-like process is \emph{not} symmetric, i.e.\ Dirichlet form methods fail if we want to derive estimates as in Theorem \ref{strong} (i).

\smallskip
\noindent\textup{\bfseries (ii)}
If $\alpha (x)$ is Dini continuous and $\inf_{x\in\R}\alpha(x)\in (1,2)$, the existence of local times for stable-like processes was shown by Bass \cite[Theorem 2.1]{BAss1}. Bass' technique is different from ours.

\smallskip
\noindent\textup{\bfseries (iii)}
Recurrence and transience of a particular class of one-dimensional stable-like processes (with discontinuous exponents) have been studied in \cite{Bjorn} using an overshoot approach under the assumption that the underlying process is a \emph{Lebesgue-irreducible T-process}. Although the setting in \cite[Corollary 5.5]{Bjorn} is different from the situation here, we remark that our proof shows that a stable-like process with $\alpha(x)\in C_b^1(\R^d)$ and $0<\underline{\alpha}=\inf \alpha \le \alpha(x)\le \sup \alpha=\overline{\alpha}<2$ is a \emph{Lebesgue-irreducible T-process}.
\end{remark}

\begin{proof}[Proof of Theorem \ref{strong}]
According to \cite[Corollary 2.3]{BAss} the solution to the martingale problem for $(L^{(\alpha)}, C_0^\infty(\R^d))$ is well posed. Therefore there exists a unique strong Markov process $((X_t)_{t\ge0},(\Pp^x)_{x\in\R^d})$ for which $\Pp^x$ solves the martingale problem for $(L^{(\alpha)}, C_0^\infty(\R^d))$ at each point $x\in\R^d$. For any $t\ge0$, $x\in\R^d$ and $f\in B_b(\R^d)$, we define
$$
    T_tf(x)=\Ee^x(f(X_t)).
$$
From \cite[Propositions 6.1 and 6.2]{BAss}, we know that $(T_t)_{t\ge0}$ is a Markov semigroup which has the $C_b$-Feller property; that is, for any $t\ge0$, $T_t$ maps the set of bounded continuous functions into itself. By Proposition \ref{appendix3} we see that $T_t$ enjoys the Feller property, i.e.\ $T_t$ maps the set of continuous functions vanishing at infinity into itself. Note that the uniqueness of the solution for the martingale problem indicates that $C_c^\infty(\R^d)$ is contained in the extended domain of the operator $L^{(\alpha)}$. On the other hand, it is easy to check that, under our assumptions on the index function $\alpha$, we have $L^{(\alpha)}u\in C_\infty(\R^d)$
for any $u\in C_c^\infty(\R^d)$. Thus, Proposition \ref{prop44} shows that $C_c^\infty(\R^d)$ actually is contained in the domain of the operator $L^{(\alpha)}$.
Therefore, (i), (ii) and (iv) follow from Theorem \ref{pro1}.

To prove the assertion (iii) we need a few auxiliary results on stable-like processes. Let $P(t,x,dy)$ be the transition function of $(X_t)_{t\ge0}$ and denote its density by $p(t,x,y)$. Note that $x\mapsto C_{\alpha(x)}$ is a positive function of class $C_b^\infty(\R^d)$. From \cite[Theorem 5.1 and its Corollary, Pages 759--760]{Kolo} we know that $p(t,x,y)$ is strictly positive everywhere on $(0,\infty)\times \R^d\times \R^d$. Therefore, $(X_t)_{t\ge0}$ is Lebesgue irreducible, i.e.\ for any Borel measurable set $A$ with $\mathrm{Leb}(A)>0$ and $x\in\R^d$, $\Ee^x\big(\int_0^\infty \I_{A}(X_t)\,dt\big)>0$. Recall that, a Markov process $(X_t)_{t\ge0}$ is Harris recurrent, if for any Borel measurable set $A$ with $\mathrm{Leb}(A)>0$ and $x\in\R^d$, $\Ee^x\big(\int_0^\infty \I_{A}(X_t)\,dt\big)=\infty$. The Lebesgue irreducibility and the strong Feller property yield that the stable-like process $(X_t)_{t\ge0}$ is either Harris recurrent or transient, see e.g.\ \cite[Theorem 3.2 (a)]{MT} and \cite[Theorem 2.3]{Tw}.
Moreover, we know from \cite[Theorem 3.3]{MTT} that $(X_t)_{t\ge0}$
\begin{itemize}
\item[] is Harris recurrent if, and only if,  $\Pp^x(\sigma_{_{\overline{B(0,R)}}}<\infty)=1$ for every $x\in\R^d$;
\item[] is transient if, and only if, $\Pp^x(\sigma_{_{\overline{B(0,R)}}}<\infty)<1$ for some $x\in\R^d$,
\end{itemize}
where $\sigma_{_{\overline{B(0,R)}}}$ is the first entrance time of the process into $\overline{B(0,R)}$ and $R>0$ is any fixed radius. From these characterizations, we conclude that any two stable-like processes which coincide outside some compact set have the same (Harris) recurrence and transience behaviour, see \cite[Theorems 4.6 and 4.7]{Bjorn}.

Now we can use Theorem \ref{pro1} (ii) to infer that a one-dimensional stable-like process is transient, if $\sup_{x\in\R}\alpha(x)\in (0,1)$.
Therefore, (iii) follows from this conclusion and the remark above.
\end{proof}

\section{appendix}
Let $(X_t)_{t\ge0}$ be a Feller process with generator $(A,D(A))$ and semigroup $(T_t)_{t\ge0}$.
Let us first comment on the assumption that
\begin{equation}\label{sss}
    \emph{the test functions $C_c^\infty(\R^d)$ are contained in the domain $D(A)$}
\end{equation}
of the Feller generator $A$. Usually \eqref{sss} is not easy to verify in applications; on the other hand, we do not know many non-trivial examples of Feller processes which do not satisfy \eqref{sss}. In what follows, we will make full use of the extended domain of the Feller generator $A$, which is easier to deal with than the domain $D(A)$.

Recall that for a strong Markov process $(X_t)_{t\ge0}$ on $\R^d$ with infinitesimal generator $(A,D(A))$, the \emph{extended domain} $\widetilde{{D}}(A)$ is defined by
\begin{align*}
    \widetilde{D}(A)=&\Bigg\{u\in B(\R^d)\::\: \text{\ there is a measurable function\ \ } g \text{\ \ such that}\\
    &\qquad \biggl(u(X_t)-\int_0^t g(X_s)\,ds,\,\,\mathscr{F}_t\biggr)_{t\geq 0}\text{\ is a local martingale under\ } \Pp^x\Bigg\},
\end{align*}
where $\mathscr{F}_t := \sigma(X_s:s\le t)$ is the natural filtration of the process $(X_t)_{t\geq 0}$, and $B(\R^d)$ is the space of Borel measurable functions on $\R^d$. The function $g$ appearing in the definition of $\widetilde D(A)$ need not be unique, cf.\ \cite[Chapter 1, Page 24]{EK}. If, however, $Au$ can be defined, $g=Au$ is admissible; in particular, $D(A)\subset\widetilde D(A)$. Conversely, we can use the situation where $g$ is unique to extend the operator $(A,D(A))$. The concept of extended domain is similar to the full generator for a contraction semigroup in \cite[Chapter 1, Pages 23--24]{EK}. 

For a Feller generator $(A,D(A))$ such that $C_c^\infty(\R^d)  \subset D(A)$ one has $C_c^\infty(\R^d)\subset \widetilde{{D}}(A)$, see \cite[Chapter 4, Proposition 1.7]{EK} and \cite[Lemma 2.3 and Corollary 3.6]{S3}; on the other hand, the condition $C_c^\infty(\R^d) \subset D(A)$ along with the assumption \eqref{assumption} implies that $C_c^\infty(\R^d)\subset C_b^2(\R^d)\subset \widetilde{{D}}(A)$, see Proposition \ref{appendix1} below for the simple proof of the assertion that $C_b^\infty(\R^d)\subset \widetilde{{D}}(A)$, where $C_b^\infty(\R^d)$ is the space of arbitrarily often differentiable functions such that the function and its derivatives are bounded. Conversely, we have

\begin{proposition}\label{prop44}
    Let $(X_t)_{t\ge0}$ be a Feller process with generator $(A,D(A))$. Suppose that $C_c^\infty(\R^d)\subset\widetilde{D}(A)$, and that for any $u\in C_c^\infty(\R^d)$ there is (an extension of $A$) such that $Au$ is well-defined and in $C_\infty(\R^d)$, the space of continuous functions vanishing at infinity. If the process $(X_t)_{t\ge0}$ is conservative, then $C_c^\infty(\R^d)\subset D(A)$.
    \end{proposition}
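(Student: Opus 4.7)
The plan is the following. For $u\in C_c^\infty(\R^d)$, the assumption $u\in \widetilde{D}(A)$ means that the process
$$M_t^u := u(X_t)-\int_0^t Au(X_s)\,ds$$
is a local martingale under every $\Pp^x$. Since $u$ has compact support and $Au\in C_\infty(\R^d)$, both $u$ and $Au$ are bounded, so $M^u$ is bounded on each finite time interval and hence is a true martingale. Taking expectations under $\Pp^x$ and applying Fubini's theorem, we obtain the Dynkin-type identity
$$T_t u(x)-u(x) = \int_0^t T_sAu(x)\,ds,\qquad x\in\R^d,\ t\ge 0.$$
The conservativeness of $(X_t)_{t\ge 0}$ enters here to ensure that there is no defect from the lifetime of the process, so the right-hand side really is the integral of $T_s Au$ over $[0,t]$ without any correction term coming from the cemetery point.

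With this identity in hand, uniform convergence of the difference quotient as $t\to 0$ is an immediate consequence of the strong continuity of the Feller semigroup $(T_t)_{t\ge 0}$ on $C_\infty(\R^d)$: since $Au\in C_\infty(\R^d)$, the map $s\mapsto T_s Au$ is continuous in the uniform norm, and therefore
$$\Big\|\frac{T_tu-u}{t}-Au\Big\|_\infty = \Big\|\frac{1}{t}\int_0^t \bigl(T_sAu-Au\bigr)\,ds\Big\|_\infty \le \sup_{0\le s\le t}\|T_sAu-Au\|_\infty \xrightarrow{t\to 0} 0.$$
This shows that $u\in D(A)$ and that the Feller generator $A$ applied to $u$ coincides with the action of the extension. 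Hence $C_c^\infty(\R^d)\subset D(A)$ as claimed.

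The delicate step is passing from the local martingale property to the Dynkin-type identity above: strictly speaking, we need the true-martingale statement $\Ee^x M_t^u = u(x)$. This is where boundedness of $u$ and $Au$ meets conservativeness. Taking a localizing sequence of stopping times $(\tau_n)$ with $\tau_n \uparrow \infty$, the identity $\Ee^x M_{t\wedge \tau_n}^u = u(x)$ holds for every $n$; uniform boundedness of the integrands and dominated convergence then let us pass to the limit. If the process were not conservative one would have to account for possible mass loss to the cemetery on $[0,t]$, which would produce an extra boundary term and break the clean identification of $Au$ as the uniform limit of the difference quotient.
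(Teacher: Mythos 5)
Your proof is correct, and in the final step it actually takes a cleaner route than the paper. Both proofs establish the same Dynkin-type identity $T_t u(x)-u(x)=\int_0^t T_s Au(x)\,ds$ by localizing the martingale, using boundedness of $u$ and $Au$ together with conservativeness to pass to the unlocalized identity and then taking expectations. At that point the paths diverge: the paper only extracts \emph{pointwise} convergence of the difference quotient $(T_tu-u)/t\to Au$, concludes that $u$ is in the domain of the \emph{weak} infinitesimal generator, and then invokes a nontrivial identification result (Sato, Lemma 31.7) to conclude that the weak generator coincides with the Feller generator. You instead observe that since $Au\in C_\infty(\R^d)$, the strong continuity of the Feller semigroup on $C_\infty(\R^d)$ makes $s\mapsto T_sAu$ continuous in the \emph{uniform} norm, so the estimate
\[
\Big\|\tfrac{T_tu-u}{t}-Au\Big\|_\infty \le \sup_{0\le s\le t}\|T_sAu-Au\|_\infty \longrightarrow 0
\]
gives convergence in the uniform norm directly, which is precisely the definition of $u\in D(A)$. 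This bypasses the weak-generator detour entirely and is more self-contained. Note, for completeness, that the right-hand side of the Dynkin identity is a genuine Bochner integral in $C_\infty(\R^d)$ (by continuity and boundedness of $s\mapsto T_sAu$), so the norm estimate is legitimate. Your remark on the role of conservativeness — preventing a boundary defect term from the lifetime — is accurate and matches the role it plays in the paper's proof, where it is used to send the exit times $\tau_{B(x,r)}\to\infty$.
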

    \begin{proof}
    The Feller semigroup $(T_t)_{t\ge0}$ has a unique extension on $B_b(\R^d)$ (the space of the bounded Borel measurable functions), cf.\ \cite[Section 3]{S1}. For simplicity, we still denote by $(T_t)_{t\ge0}$ this extension. Since the process $(X_t)_{t\ge0}$ is conservative, $T_t1=1$ for every $t\ge0$. According to \cite[Corollary 3.4]{S1}, $t\mapsto T_tu$ is for all $u\in C_b(\R^d)$ continuous with respect to locally uniform convergence.

    Let $\tau_{_{{B(x,r)}}}$ be the first exit time of the process from the open ball ${B(x,r)}$. Since $C_c^\infty(\R^d)\subset\widetilde{D}(A)$, for any $x\in\R^d$, $r>0$ and $u\in C_c^\infty(\R^d)$,
$$
    \Ee^x\biggl(u(X_{t\wedge \tau_{_{B(x,r)}}})-\int_0^{t\wedge \tau_{_{B(x,r)}}}Au(X_{s})\,ds\biggr)
    =u(x).
$$
Since $(X_t)_{t\geq 0}$ is conservative, $\tau_{_{B(x,r)}}\xrightarrow{r\to\infty} \infty$. Thus, we can use the dominated convergence theorem to find that for all $u\in C_c^\infty(\R^d)$
$$
    \Ee^x\biggl(u(X_t)-\int_0^tAu(X_{s})\,ds\biggr)=u(x).
$$

Pick $x\in\R^d$; by the continuity of $t\mapsto T_t(Au)(x)$,
\begin{align*}
    \lim_{t\to 0} \frac{{\Ee}^x\Bigl(u\big({X}_{_{t}}\big)-u(x)\Bigr)}{t}
    =\lim_{t\to 0}\frac{1}{t}\bigg(\int_0^tT_s(Au)(x)\,ds\bigg)
    =Au(x).
\end{align*}
Thus, $u$ belongs to the domain of the weak infinitesimal generator of the process $X_t$. The required assertion follows from \cite[Lemma 31.7, Page 209]{SA}.
\end{proof}

Next, we present a consequence of the assumption $C_c^\infty(\R^d)\subset D(A)$ for Feller processes.

\begin{proposition}\label{appendix1}
Let $(X_t)_{t\ge0}$ be a Feller process with generator $(A,D(A))$ and semigroup $(T_t)_{t\ge0}$. Assume that $C_c^\infty(\R^d)\subset D(A)$ so that $A|_{C_c^\infty(\R^d)}$ is a pseudo differential operator $-p(\cdot,D)$ with symbol $p(x,\xi)$. If \eqref{assumption} is satisfied, then
\begin{equation*}
    T_te_{\xi}(x)=e_{\xi}(x)+\int_0^t T_s A e_{\xi}(x) \,ds
\end{equation*}
holds for all $t>0$ and $x,\xi\in\R^d$, where $e_{\xi}(x)=e^{i\langle \xi, x\rangle}$.
\end{proposition}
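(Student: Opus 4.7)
The plan is to approximate $e_\xi$ by test functions, apply the standard Dynkin formula on the Feller domain, and then pass to the limit. Pick a cutoff sequence $\chi_n\in C_c^\infty(\R^d)$ with $\chi_n\equiv 1$ on $B(0,n)$, $0\le\chi_n\le 1$, and derivatives $\partial^\alpha\chi_n$ bounded in $n$ for every multi-index $\alpha$. Then $u_n:=\chi_n e_\xi$ lies in $C_c^\infty(\R^d)\subset D(A)$, so the classical Dynkin identity
\begin{equation}\label{planeq}
    T_tu_n(x)=u_n(x)+\int_0^t T_s(Au_n)(x)\,ds,\qquad t>0,\ x\in\R^d,
\end{equation}
holds as a consequence of $\frac{d}{dt}T_tu_n=T_tAu_n$ on $D(A)$.

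Next I would give meaning to $Ae_\xi$ using the L\'evy--Khintchine representation \eqref{sy}: extend $A$ to $C_b^\infty(\R^d)$ by the pointwise formula
$$
    Au(x)=\tfrac12\sum_{j,k}a_{jk}(x)\partial_{jk}u(x)+\langle b(x),\nabla u(x)\rangle-c(x)u(x)+\!\!\int\limits_{z\neq 0}\!\!\bigl(u(x+z)-u(x)-\langle\nabla u(x),z\rangle\I_{\{|z|\le 1\}}\bigr)\nu(x,dz).
$$
A direct substitution gives $Ae_\xi(x)=-p(x,\xi)e^{i\langle x,\xi\rangle}$, and the bound $\|p(\cdot,\xi)\|_\infty\le c(1+|\xi|^2)$ yields the uniform envelope $\sup_x|Ae_\xi(x)|\le c(1+|\xi|^2)$.

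I would then pass to the limit $n\to\infty$ in \eqref{planeq}. Pointwise convergence $Au_n(x)\to Ae_\xi(x)$ is immediate for the differential and $c(x)u_n$ terms (since $\chi_n\equiv 1$ near $x$ once $n>|x|+1$) and follows for the nonlocal term from $u_n\to e_\xi$ pointwise, $|u_n|\le 1$, and dominated convergence against $\nu(x,dz)$. The delicate point is the $n$- and $x$-uniform envelope $\sup_{n,x}|Au_n(x)|\le K(\xi)$: on $\{|z|\le 1\}$ one uses the second-order Taylor bound $\tfrac12\|\nabla^2u_n\|_\infty|z|^2\le C(\xi)|z|^2$, with $C(\xi)$ uniform in $n$ since the product rule combined with $\|\partial^\alpha\chi_n\|_\infty\le c_\alpha$ and $|\partial^\alpha e_\xi|\le|\xi|^{|\alpha|}$ controls derivatives of $u_n$; on $\{|z|>1\}$ the integrand is bounded by $2\|u_n\|_\infty\le 2$. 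Together with the uniform bound on the L\'evy characteristics $(c,b,a,\nu)$ implied by $\|p(\cdot,\xi)\|_\infty\le c(1+|\xi|^2)$ (cf.\ \cite[Lemma 2.1]{S3} or \cite[Lemma 6.2]{Sch}), these estimates produce the required envelope. Two applications of dominated convergence---once under the transition kernel $P(s,x,\cdot)$ to conclude $T_s(Au_n)(x)\to T_s(Ae_\xi)(x)$, and once in the $s$-integral on $[0,t]$---transfer the right-hand side of \eqref{planeq} to the limit. The left-hand side converges to $T_te_\xi(x)$ by the unique extension of $T_t$ to $B_b(\R^d)$ (see \cite[Section 3]{S1}) using $u_n\to e_\xi$ pointwise and $|u_n|\le 1$.

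The main obstacle is the $n$-uniform control of the nonlocal part of $Au_n(x)$, specifically that the truncated second moment $\int_{|z|\le 1}|z|^2\,\nu(x,dz)$ and the tail $\nu(x,\{|z|>1\})$ are bounded uniformly in $x$. This is precisely the analytic content of the hypothesis $\|p(\cdot,\xi)\|_\infty\le c(1+|\xi|^2)$ in \eqref{assumption}; without it the envelope would depend on $x$ and the dominated convergence argument in the $s$-integral would fail.
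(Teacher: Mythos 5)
Your argument is correct and follows the same core strategy as the paper's: truncate $e_\xi$ to test functions $u_n=\chi_n e_\xi\in C_c^\infty(\R^d)$, apply Dynkin's formula on $C_c^\infty(\R^d)$, identify the candidate limit $Ae_\xi(x)=-p(x,\xi)e_\xi(x)$ through the integro-differential representation of the generator, and pass to the limit by dominated convergence using the uniform moment bounds on the L\'{e}vy characteristics supplied by $\|p(\cdot,\xi)\|_\infty\le c(1+|\xi|^2)$. The one genuine difference is in how the limit is organized. The paper works with the stopped process $X_{t\wedge\tau_{B(x,r)}}$, for which local boundedness of $Lu_n$ on compacts already suffices for the dominated convergence step, and then invokes $p(\cdot,0)\equiv 0$ (hence conservativeness) to let $\tau_{B(x,r)}\to\infty$ and remove the stopping time. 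You instead remain at the unstopped semigroup level, which requires the stronger, globally uniform-in-$(n,y)$ envelope $|Au_n(y)|\le K(\xi)$ that you explicitly establish from the uniform bounds on $a$, $b$, $\nu$; this bypasses the extended-domain and stopping-time machinery and, incidentally, does not actually use the conservativeness half of \eqref{assumption} at all. Both routes hinge on the same analytic ingredient, and your version is a slightly more streamlined way to reach the identity.
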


\begin{proof}
Denote by $C_b^\infty(\R^d)$ the space of arbitrarily often differentiable functions such that the function and its derivatives are bounded. First we prove that $C_b^\infty(\R^d)$ is contained in the extended domain $\widetilde D(A)$ of the Feller generator $A$.

Let $(b(x),a(x),\nu(x,dz))_{x\in\R^d}$ be the L\'{e}vy
characteristics of the symbol $p(x,\xi)$ given by \eqref{sy}; under the assumption \eqref{assumption}, $c(x)\equiv 0$. Then $A$ has the following representation as an integro-differential operator:
\begin{equation}\label{sf1}\begin{aligned}
    Lf(x)
    &= \frac{1}{2}\sum_{j,k=1}^d a_{jk}(x)\partial_{jk}f(x)+\sum_{j=1}^d b_{j}(x)\partial_j f(x)\\
    &\qquad+\int\limits_{\mathclap{z\neq 0}}\!\!\Bigl(f(x+z)-f(x)-\langle\nabla f(x),z\rangle\I_{\{|z|\le1\}}\Bigr)\,\nu(x,dz).
\end{aligned}\end{equation}
For all $u\in C_c^\infty(\R^d)$ we have $-p(x,D)u(x)=Lu(x)$, $x\in\R^d$, cf.\ \cite[(2.7) and Corollary 2.4]{S3}, and by \cite[Lemma 2.3 and Corollary 3.6]{S3} we have $C_b^2(\R^d)\subset \widetilde{D}(A)$. On the other hand, \cite[Lemma 2.3 and Corollary 3.6]{S3} also show that $(L, D(L))$ is the unique extension of the Feller generator $A$ onto $C_b^2(\R^d)$ such that $\|Lu\|_\infty \le C\|u\|_{C_b^2}$ holds for all $u\in C_b^2(\R^d)$ and some constant $C>0$; here $\|u\|_{C_b^2}:=\sum_{|\alpha|\le 2}\|\partial^\alpha u\|_\infty$.

Let $\chi\in C_c^\infty(\R^d)$ be a smooth cut-off function such that $\I_{B(0,1)}(y)\le \chi(y)\le \I_{B(0,2)}(y)$ for $y\in\R^d$. For $u\in C_b^\infty(\R^d)$ we define $u_n^x(y):=\chi((y-x)/n)u(y)$. Then, $u_n^x\in C_c^\infty(\R^d)$ for every $n\ge1$. By the Taylor formula and the Leibniz rule we see that for any compact set $K\subset\R^d$ there exists a positive constant $C:=C(K,u,n)$ such that $|Lu_n^x(y)|\le C$ for all $y\in K$. Let $\tau_{_{{B(x,r)}}}$ be the first exit time of the process from the open ball ${B(x,r)}$. By the bounded convergence theorem and the fact that $C_c^\infty(\R^d)\subset\widetilde D(L)$, we find for all $x\in\R^d$ and $r,t>0$
\begin{align*}
    \Ee^x\biggl(u(X_{t\wedge \tau_{_{B(x,r)}}})-u(x)\biggr)&=\lim_{n\rightarrow\infty}\Ee^x\biggl(u^x_n(X_{t\wedge \tau_{_{B(x,r)}}})-u^x_n(x)\biggr)\\
    &= \lim_{n\rightarrow\infty}\Ee^x\biggl(\int_0^{t\wedge \tau_{_{B(x,r)}}}Lu^x_n(X_s)\,ds\biggr)\\
    &= \lim_{n\rightarrow\infty}\Ee^x\biggl(\int_{\big(0,\: t\wedge \tau_{_{B(x,r)}}\big)}Lu^x_n(X_{s})\,ds\biggr),
\end{align*}
By the dominated convergence theorem, we may interchange limit and integration to get
\begin{align*}
    \Ee^x\biggl(u(X_{t\wedge \tau_{_{B(x,r)}}})-u(x)\biggr)
    &= \Ee^x\biggl(\int_{\big(0,\: t\wedge \tau_{_{B(x,r)}}\big)}\lim_{n\rightarrow\infty}Lu^x_n(X_{s})\,ds\biggr)\\
    &=\Ee^x\biggl(\int_{\big(0,\: t\wedge \tau_{_{B(x,r)}}\big)}Lu(X_{s})\,ds\biggr).
\end{align*}
Therefore, for any $x\in\R^d$ and $r>0$,
$$
    \Ee^x\biggl(u(X_{t\wedge \tau_{_{B(x,r)}}})-\int_0^{t\wedge \tau_{_{B(x,r)}}}Lu(X_{s})\,ds\biggr)
    =u(x).
$$

Because of \eqref{assumption}, the process $(X_t)_{t\geq 0}$ is conservative. Therefore, $\tau_{_{B(x,r)}}\xrightarrow{r\to\infty} \infty$, and we find by dominated convergence for all $u\in C_b^\infty(\R^d)$ that
$$
    \Ee^x\biggl(u(X_t)-\int_0^tLu(X_{s})\,ds\biggr)=u(x).
$$ Note that $(L, D(L))$ is the unique extension of $(A,D(A))$, $C_b^\infty (\R^d)\subset D(L)$. Now the Markov property shows that $u\in\widetilde{D}(A)$.

If we set $u(x)=e_{\xi}(x)$ and use that $Le_{\xi}(x)=Ae_{\xi}(x)$, the assertion follows.
\end{proof}

If $C_c^\infty (\R^d) \subset {D} (A)$, the following result can be deduced from \cite[Lemmas 4.1 and Lemma 5.1]{S3}. Here we will present a simple proof of it by making use of the extended domain $\widetilde{D}(A)$ of the operator.

\begin{proposition}\label{appendix2}
    Let $(X_t)_{t\ge 0}$ be a Feller process with generator $(A,D(A))$ such that $C_c^\infty(\R^d)\subset D(A)$ and \eqref{assumption} holds. Let $\tau_{_{{B(x,r)}}}$ be the first exit time of the process from the open ball ${B(x,r)}$. Then, for any $x\in \R^d$ and $r, t>0$,
    \begin{align}\stepcounter{equation}
        \Pp^x(\tau_{_{B(x,r)}}\le t)
        &\le c \, t\sup_{|y-x|\le r} \; \sup_{|\xi|\le 1/r}|p(y,\xi)|\tag{\theequation a}\label{max1a}\\
        &\le c\, t\sup_{|\xi|\le 1/r} \; \sup_{z\in\R^d}|p(z,\xi)|\tag{\theequation b}\label{max1b}
    \end{align}
    with an absolute constant $c>0$.
\end{proposition}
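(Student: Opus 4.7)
The plan is to apply Dynkin's formula to a suitably rescaled bump function and then use the pseudo differential representation of $A$ together with a growth estimate for the continuous negative definite function $\xi\mapsto p(y,\xi)$.

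Fix once and for all a function $\chi\in C_c^\infty(\R^d)$ with $0\le \chi\le 1$, $\chi(0)=1$ and $\supp\chi\subset B(0,1)$, and for $x\in\R^d$, $r>0$ set
$$
    u_{x,r}(y) := \chi\bigl((y-x)/r\bigr).
$$
Then $u_{x,r}\in C_c^\infty(\R^d)\subset D(A)\subset \widetilde D(A)$, $u_{x,r}(x)=1$, and $u_{x,r}(z)=0$ whenever $|z-x|\ge r$. Since $(M^{u_{x,r}}_t)_{t\ge 0}$ is a local martingale and $\tau:=\tau_{B(x,r)}$ is a stopping time, optional stopping (after a standard localization argument which is available because $u_{x,r}$ and $Au_{x,r}$ are bounded) gives
$$
    \Ee^x\bigl[u_{x,r}(X_{t\wedge\tau})\bigr]-1
    =\Ee^x\int_0^{t\wedge\tau} Au_{x,r}(X_s)\,ds.
$$
On $\{\tau\le t\}$ the càdlàg process has $|X_\tau-x|\ge r$, so $u_{x,r}(X_\tau)=0$; hence the left-hand side is bounded above by $\Pp^x(\tau>t)-1=-\Pp^x(\tau\le t)$. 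Rearranging,
$$
    \Pp^x(\tau_{B(x,r)}\le t)
    \le t\cdot\sup_{y\in B(x,r)}\bigl|Au_{x,r}(y)\bigr|.
$$

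The heart of the proof is the bound on $|Au_{x,r}(y)|$ for $y\in B(x,r)$. Since $\widehat{u_{x,r}}(\xi)=r^d e^{-i\langle x,\xi\rangle}\hat\chi(r\xi)$, the pseudo differential formula \eqref{symbol} and the substitution $\eta=r\xi$ give
$$
    Au_{x,r}(y)
    =-\int e^{i\langle y-x,\eta/r\rangle}\,p(y,\eta/r)\,\hat\chi(\eta)\,d\eta.
$$
Here is the key step, and the main obstacle: the integral runs over all $\eta\in\R^d$, but we want a bound involving only $\sup_{|\xi|\le 1/r}|p(y,\xi)|$. For this I would invoke the classical growth estimate for continuous negative definite functions (a consequence of the subadditivity of $\xi\mapsto\sqrt{|p(y,\xi)|}$, see e.g.\ Jacob \cite{jac-book}): there exists a universal constant $\kappa$ such that
$$
    \bigl|p(y,\eta/r)\bigr|
    \le \kappa(1+|\eta|^2)\sup_{|\xi|\le 1/r}\bigl|p(y,\xi)\bigr|
    \qquad\text{for all\ \ }\eta\in\R^d.
$$
Combined with the fact that $\hat\chi$ is a Schwartz function, so that $C_\chi:=\kappa\int(1+|\eta|^2)|\hat\chi(\eta)|\,d\eta<\infty$, this yields
$$
    \bigl|Au_{x,r}(y)\bigr|
    \le C_\chi \sup_{|\xi|\le 1/r}\bigl|p(y,\xi)\bigr|
    \qquad\text{for all\ \ }y\in\R^d.
$$

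Inserting this into the Dynkin bound gives \eqref{max1a}: $\Pp^x(\tau_{B(x,r)}\le t)\le cC_\chi\,t\sup_{|y-x|\le r}\sup_{|\xi|\le 1/r}|p(y,\xi)|$ with an absolute constant $c$. The second inequality \eqref{max1b} is then immediate, since enlarging the sup in $y$ from $B(x,r)$ to the whole space only makes the right-hand side bigger and lets us interchange the two suprema. The non-trivial point is exactly the CND growth estimate above; everything else is bookkeeping and the general Dynkin/optional stopping machinery already ensured by $C_c^\infty(\R^d)\subset\widetilde D(A)$ (Proposition \ref{appendix1}) together with conservativeness.
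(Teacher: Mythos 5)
Your proof is correct and takes essentially the same route as the paper: Dynkin's formula for a rescaled cut-off function, conversion to a Fourier integral via the pseudo differential representation, and the subadditivity-based growth estimate $|p(y,\eta/r)|\le 2(1+|\eta|^2)\sup_{|\xi|\le 1/r}|p(y,\xi)|$, which is exactly what the paper invokes via \cite[Lemma 2.3]{S1}. Both proofs then close with $\int(1+|\eta|^2)|\hat\chi(\eta)|\,d\eta<\infty$ and the trivial passage from the local sup over $B(x,r)$ to the global one.
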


\begin{proof}
Pick $u\in C_c^\infty(\R^d)$ such that $\supp u\subset B(0,1)$, $u(0)=1$ and $0\le u\le 1$. For $x\in\R^d$ and $r>0$, set $u^x_r(\cdot):=u((\cdot-x)/r)$. Clearly, $u^x_r\in  C_c^\infty(\R^d)$. By \eqref{assumption} and since $C_c^\infty(\R^d)\subset {D}(A) \subset \widetilde{D}(A)$, $(M_t,\mathscr F_t)_{t\geq 0}$ is a martingale under $\Pp^x$, where
$$
    M_t:=1-u^x_r(X_{t\wedge \tau_{_{B(x,r)}}})+\int_0^{{t\wedge \tau_{B(x,r)}}}(-p(X_s,D))\,u_r^x(X_s)\,ds,
$$
and $\mathscr F_t = \sigma(X_s\::\: s\leq t)$ is the canonical filtration for $(X_t)_{t\geq 0}$.
Therefore,
$$
    \Ee^x\Bigl(1-u^x_r(X_{t\wedge \tau_{_{B(x,r)}}})\Bigr)
    =\Ee^x\biggl(\int_0^{{t\wedge \tau_{B(x,r)}}}p(X_s,D)\,u_r^x(X_s)\,ds\biggr)
$$
where $p(X_s,D)\,u_r^x(X_s)$ is short for $p(y,D_y)\,u_r^x(y)\Big|_{y=X_s}$. Now
\begin{equation}\label{proof1}\begin{aligned}
    \Pp^x&(\tau_{_{B(x,r)}}\le t)\\
    &\le \Ee^x\Bigl(1-u^x_r\bigl(X_{t\wedge \tau_{_{B(x,r)}}}\bigr)\Bigr)\\
    &=\Ee^x\biggl(\int_{\big(0,\: t\wedge \tau_{_{B(x,r)}}\big)}p(X_{s},D)\,u_r^x(X_{s})\,ds\biggr)\\
    &=\Ee^x\biggl(\int_{\big(0,\: t\wedge \tau_{_{B(x,r)}}\big)}\I_{\{|X_{s}-x|<r\}}\,p(X_{s},D)\,u_r^x(X_{s})\,ds\biggr)\\
    &=\Ee^x\biggl(\int_{\big(0,\: t\wedge \tau_{_{B(x,r)}}\big)}\biggl[\I_{\{|y-x|<r\}}\int e^{i\langle y,\xi\rangle}\,p(y,\xi)\,\hat{u}_r^x(\xi)\,d\xi\biggr]\bigg|_{y=X_{s}}\,ds\biggr)\\
    &\le \Ee^x\biggl(\int_{\big(0,\: t\wedge \tau_{_{B(x,r)}}\big)} \sup_{|z-x|<r}\int|p(z,\xi)||\hat{u}_r^x(\xi)|\,d\xi\,ds\biggr)\\
    &= \Ee^x\big({t\wedge \tau_{ B(x,r)}}\big)\sup_{|z-x|<r}\int|p(z,\xi)||\hat{u}_r^x(\xi)|\,d\xi\\
    &\le t \int\sup_{|z-x|<r}|p(z,\xi)|\,r^d\,|\hat{u}(r\xi)|\,d\xi\\
    &=t\int \sup_{|y-x|<r}|p(y,\xi/r)| |\hat{u}(\xi)|\,d\xi.
\end{aligned}\end{equation}
Thus we can use \cite[Lemma 2.3]{S1} to obtain
\begin{align*}
    \int \sup_{|y-x|\le r}|p(y,\xi/r)||\hat{u}(\xi)|\,d\xi
    &\le 2\sup_{|y-x|\le r}\;\sup_{|\eta|\le 1/r}|p(y,\eta)|\int(1+|\xi|^2)|\hat{u}(\xi)|\,d\xi\\
    &=: c_u\sup_{|y-x|\le  r}\sup_{|\eta|\le 1/r}|p(y,\eta)|,
\end{align*}
where $c_u=\int(1+|\xi|^2)|\hat{u}(\xi)|\,d\xi$. Combining all inequalities we get
\begin{equation*}\label{nonexp}
    \Pp^x(\tau_{_{B(x,r)}}\le t)
    \le c_u \,t\sup_{|y-x|\le r}\;\sup_{|\xi|\le 1/r}|p(y,\xi)|,
\end{equation*}
and the assertion follows.
\end{proof}

\medskip

Let $(X_t)_{t\ge0}$ be a strong Markov process with semigroup $(T_t)_{t\ge0}$ and generator $A$.  Assume that the semigroup $(T_t)_{t\ge 0}$ has the $C_b$-Feller property, i.e.\ $T_t(C_b(\R^d))\subset C_b(\R^d)$ for all $t>0$, where $C_b(\R^d)$ is the set of bounded and continuous functions on $\R^d$. Moreover, we assume that $C_c^\infty(\R^d)$ is contained in the extended domain $\widetilde D(A)$ of the operator $A$ and that $A|_{C_c^\infty(\R^d)} =-p(\cdot,D)$ where $-p(\cdot,D)$ is a pseudo differential operator with symbol $p(x,\xi)$. Then, for any $u\in C_c^\infty(\R^d)$, $$
    \biggl(u(X_t)-\int_0^t(-p(X_s,D)u(X_s))\,ds,\,\,\mathscr{F}_t\biggr)_{t\geq 0}
    \quad\text{is a local martingale under\ \ } \Pp^x.
$$
Furthermore, we have the following simple condition on the symbol $p(x,\xi)$ to yield the ($C_\infty$-)Feller property of $(T_t)_{t\ge0}$.

\begin{proposition}\label{appendix3}
If the symbol $p(x,\xi)$ satisfies
\begin{equation}\label{fess}
    \varlimsup_{r\rightarrow\infty}\sup_{|x|\le r}\sup_{|\xi|\le 1/r}|p(x,\xi)|=0,
\end{equation}
then $(T_t)_{t\ge 0}$ has the Feller property, i.e.\ $T_t(C_\infty(\R^d))\subset C_\infty(\R^d)$ for every $t\ge0$ where $C_\infty(\R^d)$ is the set of continuous functions on $\R^d$ vanishing at infinity.
\end{proposition}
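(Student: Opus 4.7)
The plan is as follows. Since the $C_b$-Feller hypothesis already gives $T_tu\in C_b(\R^d)$ whenever $u\in C_\infty(\R^d)\subset C_b(\R^d)$, the only thing left to verify is that $T_tu$ vanishes at infinity. As $T_t$ is a contraction on $B_b(\R^d)$ and $C_c(\R^d)$ is uniformly dense in $C_\infty(\R^d)$, it suffices to establish this for $u\in C_c(\R^d)$: if each $T_tu_n\in C_\infty(\R^d)$ and $u_n\to u$ uniformly, then $T_tu_n\to T_tu$ uniformly, and the uniform limit lies in $C_\infty(\R^d)$.

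Fix such a $u$ with $\supp u\subset\overline{B(0,M)}$. For $|x|>2M$, set $r:=|x|-M>M$; then $B(x,r)\cap\supp u=\emptyset$, so $\{X_t\in\supp u\}\subset\{\tau_{B(x,r)}\le t\}$ and therefore
$$
|T_tu(x)|=\bigl|\Ee^x u(X_t)\bigr|\le \|u\|_\infty\,\Pp^x\bigl(\tau_{B(x,r)}\le t\bigr).
$$
The proof of Proposition \ref{appendix2} only exploits the local-martingale identity for $u\in C_c^\infty(\R^d)$, which is guaranteed here because $C_c^\infty(\R^d)\subset \widetilde D(A)$ and $A|_{C_c^\infty(\R^d)}=-p(\cdot,D)$. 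Hence the exit-time estimate
$$
\Pp^x\bigl(\tau_{B(x,r)}\le t\bigr)\le c\,t\,\sup_{|y-x|\le r}\,\sup_{|\xi|\le 1/r}|p(y,\xi)|
$$
carries over verbatim to the present setting (no assumption beyond $C_c^\infty(\R^d)\subset\widetilde D(A)$ is used).

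Letting $|x|\to\infty$, we have $r=|x|-M\to\infty$, and it remains to show that the double supremum on the right-hand side vanishes in the limit by virtue of \eqref{fess}. This is the most delicate step: Proposition \ref{appendix2} supplies a supremum over the ball $\{|y-x|\le r\}$ (which, for $|x|$ large, is centred far from the origin), whereas \eqref{fess} furnishes smallness only on the cone $\{|y|\le R,\,|\eta|\le 1/R\}$. I plan to control the supremum by splitting the $\xi$-range at the threshold $1/(|x|+r)$: on the low-frequency part $|\xi|\le 1/(|x|+r)$, the inclusion $\{|y-x|\le r\}\subset\{|y|\le |x|+r\}$ places us in the regime where \eqref{fess} (applied with radius $R=|x|+r$) forces $|p(y,\xi)|<\varepsilon$; on the complementary band $1/(|x|+r)<|\xi|\le 1/r$ one combines the a priori growth bound $|p(y,\xi)|\le c(1+|\xi|^2)$ from \eqref{assumption} with the fact that this band shrinks as $r/|x|\to 0$, which suggests replacing the crude choice $r=|x|-M$ by a slowly growing $r=r(|x|)$ (with $r\to\infty$ and $r/|x|\to 0$).

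Calibrating this splitting so that both pieces vanish simultaneously as $|x|\to\infty$ is the main obstacle I anticipate; once it is handled, $|T_tu(x)|\to 0$ as $|x|\to\infty$ follows from the previous display, $T_tu\in C_\infty(\R^d)$ for every $u\in C_c(\R^d)$ follows by the argument above, and the Feller property of $(T_t)_{t\ge 0}$ then extends to all of $C_\infty(\R^d)$ by the approximation in the first paragraph.
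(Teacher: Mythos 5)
Your reduction and setup follow the paper: approximate $u\in C_\infty(\R^d)$ by $C_c(\R^d)$ functions, bound $|T_t u(x)|\le\|u\|_\infty\,\Pp^x(\tau_{B(x,r)}\le t)$, invoke the exit-time estimate \eqref{max1a} of Proposition~\ref{appendix2} (which, as you correctly note, only needs $C_c^\infty(\R^d)\subset\widetilde D(A)$), and then study whether $\sup_{|y-x|\le r}\sup_{|\xi|\le 1/r}|p(y,\xi)|\to 0$. However, you stall at exactly the step the paper dispatches in its first sentence, and the workaround you sketch — which you leave explicitly unfinished — would not close the gap.

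The observation you are missing is that, since $\sqrt{|p(x,\cdot)|}$ is subadditive in $\xi$ for each fixed $x$, condition \eqref{fess} is equivalent to the dilated version
\begin{equation*}
    \varlimsup_{r\rightarrow\infty}\ \sup_{|x|\le\gamma r}\ \sup_{|\xi|\le 1/r}|p(x,\xi)|=0 \qquad\text{for every\ \ }\gamma\ge 1.
\end{equation*}
Indeed, writing $s=\gamma r$ and $\xi=n\eta$ with $n=\lceil\gamma\rceil$, subadditivity gives $|p(x,\xi)|\le n^2\sup_{|\eta|\le 1/s}|p(x,\eta)|$ whenever $|\xi|\le\gamma/s$; thus enlarging the $x$-radius by a fixed factor relative to $1/(\xi\text{-radius})$ only costs a constant. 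With this, your choice $r=|x|-M$ already works with no further device: $\{|y-x|\le r\}\subset\{|y|\le 2|x|-M\}$ and $(2|x|-M)/(|x|-M)\le 3$ for $|x|\ge 2M$, so
\begin{equation*}
    \sup_{|y-x|\le r}\ \sup_{|\xi|\le 1/r}|p(y,\xi)|\ \le\ \sup_{|y|\le 3r}\ \sup_{|\xi|\le 1/r}|p(y,\xi)|\ \longrightarrow\ 0\quad(r\to\infty)
\end{equation*}
by the dilated condition with $\gamma=3$. (The paper takes the slightly cleaner radius $r=|y|/2$, which gives the same $\gamma=3$.) No frequency splitting is needed.

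Moreover, the splitting you propose does not work as described. On the band $1/(|x|+r)<|\xi|\le 1/r$ the a-priori bound $|p(y,\xi)|\le c(1+|\xi|^2)$ from \eqref{assumption} gives only $c(1+r^{-2})$, which stays bounded away from zero; that the band is ``shrinking'' is irrelevant because you are taking a supremum over it, not integrating. Worse, replacing $r=|x|-M$ by a slowly growing $r(|x|)$ with $r/|x|\to 0$ makes the ratio of the $y$-radius ($\asymp|x|$) to $1/(\xi\text{-radius})$ ($=r$) diverge, which is precisely the regime in which the dilated condition yields no control. So there is a genuine gap in the proposal as written, and the calibration you anticipate would not fill it; the missing ingredient is the subadditivity argument above.
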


\begin{proof}
Since $\sqrt{|p(x,\cdot)|}$ is, for any fixed $x\in\R^d$, subadditive it is not hard to see that \eqref{fess} is equivalent to \begin{equation}\label{fes}
    \varlimsup_{r\rightarrow\infty}\sup_{|x|\le\gamma r}\sup_{|\xi|\le 1/r}|p(x,\xi)|=0,\qquad\gamma\ge 1.
\end{equation}

A close inspection of the proof of Proposition \ref{appendix2} shows that \eqref{max1a} also holds in the present setting. For every $f\in C_\infty(\R^d)$ we see by the $C_b$-Feller property that $T_tf\in C_b(\R^d)$ is continuous. We have to study the behaviour of $T_tf(x)$ as $|x|\to\infty$. If $f\in C_\infty(\R^d)$, we find for every $\varepsilon>0$ some $r_1:=r_1(\varepsilon,f)>0$ such that
$$
    |f(y)|\le \varepsilon/2\quad\text{for all\ \ } |y|\ge r_1.
$$
Because of \eqref{fes}, there is some constant $r_2:=r_2(\varepsilon,f)>r_1 > 0$ such that
$$
    \sup_{|z|\le 3|y|/2}\sup_{|\xi|\le 2/|y|}|p(z,\xi)|
    \le
    \frac{\varepsilon}{2c\,t(\|f\|_\infty+1)}
    \quad\text{for all\ \ } |y|\ge r_2
$$
($c$ is the constant appearing in Proposition \ref{appendix2}). By \eqref{max1a} we find for $y\in\R^d$ with $|y|\ge 2r_2$
\begin{align*}
    |(T_tf)(y)|
    &\le \int |f(z)|\,\Pp^y(X_t\in dz)\\
    &=\int_{B(0,r_2)} |f(z)|\,\Pp^y(X_t\in dz)+\int_{B^c(0,r_2)} |f(z)|\,\Pp^y(X_t\in dz)\\
    &\le \|f\|_\infty \,\Pp^y(|X_t|\le r_2 )+\varepsilon/2\\
    &\le  \|f\|_\infty \,\Pp^y(|X_t-y|\ge |y|- r_2 )+\varepsilon/2\\
    &\le  \|f\|_\infty \,\Pp^y(\sup_{s\le t}|X_s-y|\ge |y|/2)+\varepsilon/2\\
    &\le  c\,t\,\|f\|_\infty \sup_{|z-y|\le |y|/2} \; \sup_{|\xi|\le 2/|y|}|p(z,\xi)|+\varepsilon/2\\
    &\le  c\,t\,\|f\|_\infty \sup_{|z|\le 3|y|/2}\;\sup_{|\xi|\le 2/|y|}|p(z,\xi)|+\varepsilon/2\\
    &\le \varepsilon,
\end{align*}
which shows that $\lim_{|y|\to\infty} T_t f(y) = 0$ for all $f\in C_\infty(\R^d)$.
\end{proof}

The following statement presents a general connection between a $C_b$-Feller and a ($C_\infty$-)Feller semigroup.

\begin{proposition}\label{appendix5} \noindent
\textup{\bfseries (i)} Suppose that $(T_t)_{t\ge 0}$ is a Feller semigroup. If $T_t1\in C_b(\R^d)$ for every fixed $t\ge 0$, then $(T_t)_{t\ge 0}$ is a $C_b$-Feller semigroup. In particular, any conservative Feller semigroup, i.e.\ for $t\ge0$, $T_t1=1$, is a $C_b$-Feller semigroup.

\smallskip\noindent
\textup{\bfseries (ii)}
Let $(T_t)_{t\ge 0}$ be a $C_b$-Feller semigroup and $(P(t,x,dy))_{t>0}$ the corresponding family of kernels, i.e.\ for any $t>0$, $x\in\R^d$ and $u\in C_b(\R^d)$, $T_tu(x)=\int u(y) \, P(t,x,dy)$. Then, $(T_t)_{t>0}$ is a Feller semigroup if, and only if, for all $t>0$ and all bounded sets $B\in \mathscr{B}(\R^d)$,
$$
    \lim_{|x|\rightarrow\infty}P(t,x,B)=0.
$$
\end{proposition}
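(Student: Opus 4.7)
\textbf{Part (i).} The plan is to approximate an arbitrary $u\in C_b(\R^d)$ by compactly supported functions, exploit the Feller property on those, and then pass to the limit using Dini's theorem. Concretely, fix a sequence of cut-offs $\chi_n\in C_c(\R^d)$ with $0\le\chi_n\le 1$ and $\chi_n\uparrow 1$ pointwise on $\R^d$. For $u\in C_b(\R^d)$ we have $u\chi_n\in C_c(\R^d)\subset C_\infty(\R^d)$, so the Feller property gives $T_t(u\chi_n)\in C_\infty(\R^d)\subset C_b(\R^d)$. The difference satisfies
$$
    |T_tu(x)-T_t(u\chi_n)(x)|
    \le \|u\|_\infty\, T_t(1-\chi_n)(x)
    = \|u\|_\infty\bigl(T_t1(x)-T_t\chi_n(x)\bigr),
$$
where we use the (unique) extension of the Feller semigroup to $B_b(\R^d)$ and the sub-Markov property.

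By hypothesis $T_t 1\in C_b(\R^d)$; since $\chi_n\uparrow 1$, monotone convergence gives $T_t\chi_n\uparrow T_t1$ pointwise. Because both $T_t\chi_n$ and $T_t1$ are continuous and the convergence is monotone, Dini's theorem upgrades this to uniform convergence on every compact set. Consequently $T_t(u\chi_n)\to T_tu$ uniformly on compacta, so $T_tu$ is continuous on $\R^d$; boundedness is immediate from $|T_tu|\le\|u\|_\infty T_t1\le\|u\|_\infty$. This establishes the $C_b$-Feller property. The conservative case is a trivial special case, since $T_t1\equiv 1\in C_b(\R^d)$.

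\textbf{Part (ii).} For the direction ``Feller $\Rightarrow$ vanishing of $P(t,x,B)$'': given a bounded Borel set $B$, pick $R>0$ with $B\subset B(0,R)$ and a function $v\in C_c(\R^d)$ with $0\le v\le 1$ and $v\equiv 1$ on $\overline{B(0,R)}$. Then $\I_B\le v$, so $P(t,x,B)=T_t\I_B(x)\le T_tv(x)$, and since $v\in C_c(\R^d)\subset C_\infty(\R^d)$ the Feller property yields $T_tv\in C_\infty(\R^d)$, whence $P(t,x,B)\to 0$ as $|x|\to\infty$.

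For the converse, take $u\in C_\infty(\R^d)$. The $C_b$-Feller property already gives $T_tu\in C_b(\R^d)$, so only the decay at infinity remains. Given $\varepsilon>0$, choose $R>0$ with $|u(y)|\le\varepsilon/2$ for $|y|\ge R$ and set $B=\overline{B(0,R)}$. Splitting the integral,
$$
    |T_tu(x)|
    \le \int_B|u(y)|\,P(t,x,dy)+\int_{B^c}|u(y)|\,P(t,x,dy)
    \le \|u\|_\infty\,P(t,x,B)+\frac{\varepsilon}{2}.
$$
The hypothesis $\lim_{|x|\to\infty}P(t,x,B)=0$ then yields $|T_tu(x)|\le\varepsilon$ for $|x|$ sufficiently large, proving $T_tu\in C_\infty(\R^d)$.

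Both parts are straightforward once the approximation scheme is set up; the only mildly delicate point is the application of Dini's theorem in (i), which is why the extra assumption $T_t1\in C_b(\R^d)$ is needed (rather than just $T_t1\le 1$) to ensure that the monotone limit function is already known to be continuous.
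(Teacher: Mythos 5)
Your proof is correct. For part (ii) you argue exactly as the paper does: the forward direction uses a $C_c$ bump $v$ dominating $\I_B$, the converse splits $T_tu$ over $B(0,R)$ and its complement and invokes the hypothesis on $P(t,x,B(0,R))$. For part (i), however, the paper simply cites \cite[Corollary 3.4]{S1} without reproducing a proof, whereas you supply a self-contained argument: truncate $u$ by cut-offs $\chi_n\uparrow 1$, note $T_t(u\chi_n)\in C_\infty(\R^d)$, control the remainder by $\|u\|_\infty\,(T_t 1 - T_t\chi_n)$, and then upgrade the monotone pointwise convergence $T_t\chi_n\uparrow T_t1$ to locally uniform convergence via Dini. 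This correctly isolates precisely where the hypothesis $T_t1\in C_b(\R^d)$ is used (Dini needs the limit function to be continuous), so your version is a sound, elementary alternative to the cited result and makes the appendix self-contained. No gaps.
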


\begin{proof}\noindent
\textup{\bfseries (i)} This is just \cite[Corollary 3.4]{S1}.

\smallskip\noindent
\textup{\bfseries (ii)}
Assume that $(T_t)_{t\ge0}$ is $C_b$-Feller. Then, for any $t>0$ and $f\in C_\infty(\R^d)$, $T_tf$ is continuous. For any $\varepsilon>0$, we first choose $\delta>0$ such that $|f|\I_{B(0,\delta)^c}\le \varepsilon$. Thus, for $x\in\R^d$,
\begin{align*}
    |T_tf(x)|
    &\le \int_{B(0,\delta)}|f(y)|\,P(t,x,dy)+\int_{B(0,\delta)^c}|f(y)|\,P(t,x,dy)\\
    &\le \|f\|_\infty P(t,x, B(0,\delta))+\varepsilon.
\end{align*}
Hence,
$$
    \lim_{|x|\rightarrow\infty}|T_tf(x)|
    \le\|f\|_\infty \lim_{|x|\rightarrow\infty}P(t,x, B(0,\delta))+\varepsilon=\varepsilon.
$$
Letting $\varepsilon\rightarrow0$ yields that $T_tf\in C_\infty (\R^d)$.

On the other hand, for any bounded set $B\in \mathscr{B}(\R^d)$, we can choose some $f\in C_\infty(\R^d)$ such that $f\ge0$ and $f|_B\equiv 1$. Therefore,
$$
    T_tf(x)
    \ge \int_B f(y)\,P(t,x,dy)
    =P(t,x,B).
$$
Since $(T_t)_{t\ge 0}$ is $(C_\infty$-)Feller,
\begin{gather*}
    0
    =\lim_{|x|\rightarrow\infty}|T_tf(x)|
    =\lim_{|x|\rightarrow\infty}T_tf(x)
    \ge\lim_{|x|\rightarrow\infty}P(t,x,B).
    \qedhere
\end{gather*}
\end{proof}

We close this section with an abstract result for Feller semigroups.
\begin{proposition}\label{appendix4}
    The martingale problem for $(-p(\cdot,D), C_c^\infty(\R^d))$ is well posed if, and only if, the test functions $C_c^\infty(\R^d)$ are an operator core for the Feller generator $(A,D(A))$, i.e.\ $\overline{A|_{C_c^\infty(\R^d)}}=A$.
\end{proposition}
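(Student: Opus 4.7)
The strategy is to reduce this equivalence to a classical theorem of Ethier and Kurtz, namely \cite[Chapter 4, Theorem 4.1]{EK}, which asserts that for a Feller generator $(A,D(A))$ and a subset $D_0 \subset D(A)$ which is dense in $C_\infty(\R^d)$, $D_0$ is an operator core for $A$ if, and only if, for every initial point $x \in\R^d$, the martingale problem for $(A|_{D_0}, D_0)$ has a unique solution. Since $C_c^\infty(\R^d)$ is dense in $C_\infty(\R^d)$ and, by hypothesis, $A|_{C_c^\infty(\R^d)} = -p(\cdot, D)$, the proposition follows at once. For the reader's convenience I would sketch both implications.

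For ``core implies well-posed'', existence of a solution is immediate since the Feller process $((X_t)_{t\ge 0}, \Pp^x)$ itself solves the martingale problem for $(A, D(A))$ and hence also for its restriction to $C_c^\infty(\R^d)$. For uniqueness, let $((Y_t)_{t\ge 0}, \mathsf{Q}^x)$ be any solution. Applying optional stopping to the local martingale $M_t^u$ for $u \in C_c^\infty(\R^d)$ and taking the Laplace transform in $t$, one checks that the $\lambda$-potential $U_\lambda^{\mathsf{Q}} f(x) := \int_0^\infty e^{-\lambda t}\, \mathsf{E}^{\mathsf{Q}, x}(f(Y_t))\, dt$ satisfies $(\lambda + p(\cdot, D)) U_\lambda^{\mathsf{Q}} f = f$ for every $f \in C_c^\infty(\R^d)$ and every $\lambda>0$. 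Since $C_c^\infty(\R^d)$ is a core, this identity extends by graph-norm closure to all of $D(A)$, so $U_\lambda^{\mathsf{Q}} = (\lambda - A)^{-1}$, the Feller resolvent. Uniqueness of Laplace transforms then yields uniqueness of the one-dimensional distributions, and the Markov property—which itself follows from well-posedness via the restart argument of \cite[Chapter 4]{EK}—propagates this equality to all finite-dimensional distributions.

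For ``well-posed implies core'', set $A_0 := \overline{A|_{C_c^\infty(\R^d)}}$, so that $A_0 \subset A$; it remains to show $A_0 = A$. Arguing by contradiction, suppose $A_0 \subsetneq A$. By the Hille-Yosida theorem, $\mathrm{Ran}(\lambda - A_0)$ is then a proper closed subspace of $C_\infty(\R^d)$ for some $\lambda > 0$; a Hahn-Banach argument combined with a Phillips-type perturbation produces a second strongly continuous Feller semigroup whose generator extends $-p(\cdot, D)|_{C_c^\infty(\R^d)}$ but differs from $A$. The Feller process associated with this second semigroup then solves the martingale problem for $(-p(\cdot, D), C_c^\infty(\R^d))$ with finite-dimensional distributions different from those of $(X_t)_{t\ge 0}$, contradicting well-posedness.

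The main technical obstacle is precisely this second direction: one has to rule out \emph{any} strict extension of $A_0$ inside $A$ without circular reasoning, and constructing the ``competing'' Feller extension rigorously requires a delicate Hille-Yosida type argument. This is carried out in full generality in \cite[Chapter 4, Proposition 4.1 and Theorem 4.1]{EK}, which is what I would ultimately cite for the proof.
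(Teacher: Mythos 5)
Your reduction of the ``core implies well-posed'' direction to the Hille--Yosida--Ray theorem, the positive maximum principle, and the dissipativity estimate, followed by an appeal to \cite[Chapter 4, Theorem 4.1]{EK}, is essentially the paper's argument for that implication. The issue is with the converse.

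You describe \cite[Chapter 4, Theorem 4.1]{EK} as an \emph{equivalence} (``$D_0$ is a core \emph{if and only if} the martingale problem is well posed''), but that theorem is one-directional: it says that if $A|_{D_0}$ is dissipative and the range $\mathcal{R}(\lambda - A|_{D_0})$ is dense, then solutions of the martingale problem are unique. It does \emph{not} assert that well-posedness forces the range to be dense. For that converse you sketch a contradiction argument: if $A_0 := \overline{A|_{C_c^\infty(\R^d)}} \subsetneq A$, use Hahn--Banach plus a ``Phillips-type perturbation'' to manufacture a second Feller semigroup whose generator extends $-p(\cdot,D)$ on $C_c^\infty(\R^d)$ but differs from $A$. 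This is the genuine gap. A Hahn--Banach extension of a dissipative operator need not satisfy the positive maximum principle, and a Phillips-type bounded perturbation of $A_0$ need not be sub-Markovian either; so there is no guarantee that the competing operator actually generates a Feller (or even positivity-preserving) semigroup, and without that you cannot produce a second solution of the martingale problem. Ruling out every proper Feller-generating extension of $A_0$ inside $A$ is precisely the hard step, and it is not in \cite[Chapter 4, Proposition 4.1 and Theorem 4.1]{EK}.

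The paper handles this direction by invoking a theorem of van Casteren \cite[Theorem 2.5]{vancas1} (see also Okitaloshima and van Casteren \cite[Theorem 3.1]{vancas2}), which is tailored exactly to operators satisfying the positive maximum principle: well-posedness of the martingale problem implies that $(-p(\cdot,D), C_c^\infty(\R^d))$ has a \emph{unique} extension to a Feller generator, and this unique extension is the closure $\overline{A|_{C_c^\infty(\R^d)}}$; hence $\overline{A|_{C_c^\infty(\R^d)}} = A$. If you want to keep your contradiction strategy, you would need to cite (or reprove) a result of this strength rather than the Hahn--Banach/Phillips heuristic; as written, the ``well-posed implies core'' half of your proof does not go through.
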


\begin{proof}
Assume that the martingale problem for $(-p(\cdot,D), C_c^\infty(\R^d))$ is well posed. According to a result by van Casteren, \cite[Theorem 2.5, Page 283]{vancas1}, see also that by Okitaloshima and van Casteren, \cite[Theorem 3.1, Page 789]{vancas2}, there exists a unique extension $(A,D(A))$ of $(-p(\cdot,D), C_c^\infty(\R^d))$ which is a Feller generator. In particular, $\overline{A|_{C_c^\infty(\R^d)}}=A$.

On the other hand, suppose that the test functions $C_c^\infty(\R^d)$ are an operator core for the Feller operator $(A,D(A))$. By the Hille-Yosida-Ray Theorem, see e.g.\ \cite[Chapter 4, Theorem 2.2, Page 165]{EK}, the range $(\lambda+p(\cdot,D))(C_c^\infty(\R^d))$ is dense in $C_\infty(\R^d)$ for some $\lambda>0$. Since $(-p(\cdot,D), C_c^\infty(\R^d))$ satisfies the positive maximum principle, it is dissipative in the sense that
$$
    \|\lambda u- (-p(\cdot,D))u\|_\infty
    \ge \lambda\|u\|_\infty
    \qquad\text{for all\ \ } u\in C_c^\infty(\R^d),
$$
cf.\ \cite[Chapter 4, Theorem 2.1, Page 165]{EK}.  Therefore, the well-posedness of the martingale problem for $(-p(\cdot,D), C_c^\infty(\R^d))$  follows from \cite[Chapter 4, Theorem 4.1, Page 182]{EK}.
 \end{proof}
\begin{ack}
Financial support through DFG (grant Schi 419/5-2) and DAAD (PPP Kroatien) (for R.L.\ Schilling),
the Alexander-von-Humboldt Foundation and the Programme of Excellent Young Talents in Universities
of Fujian (No.\ JA10058 and JA11051) (for Jian Wang)
is gratefully
acknowledged. Most of this work was done when Jian Wang was a Humboldt fellow at TU Dresden. He is grateful for the hospitality and the good working conditions.
\end{ack}

\end{document}